\newcommand{\univ}{\mathcal{U}}
\newcommand{\gggg}{\mathfrak{g}_\infty}
\newcommand{\mnfld}{\mathcal{M}}
\newcommand{\Z}{\mathbb{Z}}
\newcommand{\R}{\mathbb{R}}
\newcommand{\lctvs}{LCTVS}
\newcommand{\HHH}{\mathbb{H}}
\newcommand{\SL}{\operatorname{SL}}
\newcommand{\Ad}{\operatorname{Ad}}
\newcommand{\Egam}{\mathbf{\Gamma}}
\newcommand{\chars}{X}
\newcommand{\charsA}{\tilde\chars}
\newcommand{\dom}{\operatorname{ld}}
\newcommand{\m}{\operatorname{m}}
\newcommand{\quas}[1]{{#1}^{\sharp}}
\newcommand{\idem}{\mathbf{e}}
\newcommand{\spc}{\mathfrak{E}}
\newcommand{\spcc}{\mathfrak{F}}
\newcommand{\spccc}{\mathfrak{G}}
\newcommand{\bnch}{\mathfrak{B}}
\newcommand{\bnchh}{\mathfrak{C}}
\newcommand{\un}{\operatorname{un}}
\newcommand{\cnst}{C}
\newcommand{\totcusp}{\mathcal{T}^{\cusp}}
\newcommand{\csp}[1]{#1_c}
\newcommand{\Lin}{\mathfrak{L}}
\newcommand{\Lins}{\Lin_{\mathbf{s}}}
\newcommand{\Linb}{\Lin_{\mathbf{b}}}
\newcommand{\lead}{\mathcal{L}}
\newcommand{\proj}{\mathfrak{p}}
\newcommand{\unq}{\operatorname{unq}}
\newcommand{\der}{\operatorname{der}}
\newcommand{\AAA}{A}
\newcommand{\smth}{\operatorname{sm}}
\newcommand{\stds}{\mathcal{P}}
\newcommand{\rest}{\lvert}
\newcommand{\aut}{\operatorname{aut}}
\newcommand{\funct}{\mathfrak{H}}          % Hilbert spcae of L^2 functions
\newcommand{\functb}{\mathfrak{F}}         % sup norm
\newcommand{\umd}{\functb_{\operatorname{umg}}}
\newcommand{\umdf}{C^\infty}
\newcommand{\rapid}{C_c^\infty}%{\functb_{\operatorname{rpd}}}
\newcommand{\Weyl}{W}
\newcommand{\Ker}{\operatorname{Ker}}
\newcommand{\cusp}{\operatorname{cusp}}
\newcommand{\Siegel}{\mathcal{S}}
\newcommand{\autspace}{\mathcal{X}}
\newcommand{\zspace}{\mathcal{Z}}
\newcommand{\aux}{\operatorname{aux}}
\newcommand{\fnl}{\operatorname{fnl}}
\newcommand{\supp}{\operatorname{supp}}
\newcommand{\modulus}{\delta}
\newcommand{\srts}{\Delta}
\newcommand{\CT}{\operatorname{pr}}
\newcommand{\C}{\mathbb{C}}
\newcommand{\Q}{\mathbb{Q}}
\newcommand{\at}{\operatorname{main}}
\newcommand{\diff}{\mathbf{D}}
\newcommand{\Id}{\operatorname{Id}}
\newcommand{\ball}{\mathcal{B}}
\newcommand{\K}{\mathbf{K}}
\newcommand{\A}{\mathbb{A}}
\newcommand{\z}{\mathfrak{z}}
\newcommand{\PE}{\mathfrak{P}}
\newcommand{\aaa}{\mathfrak{a}}
\newcommand{\Ht}{H}
\newcommand{\AF}{\mathcal{A}}
\newcommand{\iii}{{\rm i}}
\newcommand{\bs}{\backslash}
\newcommand{\sprod}[2]{\left\langle#1,#2\right\rangle}
\newcommand{\abs}[1]{\left|{#1}\right|}
\newcommand{\norm}[1]{\lVert#1\rVert}
\newcommand{\dsum}{\oplus}
\newcommand{\Exp}{\mathcal{E}}
\newcommand{\Img}{\operatorname{Im}}
\newcommand{\y}{\operatorname{y}}
\renewcommand{\Re}{\operatorname{Re}}
\newcommand{\Sol}{\operatorname{Sol}}
\newcommand{\Hom}{\operatorname{Hom}}
\newcommand{\vol}{\operatorname{vol}}
\newcommand{\one}{\mathbf{1}}
\newcommand{\loc}{\operatorname{loc}}
\newcommand{\rpd}{\operatorname{rpd}}
\newtheorem*{theorem}{Theorem}
\newtheorem*{lemma}{Lemma}
\newtheorem*{proposition}{Proposition}
\newtheorem*{corollary}{Corollary}
\newtheorem*{example}{Example}
\newtheorem*{definition}{Definition}
\newtheorem*{remark}{Remark}
\newtheorem{claim}{Claim}
\numberwithin{equation}{section}
\begin{document}

\title[Eisenstein series]{On the meromorphic continuation of Eisenstein series}
\author{Joseph Bernstein}
\address{School of Mathematical Sciences, Tel Aviv University, Tel Aviv 6997801, Israel}
\email{bernstei@tauex.tau.ac.il}
\thanks{J.B. is partially supported by ERC grant 291612 and ISF grant ``Integrals of Automorphic functions''}
\author{Erez Lapid}
\address{Department of Mathematics, Weizmann Institute of Science, Rehovot 7610001, Israel}
\email{erez.m.lapid@gmail.com}
\date{\today}

\begin{abstract}
Eisenstein series are ubiquitous in the theory of automorphic forms.
The traditional proofs of the meromorphic continuation of Eisenstein series, due to Selberg and Langlands,
start with cuspidal Eisenstein series as a special case, and deduce the general case from spectral theory.

We present a ``soft'' proof which relies only on rudimentary Fredholm theory (needed only in the number field case).
It is valid for Eisenstein series induced from an arbitrary automorphic form.

The proof relies on the principle of meromorphic continuation. It is close in spirit to Selberg's later
proofs.
\end{abstract}

\maketitle

\setcounter{tocdepth}{1}
\tableofcontents

\section{Introduction}

The theory of Eisenstein series is fundamental to the spectral theory of automorphic forms.
%on locally symmetric spaces of negative curvature of finite volume.
It was initiated by Maass and Roelcke and developed in earnest by Selberg in the 1950s, mostly in the rank one case \cite{MR0088511}.
The general case was undertaken by Langlands in the 1960s \cites{MR0579181, MR1361168}.

Let $G$ be a semisimple Lie group and $\Gamma$ a lattice in $G$.
The goal of Selberg and Langlands was to write down the spectral decomposition of $L^2(\Gamma\bs G)$
in terms of Eisenstein series induced from the discrete spectrum of automorphic quotients of Levi subgroups.
% type $\Gamma_M\bs M^{\der}$ where $M$ is a Levi subgroup of $G$ and
%$\Gamma_M$ is a lattice of the semisimple part $M^{\der}$ of $M$.
The Eisenstein series are automorphic forms on $\Gamma\bs G$ that are defined as Poincar\'e series,
i.e., as sums over certain quotients of $\Gamma$.
They are built from automorphic forms on automorphic quotients of Levi subgroups $M$,
and depend on a spectral parameter $s$ in an $r$-dimensional complex space %$r$ complex variables
where $r$ is the corank of $M$ in $G$.
The series converges absolutely when $\Re s$ is sufficiently regular in the positive Weyl chamber.
However, for the $L^2$ decomposition one needs to consider the Eisenstein series for $\Re s=0$.

The first order of business is to meromorphically continue the Eisenstein series to the whole complex $r$-space.
Langlands carried this out in two steps. The first was to prove it in the special case where the inducing data is cuspidal.
This was done by extending Selberg's methods. The second step, which is much more complicated and is one of Langlands's
greatest achievements, is to describe the discrete spectrum in terms of residues of Eisenstein series induced
from cusp forms. This is delicate because, among other things, it can happen
that more than $r$ singular hyperplanes intersect at a point and the multidimensional residue calculus becomes
highly nontrivial.

Subsequently, it was proved by Franke that every automorphic form is a linear combination
of Laurent coefficients of Eisenstein series induced from cusp forms \cite{MR1603257}*{Corollary 1 on p. 236}.
%-- see \cite{MR1361168}*{Appendix II}.
This provides meromorphic continuation of Eisenstein series induced
from any automorphic form.

\nocite{MR0176097,MR0249539,MR993313,CoSar,MR0081967} \nocite{MR0244260}
%(\cite{MR0579181}
%see also \cite{CoSar}, \cite{MR1361168}*{Ch. 4} and \cite{MR1008186} which are based on a subsequent proof by Selberg).
%In his celebrated work, Langlands described the discrete part of $L^2(G(F)\bs G(\A)^1)$ in terms of
%residues of Eisenstein series for $\varphi\in\AF_P^{\cusp}$ and used it to extend the theorem to the case
%where $\varphi\in\AF_P$ is square-integrable on $M(F)\bs M(\A)^1$.
%(These Eisenstein series furnish the continuous spectrum of $L^2(G(F)\bs G(\A)^1)$.)
%In fact, the setup of Selberg and Langlands is not confined to arithmetic lattices.
%We refer to \cite{MR1361168} for a complete account of Langlands's theory in the adelic setup, including covering groups,
%and additional references.
%For a general $\varphi\in\AF_P$, the theorem follows from the fact that every automorphic form
%is linear combination of Laurent coefficients of cuspidal Eisenstein series -- see \cite{MR1603257}*{Corollary 1 on p. 236} %and \cite{MR1361168}*{Appendix II}.

The goal of this paper is to give a ``soft'' uniform proof of the meromorphic continuation of Eisenstein series
induced from a general automorphic form (not necessarily cuspidal, or in the discrete spectrum).
The proof evinces that the meromorphic continuation is an ``easy'' part of the theory of Eisenstein series.
We do not appeal to either Langlands's description of the discrete spectrum or Franke's theorem,
or in fact to \emph{any} spectral theory beyond rudimentary Fredholm theory. % (in the number field case).

We work in the adelic setting, which is pertaining to lattices arising as congruence subgroups
of reductive groups over number fields.
However, just like in Selberg's and Langlands's proofs, the method works for non-arithmetic lattices as well.
%theorem above uniformly for all $\varphi\in\AF_P$

Moreover, the proof, together with ideas of Delorme and the second author,
considerably simplifies Langlands's proof of the spectral decomposition of $L^2(\Gamma\bs G)$ -- see \cite{2006.12893}.

The idea of the proof was initially conceived already in the 1980s by the first author.
A key ingredient is a general principle of meromorphic continuation (PMC).
% (Theorem \ref{thm: anal} below, which is proved in the appendix).
The PMC splits the proof into two rather separate statements about automorphic forms, which are of independent interest.

The first %(Theorem \ref{thm: lead})
is the fact that any automorphic form is determined by
its ``leading cuspidal components'' -- namely the terms corresponding to the unnormalized cuspidal exponents
whose real part is in the closure of the positive Weyl chamber.
This is an extension of a basic result of Langlands.
The second %(Theorem \ref{thm: mainfin})
is a locally uniform finiteness result for automorphic forms. %(in the number field case).
This is a technical strengthening,
proved along the same lines, of the well known result of Harish-Chandra on the finite dimensionality of the space
of automorphic forms with a given $\K$ and $\z$-type \cite{MR0232893}.

%(\cite{MR0232893}*{Theorem 1}).

The theory of Eisenstein series is also applicable to reductive groups over function fields.
%(which give rise to discrete analogues of locally symmetric spaces).
Our proof applies equally well to this case, and in fact it is much easier.
It relies on an algebraic version of the PMC.

In \S\ref{sec: prelim} we will give the precise statement of the meromorphic continuation of Eisenstein series
whose proof is our main goal. We will also provide notation that will be used throughout.
The PMC, which is the point of departure for the proof
will be stated in \S\ref{sec: prmc}, where we also recall some generalities about analytic functions
valued in topological vector spaces.

The PMC is pertaining to the solutions of a system of linear
equations that depends analytically on a parameter. It requires uniqueness (which is essential)
and local finiteness (which is more technical).

Basic Fredholm theory provides a key tool for proving local finiteness.
The system of equations $\Xi(s)$ for which we will apply the PMC to prove the main theorem
will be outlined in broad strokes in \S\ref{sec: sketch}. It will be written down precisely later on in
\S\ref{sec: conclusion}.
The case of the classical Eisenstein series for $\SL_2(\Z)$ will be presented in detail in
\S\ref{sec: SL2} (see also \cite{MR1482800}).
This case already illustrates the lion's share of the ideas for the general case.
The heart of the paper is \S\ref{sec: unique} where we prove that an automorphic form is determined
by its ``leading cuspidal components''. In particular, this yields a characterization of Eisenstein series
sufficiently deep in the range of absolute convergence, in terms of their inducing data.
In turn, this characterization guarantees the required uniqueness for the system $\Xi(s)$ in this region.

Eventually, in view of the PMC, this means that we can \emph{define} Eisenstein series as the solutions of $\Xi(s)$.

The local finiteness of (a subsystem of the system of equations) $\Xi(s)$ in the number field case will be proved in \S\ref{sec: LF}.
As mentioned before, this is closely related to the Harish-Chandra finiteness result and the proof is based on the same ideas.
However, the proof given here provides a more precise result and arguably, simplifies and clarifies the roles of the various ingredients.
In fact, this proof shows that a system of conditions that describe automorphic forms with
fixed $\K$ and $\z$ types is locally finite.

The culmination of the proof of the main result in the number field case is achieved in \S\ref{sec: conclusion}.
Finally, we discuss the function field case (which is considerably simpler) in
\S\ref{sec: conclusion2}.

%(Theorem \ref{thm: anal} below, which is proved in the appendix).

\begin{comment}
The deduction of the theorem will be explained in \S\ref{sec: conclusion} in the number field case
and in \S\ref{sec: conclusion2} in the function field case (which is much easier and relies on an algebraic
version of the principle of meromorphic continuation). We give a brief summary in \S\ref{sec: sketch}
and work out the example of the classical Eisenstein series for $\SL_2(\Z)$ in \S\ref{sec: SL2} (see also \cite{MR1482800}).
\end{comment}
We point out that the uniqueness statement and Fredholm theory are also the key ingredients in Selberg's second proof
of the meromorphic continuation of Eisenstein series in the case where the inducing data is cuspidal
(cf.\ \cite{MR1008186}). Nonetheless, on a technical level our treatment is slightly easier even in this case.

%\Erez{Mention Garrett's book \cites{MR3837525, MR3837526}? Other references}

%\Erez{More history? Bernstein's 1985 letter. Include as an appendix?}

%\Erez{Mention special cases. Mention Harder?}

There are two important additional aspects of Eisenstein series which will not be addressed here.
The first is the extension of the results to non-$\K$-finite, smooth automorphic forms.
The second is the finiteness of order (as meromorphic functions in $s$) of Eisenstein series.
The first point was dealt with in \cite{MR2402686} and in a broader scope in \cite{MR3219530}, with a more recent different approach by Wallach.
The second point is addressed (in the cuspidal case) in the proof of meromorphic continuation by W. M\"uller \cite{MR1025165}*{\S4}
who extended the method of Colin de Verdi\`ere \cite{MR639175}, which in turn is based on the approach of Lax--Phillips \cite{MR1037774}.
At the moment, we do not know how to prove either of the two additional statements with the methods of the present paper.

It goes without saying that Eisenstein series play a ubiquitous role in automorphic forms well beyond
the spectral decomposition of $L^2(\Gamma\bs G)$. It is certainly beyond the scope of this paper to
discuss any of that.

\begin{comment}
\begin{remark}
Let $\Ad_P$ denote the adjoint representation of $P$ on its Lie algebra and let $\modulus_P(p)=\abs{\det\Ad_P(p)}$,
$p\in P(\A)$. (In the conventions of \cite{MR2098271}*{Ch.~7}, the modular function of $P(\A)$ is $\modulus_P^{-1}$.)
Let $\xi$ be a unitary character of $Z_M(F)\bs Z_M(\A)$ and let
$\AF_{P,\xi}^2$ \index{AFPxi@$\AF_{P,\xi}^2$} be the linear subspace of $\AF_P$ consisting of those $\varphi$ such that
\begin{gather*}
\varphi(zg)=\modulus_P^{\frac12}(z)\xi(z)\varphi(g)\ \forall z\in Z_M(\A), g\in G(\A)\text{ and }\\
\norm{\varphi}_{\AF_{P,\xi}^2}^2:=\int_{Z_M(\A)\bs\autspace_P}\abs{\varphi(g)}^2\ dg<\infty.
\end{gather*}
Then, for any $w\in\Weyl(P,Q)$, $\varphi\in\AF_{P,\xi}^2$ and $\varphi'\in\AF_{Q,w\xi}^2$
we have
\[
(M(w,\lambda)\varphi,\varphi')_{\AF_{Q,w\xi}^2}=
(\varphi,M(w^{-1},-w\overline{\lambda})\varphi')_{\AF_{P,\xi}^2}
\]
(\cite{MR1361168}*{II.1.8}).
It follows from the theorem above that $M(w,\lambda):\AF_{P,\xi}^2\rightarrow\AF_{Q,w\xi}^2$ is
holomorphic and unitary for $\Re\lambda=0$ --  cf.~\cite{MR1361168}*{IV.3.12}.
It is also true that $E(\varphi,\lambda)$ is holomorphic near $\iii\aaa_P^*$,
and this can be proved independently of Langlands's description
of the discrete spectrum in terms of residues of Eisenstein series
(cf. \cite{MR2767521}, at least in the number field case).
\end{remark}
\end{comment}

\subsection*{Acknowledgement}
We would like to thank Eitan Sayag for many discussions of the proof that led to significant simplifications of many parts of the proof
(in particular, the uniqueness property).

We are grateful to Peter Sarnak for many fruitful discussions and his input that illuminated different ideas related to this topic,
as well as for encouraging us to write up this note.

We are also indebted to Herv\'e Jacquet for his careful reading of the text and for his many suggestions.

The detailed comments by the referees were also very useful and are very much appreciated.

Finally, the first author would like to thank the Max Planck Institute for Mathematics, where a large part of this work was carried out,
for a very productive atmosphere.

\section{Preliminaries and statement of main result} \label{sec: prelim}

We will use some standard notation and results.
We refer to the standard text \cite{MR1361168} for more details.
(However, we will only use the first two ``easy'' chapters of [ibid.].)

\subsection{General notation}

Let $G$ be a reductive group over a global field $F$ with ring of adeles \index{adeles@$\A$} $\A=\A_F$.
For convenience (although it is not absolutely necessary) we fix a minimal parabolic subgroup $P_0$ of $G$ defined over $F$
with a Levi decomposition $P_0=M_0\ltimes U_0$ over $F$. \index{Par0@$P_0$} \index{M0@$M_0$} \index{U0@$U_0$}

Let $\aaa_0$ be the finite-dimensional $\R$-vector space \index{aaa0*@$\aaa_0^*$, $\aaa_0$}
\[
\aaa_0=\Hom(X^*(P_0),\R)=\Hom(X^*(M_0),\R)=\Hom(X^*(Z_{M_0}),\R)
\]
where $Z_H$ \index{ZH@$Z_H$} is the center of a group $H$ and $X^*(\cdot)$ \index{X*@$X^*(\cdot)$}
denotes the lattice of characters defined over $F$. (If $G$ is split over $F$, then $\aaa_0$ is the Cartan space.)
The dual vector space is
\[
\aaa_0^*=X^*(P_0)\otimes\R=X^*(M_0)\otimes\R=X^*(Z_{M_0})\otimes\R.
\]

Denote by $\stds$ \index{par@$\stds$} the finite set of standard parabolic subgroups of $G$ (i.e., those containing $P_0$)
that are defined over $F$.
Any $P\in\stds$ admits a unique Levi decomposition $P=M\ltimes U$ over $F$ such that $M\supset M_0$.
(If $P$ is not clear from the context, we write $M=M_P$ and $U=U_P$.) \index{MP@$M_P$} \index{UP@$U_P$}
Let $\aaa_P$ be the finite-dimensional $\R$-vector space \index{aaaP*@$\aaa_P^*$, $\aaa_P$}
\[
\aaa_P=\Hom(X^*(P),\R)=\Hom(X^*(M),\R)=\Hom(X^*(Z_M),\R).
\]
We can view $\aaa_P$ canonically both as a subspace and as a quotient of $\aaa_0$.
The dual vector space of $\aaa_P$ is
\[
\aaa_P^*=X^*(P)\otimes\R=X^*(M)\otimes\R=X^*(Z_M)\otimes\R.
\]

For any $P,Q\in\stds$ denote by $\Weyl(P,Q)$ \index{Weyl@$\Weyl$, $\Weyl(P,Q)$} the (possibly empty) finite set of cosets $wM_P(F)$, $w\in G(F)$
such that $wM_Pw^{-1}=M_Q$.
In particular, $\Weyl=\Weyl(P_0,P_0)=N_{G(F)}(M_0)/M_0(F)$ is the Weyl group of $G$.
Any $w\in\Weyl(P,Q)$ induces a linear isomorphism $\aaa_P\rightarrow\aaa_Q$, which uniquely determines $w$.

We let $\Ht_M:M(\A)\rightarrow\aaa_P$ \index{H@$\Ht_M$} be the continuous group homomorphism given by
\[
e^{\sprod{\chi}{\Ht_M(m)}}=\abs{\chi(m)},\ \ \forall m\in M(\A),\chi\in X^*(M)
\]
where we view $\chi$ as a homomorphism $M(\A)\rightarrow\A^*$.
Denote the kernel of \index{MA@$M(\A)^1$} $\Ht_M$ by $M(\A)^1$.

We denote by \index{XP@$\chars_P$} $\chars_P$ the group of continuous quasi-characters of $M(\A)/M(\A)^1=\Ht_M(M(\A))$.
(We will use additive notation for the group $\chars_P$ and write the image of $m\in M(\A)$ under $\lambda\in\chars_P$
by $m^\lambda$.)
In the number field case, $\Ht_M$ is surjective and
\[
\chars_P=X^*(M)\otimes_{\Z}\C=\aaa_P^*\otimes_{\R}\C.
\]
In the function field case, $\Ht_M(M(\A))$ is a lattice in $\aaa_P$ and
we can identify $\chars_P$ with the quotient of $\aaa_P^*\otimes_{\R}\C$ by
a lattice in $\iii\aaa_P^*$, namely the dual lattice of $\Ht_M(M(\A))$ scaled by $2\pi\iii$.
In this case we view $\chars_P$ as a complex algebraic variety isomorphic to $(\C^*)^{\dim\aaa_P}$.
In both cases the map $\Re:\chars_P\rightarrow\aaa_P^*$ is well defined.
(See \cite{MR1361168}*{I.1.4}.)

Fix once and for all a maximal compact subgroup $\K$ \index{K@$\K$} of $G(\A)$ that is
in a ``good position'' with respect to $M_0$ (\cite{MR1361168}*{I.1.4}). In particular, for any $P\in\stds$,
$G(\A)=M(\A)U(\A)\K$ and $M(\A)\cap\K$ is a maximal compact subgroup of $M(\A)$.
Denote by
\[
\m_P:G(\A)\rightarrow M(\A)/M(\A)^1
\]
the left-$U(\A)$ right-$\K$-invariant map extending the canonical projection
\[
M(\A)\rightarrow M(\A)/M(\A)^1.
\]

\subsection{Eisenstein series and intertwining operators}
Write $\autspace=G(F)\bs G(\A)$, \index{XG@$\autspace$, $\autspace_P$} and more generally,
\[
\autspace_P=U(\A)P(F)\bs G(\A)=U(\A)M(F)\bs G(\A)
\]
for any $P=M\ltimes U\in\stds$. The space $\autspace_P$ can be identified with the fibered product $(M(F)\bs M(\A))\times_{M(\A)\cap\K}\K$.
Denote by $\AF_P$ \index{AFP@$\AF_P$} the space of automorphic forms on $\autspace_P$ (\cite{MR1361168}*{I.2.17}).
For any $\varphi\in\AF_P$ and $\lambda\in\chars_P$ set \index{phivar@$\varphi_\lambda$}
$\varphi_\lambda(g)=\varphi(g)\m_P(g)^\lambda$. We have $\varphi_\lambda\in\AF_P$.
Consider the Eisenstein series defined by \index{Evarphi@$E(\varphi,\lambda)$}
\begin{equation} \label{def: eisen}
E(g,\varphi,\lambda)=\sum_{\gamma\in P(F)\bs G(F)}\varphi_\lambda(\gamma g),\ \ \ g\in G(\A).
\end{equation}
(We do not include $P$ in the notation -- hopefully it will be always clear from the context.)
The series converges absolutely and locally uniformly in $g$ and $\lambda$ for $\Re(\lambda)$ sufficiently regular in the positive
Weyl chamber of $\aaa_P^*$ (\cite{MR1361168}*{II.1.5}).
For any $w\in\Weyl(P,Q)$, the intertwining operator $M(w,\lambda):\AF_P\rightarrow\AF_Q$ \index{Minterw@$M(w,\lambda)$} is defined by
the formula\footnote{For any unipotent group $V$ defined over $F$, the Haar measure on $V(\A)$ is normalized so that $\vol(V(F)\bs V(\A))=1$.}
\[
[M(w,\lambda)\varphi]_{w\lambda}(g)=\int_{(wU_Pw^{-1}\cap U_Q)(\A)\bs U_Q(\A)}\varphi_\lambda(w^{-1}ug)\ du,\ \ \ g\in G(\A).
\]
The integral converges locally uniformly in $g$ and $\lambda$ provided that $\sprod{\Re\lambda}{\alpha^\vee}\gg0$ for every
root $\alpha\in\Phi_P$ such that $w\alpha<0$ (\cite{MR1361168}*{II.1.6}).

In the number field case, let $\umd(\autspace)$ \index{Fmod@$\umd(\autspace)$} be the space of smooth functions of
uniform moderate growth on $\autspace$
(\cite{MR1361168}*{I.2.3}).
It is a countable union of Fr\'echet spaces (see \S\ref{sec: LF}).

\subsection{The main result}

The goal of the paper is to give a soft proof of the following result.

\begin{theorem} \label{thm: main}
Let $P\in\stds$ and $\varphi\in\AF_P$.
\begin{enumerate}
\item In the number field case, the Eisenstein series $E(\varphi,\lambda)$, originally defined and holomorphic for
$\sprod{\Re\lambda}{\alpha^\vee}\gg0$ $\forall\alpha\in\srts_P$, extends to a meromorphic function
$\lambda\mapsto E(\varphi,\lambda)\in\umd(\autspace)$ on $\chars_P$.
Whenever regular, $E(\varphi,\lambda)\in\AF_G$.
\item In the function field case, there exists a polynomial $p$ on $\chars_P$ such that
$\lambda\mapsto p(\lambda)E(g,\varphi,\lambda)$ is a polynomial on $\chars_P$ for all $g\in G(\A)$.
Moreover, $p(\lambda)E(\cdot,\varphi,\lambda)\in\AF_G$ for all $\lambda\in\chars_P$.
\item For any $w\in\Weyl(P,Q)$, the map $\lambda\mapsto M(w,\lambda)\varphi$, taking values in a finite-dimensional
linear subspace of $\AF_Q$, admits a meromorphic continuation to $\chars_P$
(which is a rational function on $\chars_P$ in the function field case).
\item For any $w\in\Weyl(P,Q)$ we have the functional equation
\[
E(M(w,\lambda)\varphi,w\lambda)=E(\varphi,\lambda)\ \ \ \lambda\in\chars_P.
\]
\item For any $w\in\Weyl(P,Q)$ and $w'\in\Weyl(Q,Q')$ we have
\[
M(w'w,\lambda)=M(w',w\lambda)\circ M(w,\lambda)\ \ \ \lambda\in\chars_P.
\]
\item The singularities of $M(w,\lambda)\varphi$ are along root hyperplanes.
The same is true for the singularities of $E(\varphi,\lambda)$.
\end{enumerate}
\end{theorem}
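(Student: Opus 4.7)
The overall strategy is Bernstein's continuation principle: characterize $E(\varphi,\lambda)$ as the unique solution of a holomorphic family of linear equations in the Fr\'echet space $\umd(\autspace_G)$, then apply rudimentary Fredholm theory to propagate meromorphically beyond the region of absolute convergence. The role usually played by spectral decomposition is taken over by an axiomatic characterization via leading asymptotics of constant terms along the standard parabolics $Q\in\stds$.

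First, I would make precise the axiomatic characterization. In the convergence range $\sprod{\Re\lambda}{\alpha^\vee}\gg0$, the Eisenstein series $E(\varphi,\lambda)$ lies in $\umd(\autspace_G)$, is $\K$-finite, and its constant term along each $Q\in\stds$ has leading asymptotic $\sum_{w\in\Weyl(P,Q)}[M(w,\lambda)\varphi]_{w\lambda}$ modulo exponentials of smaller real part, with no leading asymptotics along $Q$ not associate to $P$. I would then prove a uniqueness lemma: for $\lambda$ outside a locally finite union of hyperplanes, the only element of $\umd(\autspace_G)$ with these prescribed leading asymptotics is $E(\varphi,\lambda)$ itself. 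The proof of uniqueness would proceed by pairing a hypothetical discrepancy against the Arthur truncation operator and exploiting the rapid decay of the resulting integrand on a Siegel domain.

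Next, I would construct a \emph{quasi-section}: a holomorphic family $\lambda\mapsto E^{\aux}(\varphi,\lambda)\in\umd(\autspace_G)$ defined for all $\lambda\in\chars_P$, obtained by smoothly truncating $\varphi_\lambda$ to a Siegel domain and summing the translates over $P(F)\bs G(F)$. This sum is locally finite, so depends holomorphically on $\lambda$, but it fails the axiomatic system by a controlled amount. The discrepancy defines a holomorphic family of linear operators $T(\lambda)$ acting on the finite-dimensional space of candidate leading asymptotic data. The key analytic claim is that $\Id-T(\lambda)$ is Fredholm, so that its inverse exists as a meromorphic family by a rudimentary form of the Gohberg--Keldysh lemma. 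Applying this inverse corrects $E^{\aux}(\varphi,\lambda)$ into an actual solution of the system, yielding the meromorphic continuations in (1)--(3) simultaneously, with $M(w,\lambda)\varphi$ reconstructed from the leading asymptotics of the corrected section. In the function field case the underlying ``Fr\'echet'' data are replaced by finite-dimensional vector spaces and meromorphy is upgraded to rationality.

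The remaining statements then follow formally. Items (4) and (5) are consequences of the uniqueness characterization: both sides of each identity are meromorphic families satisfying the same system of axioms, so they agree in the convergence range and hence everywhere by analytic continuation. For (6), the root-hyperplane structure is inherited from the matrix coefficients of $T(\lambda)$, which are expressible through rank-one intertwining operators that are themselves holomorphic away from root hyperplanes; the determinant whose vanishing controls the poles is then a holomorphic function whose zero set is a union of such hyperplanes. The principal anticipated obstacle is the uniqueness/vanishing step --- ruling out nontrivial elements of $\umd(\autspace_G)$ with no leading asymptotics --- which will demand a delicate analysis of constant terms on Siegel domains and is where the finer properties of $\umd(\autspace_G)$ must be set up carefully.
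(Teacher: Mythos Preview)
Your high-level strategy --- characterize $E(\varphi,\lambda)$ as the unique solution of a holomorphic family of equations and continue meromorphically --- matches the paper's philosophy, and your deduction of (4) and (5) from uniqueness is essentially how the paper proceeds. But the execution diverges substantially and the two places you flag as routine are precisely where the paper does the real work.

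\textbf{Uniqueness.} The paper does not use Arthur truncation. It proves that any nonzero automorphic form has a \emph{leading} cuspidal exponent (real part in $\aaa_{0,+}^*$) by a combinatorial argument: pick a parabolic $P$ minimal with $\Exp_P^{\cusp}(\phi)\ne\emptyset$ and an exponent maximizing $\sprod{\Re\lambda}{\varpi^\vee}$ for a fixed regular $\varpi^\vee$; a rank-one reduction forces this exponent to be leading. Combined with the observation that for $\Re\lambda$ deep in the positive chamber only the $w=e$ summand in the constant-term formula contributes a leading exponent, this pins down $E(\varphi,\lambda)$. Your pairing-against-truncation sketch does not explain how to isolate the leading part, and note that the cuspidal constant term along $Q$ is indexed by $\Weyl(P;Q)$ (those $w$ with $wM_Pw^{-1}\supset M_Q$), not $\Weyl(P,Q)$, so ``no asymptotics along $Q$ not associate to $P$'' is not the right condition.

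\textbf{Continuation mechanism.} The paper does \emph{not} build a quasi-section or a correction operator $T(\lambda)$. It works with an abstract system $\Xi(\lambda)$ on $\umd(\autspace_G)$ --- convolution-eigenfunction equations $\delta(h_i(\lambda))\psi=c_i(\lambda)\psi$ supplied by a lemma of Harish-Chandra, together with conditions on the cuspidal components of $\cnst_{G,Q}\psi$ --- and applies an abstract principle (essentially Cramer's rule) that any holomorphic, locally finite-type system with a unique solution on an open set continues meromorphically. The hard step is \emph{local finiteness}: a uniform-in-$\lambda$ strengthening of Harish-Chandra's finite-dimensionality theorem, proved by passing to weighted $L^2$-spaces on Siegel sets and showing a convolution operator is Hilbert--Schmidt on the pseudo-cuspidal part, then running an induction over parabolics via a Fredholm criterion. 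Your claim that the discrepancy lives on a ``finite-dimensional space of candidate leading asymptotic data'' hides exactly this: the constant-term data are not a priori finite-dimensional, and showing the solution set is locally trapped in a holomorphically varying finite-dimensional family is the technical core. In the function field case the paper bypasses this entirely via an algebraic version of the continuation principle (no finiteness needed), which is also not your route.

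For (6), the paper does not read off singularities from a determinant of $T(\lambda)$; it extracts $M(w,\lambda)\varphi$ from $\cnst_{G,Q}E(\varphi,\lambda)$ by difference operators, factors $w$ into elementary symmetries $s_\alpha$ (for which $M(s_\alpha,\lambda)$ depends only on $\sprod{\lambda}{\alpha^\vee}$), and then reads the Eisenstein singularities from its cuspidal components.
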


As mentioned above, and will be explained in more detail in \S\ref{sec: sketch} below,
we use the principle of meromorphic continuation (Theorem \ref{thm: anal} below).

\section{A principle of meromorphic continuation} \label{sec: prmc}
Throughout this section, $\mnfld$ is a complex analytic manifold.
\subsection{Meromorphic functions in locally convex topological vector spaces}
Let $\spc$ be a complex, Hausdorff, locally convex topological vector space (\lctvs).
As usual, we denote by $\spc'$ (the dual of $\spc$) the space of continuous linear forms on $\spc$.

We say that a function $f:\mnfld\rightarrow \spc$ is analytic (or holomorphic) if for every $\mu\in\spc'$,
the scalar-valued function $\sprod{\mu}{f(s)}:\mnfld\rightarrow\C$ is analytic.

Let $U$ be an open dense subset of $\mnfld$. We say that a holomorphic function $f:U\rightarrow \spc$ is meromorphic on $\mnfld$
if for every $s_0\in \mnfld$ there exist a connected neighborhood $W$ and holomorphic functions $0\not\equiv g:W\rightarrow\C$ and
$h:W\rightarrow \spc$ such that $g(s)f(s)=h(s)$ for all $s\in U\cap W$.

The above notion of analyticity is discussed in \cite{MR0058865}*{\S2}.
In particular, every analytic function is continuous (cf.\ footnote 4(a) in the proof of \cite{MR0058865}*{Th\'eor\`eme 1}).
Moreover, suppose that the closed, absolutely convex hull of any compact set in $\spc$ is compact.
(This holds for any quasi-complete space, in particular for any Fr\'echet space.) Then,
\begin{itemize}
\item If $\mnfld$ is an open subset of $\C^n$,
then $f:\mnfld\rightarrow \spc$ is holomorphic $\iff$ $f$ admits partial derivatives with respect to each variable
$\iff$ $f$ admits a convergent power series expansion in $\spc$ around every point of $\mnfld$ (\cite{MR0058865}*{Th\'eor\`eme 1}).
\item A function $f:\mnfld\rightarrow\spc$ is analytic if and only if
it is continuous and $\sprod{\mu}{f(s)}:\mnfld\rightarrow\C$ is analytic for all $\mu$ in a separating subset of $\spc'$
(\cite{MR0058865}*{\S2, Remarque 1}).
(Recall that a subset of $\spc'$ is called separating if its annihilator in $\spc$ is trivial.)
This gives a practical criterion to check whether a function is analytic.
\end{itemize}

\begin{example} \label{ex: ccinfty}
Suppose that $F$ is a number field and let $C_c^\infty(G(\A))$ \index{Ccinfty@$C_c^\infty(G(\A))$} be the algebra (under convolution)
of compactly supported, smooth functions on $G(\A)$.
As a \lctvs, it is the strict inductive limit (cf.\ \cite{MR910295}*{\S II.4.6}), over the compact subsets $C$ of $G(\A)$
and the open subgroups $K$ of $G(\A_f)$, of the Fr\'echet spaces
of bi-$K$-invariant functions in $G(\A)$ that are supported on $C$
and are $C^\infty$ as a function of $G(F_\infty)$.
Let $\univ(\gggg)$ \index{univ@$\univ(\gggg)$} be the universal enveloping algebra
of the complexification $\gggg$ of the Lie algebra of $G(F_\infty)$.

Suppose for simplicity that $\mnfld$ is connected.
Then, a function $h:\mnfld\rightarrow C_c^\infty(G(\A))$ (i.e., a family $h_s$, $s\in\mnfld$ of smooth,
compactly supported functions on $G(\A)$) is holomorphic
if and only if the following conditions are satisfied (cf.\ \cite{MR0058865}*{\S3}).
\begin{enumerate}
\item There exists a compact subset $C$ of $G(\A)$ such that $\supp h_s\subset C$ for all $s\in \mnfld$.
\item There exists an open subgroup $K$ of $G(\A_f)$ such that $h_s$ is bi-$K$-invariant for all $s\in \mnfld$.
\item For any $g\in G(\A)$, the function $s\mapsto h_s(g)$ is analytic.
\item For any $X\in\univ(\gggg)$, viewed as a differential operator on $G(F_\infty)$, the function $Xh_s$ is continuous on $\mnfld\times G(\A)$.
\end{enumerate}
In this case, we refer to $h_s$ as an analytic family of smooth, compactly supported functions on $G(\A)$.
\end{example}

We refer the reader to \cite{MR2357988} and the references therein for more discussion about analytic functions
and their subtleties, including some interesting counterexamples.

\subsection{Analytic families of operators}
Let $\spc$ and $\spcc$ be two Hausdorff \lctvs s.
For brevity, by an \emph{operator} from $\spc$ to $\spcc$ we will always mean a continuous linear map.
We denote by $\Lin(\spc,\spcc)$ \index{Lin@$\Lin(\spc,\spcc)$, $\Lins(\spc,\spcc)$, $\Linb(\spc,\spcc)$}
the space of operators from $\spc$ to $\spcc$.
Consider the pointwise convergence topology on $\Lin(\spc,\spcc)$, i.e., the coarsest topology for which the
evaluation maps $e_v:\Lin(\spc,\spcc)\rightarrow \spcc$, $v\in \spc$ given by $A\mapsto A(v)$,
are continuous. Equivalently, it is the Hausdorff, locally convex topology defined by the seminorms $p(A(v))$ where $v\in \spc$
and $p$ is a continuous seminorm on $\spcc$.
We write $\Lins(\spc,\spcc)$ for $\Lin(\spc,\spcc)$ with this topology.
Its dual space can be identified with the algebraic tensor product $\spc\otimes\spcc'$
\cite{MR0372565}*{Proposition 23 on p. 79}.
Thus (cf.\ \cite{MR0058865}*{\S2, Remarque 2}),
a function $A:\mnfld\rightarrow\Lins(\spc,\spcc)$ (i.e., a family of operators
$A_s:\spc\rightarrow \spcc$, $s\in \mnfld$) is holomorphic if and only if for every $v\in \spc$
the function $s\mapsto A_s(v)\in \spcc$ is holomorphic, or equivalently, for every $v\in\spc$ and
$\mu\in\spcc'$ the function $s\mapsto\mu(A_s(v))$ is holomorphic.
In this case, we will simply say that $A_s$, $s\in\mnfld$ is a holomorphic family of operators.

We may also consider the finer topology on $\Lin(\spc,\spcc)$ of uniform convergence on bounded sets, which is given by the seminorms
$\sup_{v\in B}p(A(v))$ where $p$ is a continuous seminorm on $\spcc$ and $B$ is a bounded subset of $\spc$.
We write $\Linb(\spc,\spcc)$ for $\Lin(\spc,\spcc)$ with this topology.
For instance, if $\spc$ and $\spcc$ are Banach spaces, then $\Linb(\spc,\spcc)$ is the Banach space with the
usual operator norm. Of course, if $\spc$ is finite-dimensional, then $\Lins(\spc,\spcc)$ and $\Linb(\spc,\spcc)$ coincide,
but otherwise the topologies are different.
In principle, we could have defined a strong analytic family of operators as an analytic function
from $\mnfld$ to $\Linb(\spc,\spcc)$.
However, it follows from the uniform boundedness principle (see \cite{MR0058865}*{\S2, Remarque 2} and
\cite{MR910295}*{\S III.4.3, Corollary 1})
that any analytic family of operators from $\spc$ to $\spcc$ is automatically analytic in the strong
sense if $\spc$ is barrelled (in particular, if $\spc$ is a Fr\'echet space,
or more generally, an arbitrary inductive limit of Fr\'echet spaces)
or if $\spc$ is semi-complete (i.e., every Cauchy sequence converges).
Fortunately, all \lctvs s considered in the body of the paper will be barrelled,
so we will not need to make the distinction between analytic and strong analytic families of operators.

If $\spc$, $\spcc$, $\spccc$ are Hausdorff \lctvs s and $A_s:\spc\rightarrow\spcc$
and $B_s:\spcc\rightarrow\spccc$, $s\in\mnfld$ are analytic families of operators,
then $B_s\circ A_s$ is an analytic family of operators from $\spc$ to $\spccc$.
This follows from Hartogs's Theorem on separate holomorphicity \cite{MR1846625}*{\S0.2 and Theorem 1.2.5}.
A similar statement holds for strongly analytic families
(although as was just pointed out, we will not need it).
Note that this argument makes the additional assumptions in \cite{MR0058865}*{\S2, Remarque 4} unnecessary.

\subsection{Analytic systems of linear equations}

Let $V$ be a vector space.
By a system $\Xi$ of linear equations on $v\in V$ we will mean
a collection $(W_i,A_i,w_i)$, $i\in I$ of triples consisting of a vector space $W_i$
a linear transformation $A_i:V\rightarrow W_i$ and a vector $w_i\in W_i$. The equations take the form
\[
A_iv=w_i,\ \ i\in I.
\]
We denote by $\Sol(\Xi)$ \index{Sol@$\Sol(\Xi)$} the set of solution of $\Xi$ in $V$.

It is easy to make sense of an analytic family of systems of linear equations, as follows.

\begin{definition} \label{def: anal}
Let $\spc$ be a Hausdorff \lctvs.
\begin{enumerate}
\item Let $(\spcc_i,\mu_i,c_i)$, $i\in I$ be a (possibly infinite) family of triples consisting of
\begin{itemize}
\item A Hausdorff \lctvs{} $\spcc_i$.
\item An analytic family $(\mu_i)_s$, $s\in\mnfld$ of operators from $\spc$ to $\spcc_i$.
\item An analytic function $c_i:\mnfld\rightarrow \spcc_i$.
\end{itemize}
We say that the system $\Xi(s)$ of linear equations (on $v\in \spc$)
\[
(\mu_i)_s(v)=c_i(s),\ i\in I
\]
depends analytically on $s$ (or simply, is an analytic family).
\item Let $A_s$, $s\in\mnfld$ be a family of subsets of $\spc$. We say that $A_s$ is of finite type if
there exist a finite-dimensional vector space
$L$ and an analytic family $\lambda_s$, $s\in\mnfld$ of injective operators $L\rightarrow\spc$ such that
$A_s\subset\Img\lambda_s$ for all $s\in\mnfld$.
%%If moreover $\lambda_s$ can be chosen to be injective for all $s\in\mnfld$, then we say that $A_s$ is strongly of finite type.
\item We say that $A_s$ is locally of finite type if for every $s_0\in\mnfld$ there exists an open neighborhood $W$
in $\mnfld$ such that $A_s$, $s\in W$ is of finite type.
\item Finally, we say that the family of equations $\Xi=(\Xi(s))_{s\in \mnfld}$ is (locally) of finite type
if the same is true for $\Sol(\Xi(s))$.
\end{enumerate}
\end{definition}

\begin{theorem}[Principle of meromorphic continuation] \label{thm: anal}
Let $\Xi=(\Xi(s))_{s\in \mnfld}$ be an analytic family of systems of linear equations
that is locally of finite type. Let \index{Munq@$\mnfld_{\unq}$}
\[
\mnfld_{\unq}=\{s\in \mnfld\mid\Sol(\Xi(s))=\{v(s)\}\}
\]
be the set of $s\in \mnfld$ for which the system $\Xi(s)$ admits a unique solution $v(s)$.
Suppose that $\mnfld$ is connected and that the interior $\mnfld_{\unq}^\circ$ of $\mnfld_{\unq}$ is nonempty.
Then,
\begin{enumerate}
\item $\mnfld_{\unq}$ is open and its complement is analytic.
\item $v$ is holomorphic on $\mnfld_{\unq}$.
\item $v$ is meromorphic on $\mnfld$.
\end{enumerate}
\end{theorem}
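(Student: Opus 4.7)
The plan is to use the local finite-type hypothesis to reduce $\Xi$ to a linear system of analytic equations in a finite-dimensional space, produce a meromorphic candidate for $v$ via Cramer's rule, and identify this candidate with the true unique solution by propagating analytic identities from $\mnfld_{\unq}^\circ$. Fix $s_0\in\mnfld$ and work in a connected neighborhood $W$ on which finite type supplies a finite-dimensional $\C$-vector space $L$ and an analytic family $\lambda_s:L\to\spc$ with $\Sol(\Xi(s))\subset\Img\lambda_s$. Pulling back via $T_i(s):=(\mu_i)_s\circ\lambda_s:L\to\spc_i$, put $\Sol_L(s):=\{w\in L:T_i(s)w=c_i(s)\ \forall i\in I\}$, so that $\lambda_s(\Sol_L(s))=\Sol(\Xi(s))$. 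For each finite $F\subset I$ write $T_F(s):=(T_i(s))_{i\in F}:L\to\prod_{i\in F}\spc_i$ and $c_F(s):=(c_i(s))_{i\in F}$. The partial rank $r_F(s):=\operatorname{rank} T_F(s)$ is integer-valued, lower semicontinuous in $s$, non-decreasing in $F$, and bounded by $\dim L$; by analyticity its maximum $r_F^*$ is attained off the common zero locus of all $r_F^*\times r_F^*$ minors, a proper analytic subset of $W$, hence on an open dense subset. The $r_F^*$ stabilize in $F$ at some $r^*\le\dim L$; pick a finite $F_0$ with $r_{F_0}^*=r^*$. On the open dense $W_0:=\{s\in W:r_{F_0}(s)=r^*\}$, the full rank $\sup_F r_F(s)$ is wedged between $r^*=r_{F_0}(s)$ and $\sup_F r_F^*=r^*$, hence equals $r^*$, so $\Ker T_{F_0}(s)=\bigcap_{i\in I}\Ker T_i(s)$, and the $F_0$-subsystem cuts out $\Sol_L(s)$ whenever the latter is non-empty.

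Shrinking around $s_1\in W_0$, pick a linear complement $L'\subset L$ of dimension $r^*$ to $\Ker T_{F_0}(s_1)$, a basis $(e_j)_{j=1}^{r^*}$ of $L'$, and continuous linear forms $\psi_1,\dots,\psi_{r^*}$ on $\prod_{i\in F_0}\spc_i$ such that $\det M(s_1)\ne 0$, where $M(s):=\bigl(\psi_k(T_{F_0}(s)e_j)\bigr)_{k,j}$. Cramer's rule produces a unique $w'(s)\in L'$ solving $M(s)w'(s)=\bigl(\psi_k(c_{F_0}(s))\bigr)_k$ whose coordinates are ratios of analytic functions, so $v(s):=\lambda_s(w'(s))$ is meromorphic in $s$. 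To identify $v$ with the true unique solution on a dense open subset, I would verify two analytic identities: (a) $\lambda_s$ vanishes on $\Ker T_{F_0}(s)$, so $v$ does not depend on the chosen complement $L'$; and (b) $w'(s)$ satisfies $T_i(s)w'(s)=c_i(s)$ for every $i\in I$, not merely the $r^*$ combinations picked out by the $\psi_k$. Tested against separating families of continuous linear forms on $\spc$ and on each $\spc_i$, both reduce to collections of scalar analytic identities on $W_0$. They hold on the non-empty open set $W\cap\mnfld_{\unq}^\circ$ (when $W$ meets $\mnfld_{\unq}^\circ$), hence by the identity theorem on the connected set $W_0$ (connected because proper analytic subsets of a connected complex manifold have real codimension at least two) they hold throughout $W_0$. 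Therefore $v$ equals the unique solution on the open dense set $W_0\cap\{\det M\ne 0\}$ and is meromorphic on $W$.

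To globalize, apply the construction in a neighborhood of every $s_0\in\mnfld$ and glue the local meromorphic candidates. Once the candidate in a given neighborhood has been identified with the unique solution on a dense open subset, the set $\mnfld_{\unq}^\circ$ contains that subset and hence meets every overlapping neighborhood, triggering the identification there as well; by connectedness of $\mnfld$ and non-emptiness of $\mnfld_{\unq}^\circ$ the identification propagates along chains of overlaps, producing a global meromorphic $v$ on $\mnfld$ that is holomorphic on an open dense $U\subset\mnfld_{\unq}$. The main obstacle in the plan is the identification step, where the pointwise uniqueness hypothesis must be upgraded to analytic identities that then spread by the identity theorem; the success of this upgrade hinges on the finite-dimensionality of $L$ (which brings in Cramer's rule and makes the generic-rank locus open dense) and on the connectedness of that locus, a feature specific to the complex analytic setting.
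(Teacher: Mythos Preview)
Your argument is correct and follows the same Cramer's-rule strategy as the paper: pull the system back through the finite-type map $\lambda_s$, extract a square subsystem, solve it by Cramer, and propagate the identification with the unique solution from $\mnfld_{\unq}^\circ$ by the identity theorem. The one structural difference is that the paper, before pulling back, first replaces $L$ by a subspace $L_0$ of dimension $k=\max_{s\in W}\dim\Img\lambda_s$ on which $\lambda_s$ is injective off a hypersurface $\{D_1=0\}$ (found via Hahn--Banach). This forces the pulled-back system on $L_0$ to have at most one solution wherever $D_1\ne0$, so your verification (a) becomes unnecessary, and the square subsystem extracted at a point $s_2\in\{D_1\ne0\}\cap\mnfld_{\unq}^\circ$ is automatically $k\times k$ on a $k$-dimensional space. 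The paper also reduces at the outset to $\spc_i=\C$ by pairing against $\spc_i'$, which streamlines the extraction of that subsystem. Your route works, but step (a)---that $\lambda_s$ annihilates the \emph{varying} subspace $\Ker T_{F_0}(s)$---is not literally a family of scalar analytic identities as you state; you must first recast it, e.g.\ as the vanishing of all $(r^*{+}1)\times(r^*{+}1)$ minors of the matrix with rows $\psi_k\circ T_{F_0}(s)$ together with $\nu\circ\lambda_s$ (for each $\nu\in\spc'$) in a fixed basis of $L$. Your globalization by chains of overlapping charts is equivalent to the paper's cleaner observation that the local argument shows $\overline{\mnfld_{\unq}^\circ}$ is open, hence (being closed in the connected $\mnfld$) equals $\mnfld$.
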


%%%Then, $\mnfld_{\unq}$ contains an open dense subset $U$ of $\mnfld$ such that $v$ is holomorphic on $U$ and meromorphic on $\mnfld$.

%%\begin{remark}
%%We do not know whether one can dispense with the stronger assumption for the last part of the theorem.
%%\end{remark}

The proof, which is a simple application of Cramer's rule, will be given in the appendix.

\subsection{}

We conclude this section by describing the basic tool for proving that a system is locally of finite type,
namely Fredholm theory.

We first recall that for any compact operator $K$ on a Banach space $\bnch$ the operator $T=\Id_{\bnch}-K$
is Fredholm, and hence it is invertible modulo operators of finite rank, that is, there exists
an operator $S$ on $\bnch$ such that $ST=\Id_{\bnch}+F$ and $TS=\Id_{\bnch}+F'$ for finite rank operators
$F$ and $F'$ \cite{MR1865513}*{Remark 3.3.3}. (In fact, since $T$ is of index $0$, we can take $S$ to be invertible.)

\begin{lemma}[Fredholm's criterion] \label{fred1}
Let $\bnch,\bnchh$ be Banach spaces and let $\mu_s$, $s\in\mnfld$ be an analytic family of operators from $\bnch$ to $\bnchh$.
Suppose that for some $s_0\in \mnfld$, $\mu_{s_0}$ is left-invertible modulo compact operators.
Then, the homogeneous system $\Xi(s)$ (in $\bnch$) given by
\[
\mu_sv=0
\]
is of finite type near $s_0$.
\end{lemma}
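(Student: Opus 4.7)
The plan is to produce a finite-rank parametrix at $s_0$ via Fredholm theory, then perturb it analytically to $s$ near $s_0$ using the fact that analytic families of operators between Banach spaces are automatically norm-analytic.

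First I would use the hypothesis to fix an operator $L\in\Lin(\bnchh,\bnch)$ and a compact operator $K$ on $\bnch$ with $L\mu_{s_0}=\Id_{\bnch}-K$. By the Fredholm-theoretic fact recalled just before the lemma, the operator $\Id_{\bnch}-K$ has a left parametrix modulo finite-rank operators: there exist $S\in\Lin(\bnch,\bnch)$ and a finite-rank operator $F$ on $\bnch$ such that $S(\Id_{\bnch}-K)=\Id_{\bnch}-F$, i.e.\
\[
SL\mu_{s_0}=\Id_{\bnch}-F.
\]

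Next, set $T_s:=SL\mu_s\in\Lin(\bnch,\bnch)$. Since $\bnch,\bnchh$ are Banach (hence barrelled), the discussion following Definition~\ref{def: anal} shows that $T_s$ is a norm-analytic family of operators, and in particular $A_s:=T_s-T_{s_0}$ satisfies $A_{s_0}=0$ and $\|A_s\|\to 0$ as $s\to s_0$. On a suitable open neighbourhood $W$ of $s_0$ we have $\|A_s\|<1$, so the Neumann series
\[
B_s:=(\Id_{\bnch}+A_s)^{-1}=\sum_{n\ge 0}(-A_s)^n
\]
converges in operator norm and gives an analytic family of operators on $W$. Then
\[
B_sT_s=B_s\bigl((\Id_{\bnch}-F)+A_s\bigr)=B_s(\Id_{\bnch}+A_s)-B_sF=\Id_{\bnch}-B_sF.
\]

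Finally, if $v\in\bnch$ satisfies $\mu_sv=0$, then $T_sv=0$, so $v=B_sFv$. Let $V:=\Img F$, a finite-dimensional subspace of $\bnch$, and define $\lambda_s:V\to\bnch$ by $\lambda_s(w):=B_sw$; this is an analytic family of operators on $W$. Since $Fv\in V$, we obtain $v\in\Img\lambda_s$, i.e.\ $\Ker\mu_s\subset\Img\lambda_s$ for all $s\in W$, which is exactly the finite-type condition of Definition~\ref{def: anal}.

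The only delicate point is the step from the assumption that $\mu_s$ is an analytic family (pointwise/weak analyticity) to the norm-analyticity of $SL\mu_s$ needed to make the Neumann series converge in operator norm and depend analytically on $s$; this is guaranteed here by the Banach (hence barrelled and semi-complete) hypothesis together with the uniform boundedness principle, as already invoked in the discussion of strongly analytic families.
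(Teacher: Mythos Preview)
Your proof is correct and follows essentially the same route as the paper's: reduce to a finite-rank parametrix $SL\mu_{s_0}=\Id_{\bnch}-F$, write $SL\mu_s=(\Id_{\bnch}+A_s)-F$ with $A_s$ small, invert $\Id_{\bnch}+A_s$ by a Neumann series, and observe that $\Ker\mu_s\subset(\Id_{\bnch}+A_s)^{-1}\Img F$. The paper is terser (it simply says ``for $s$ close to $s_0$ we have $\mu_s=X_s-F$ with $X_s$ invertible and $X_s^{-1}$ holomorphic''), whereas you spell out explicitly why the Neumann series converges in operator norm and depends analytically on $s$; this extra care is justified and relies exactly on the barrelled/uniform-boundedness remark the paper makes earlier.
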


\begin{proof}
By the remark above, $\mu_{s_0}$ is left-invertible modulo finite-rank operators.
Thus, there exists an operator $D:\bnchh\rightarrow \bnch$ such that $F:=\Id_\bnch-D\mu_{s_0}:\bnch\rightarrow \bnch$ is of finite rank.
Replacing $\mu_s$ by $D\mu_s$ we may assume that $\bnchh=\bnch$ and $\mu_{s_0}=\Id_{\bnch}-F$.
For $s$ close to $s_0$ we have $\mu_s = X_s - F$ where $X_s$ is invertible and $X_s^{-1}$ is holomorphic in $s$.
However, it is clear that $\Ker(X_s - F) \subset X_s^{-1}\Img(F)$. Therefore, the solutions of our system are contained in
the holomorphic family $X_s^{-1}(L)$, where $L =\Img(F)$.
\end{proof}

In practice, we will use it in the following way.
We say that a \lctvs{} is \emph{Hilbertian} if it is isomorphic to a Hilbert space.
(That is, we do not care about the inner product itself, only the topology.)

\begin{corollary} \label{cor: fred2}
Let $\bnch,\bnchh$ be Hilbertian \lctvs s and let $\mu_s,\nu_s:\bnch\rightarrow\bnchh$, $s\in\mnfld$ be
two analytic families of operators.
Suppose that for some $s_0\in \mnfld$, $\mu_{s_0}$ is a strict embedding and $\nu_{s_0}$ is a compact operator.
Then, the homogeneous system in $\bnch$ given by
\[
\mu_sv=\nu_sv
\]
is of finite type near $s_0$.
\end{corollary}

Indeed, every strict embedding of Hilbertian spaces is left invertible.

\begin{comment}
We can extend it as follows.

\begin{corollary} \label{cor: fred2}
Let $\bnch$, $\bnchh$, $\mu_s$, $s\in\mnfld$ and $\Xi(s)$ be as above.
Suppose that $\bnch$ admits a direct sum decomposition $\bnch=\bnch_1\dsum \bnch_2$ and let
$p_i:\bnch\rightarrow\bnch_i$, $i=1,2$ be the corresponding projections.
Assume that $\mu_{s_0}\rest_{\bnch_1}$ is left-invertible modulo compact operators
and that the family $p_2(\Sol(\Xi(s)))$ of subsets of $\bnch_2$ is locally (strongly) of finite type.
Then, $\Xi(s)$ is (strongly) of finite type near $s_0$.
\end{corollary}

\begin{proof}
By passing to a neighborhood of $s_0$ and using the second condition, we may assume without loss of generality that
there exist a finite-dimensional space $L$ and an analytic
family $\nu_s$, $s\in\mnfld$ of (injective) operators from $L$ to $\bnch_2$ such that
$\Sol(\Xi(s))\subset \bnch_1\dsum\Img(\nu_s)$ for all $s\in\mnfld$.
Consider the system $\Xi_1(s)$ on $\bnch_1\dsum L$ given by
\[
\mu_s\circ(\Id_{\bnch_1}\dsum\nu_s)v=0.
\]
By assumption, $\mu_{s_0}\circ(\Id_{\bnch_1}\dsum\nu_{s_0})$ is left-invertible modulo compact operators.
Therefore, by the lemma above, $\Xi_1(s)$ is strongly of finite type near $s_0$.
Since $\Sol(\Xi(s))=(\Id_{\bnch_1}\dsum\nu_s)(\Sol\Xi_1(s))$, it follows that $\Xi(s)$ is (strongly) of finite type near $s_0$.
\end{proof}
\end{comment}

\section{The system of linear equations} \label{sec: sketch}
%\subsection{} \label{sec: xi12}
Let us give a brief outline of how we will apply the Principle of Meromorphic Continuation (Theorem \ref{thm: anal})
to prove our main result on meromorphic continuation of Eisenstein series.
Fix $P\in\stds$ and $\varphi\in\AF_P$.
In \S\ref{sec: equations} we devise a certain holomorphic system of linear equations $\Xi(\lambda)$,
$\lambda\in\chars_P$ on $\psi$ in the space $\umd(\autspace)$ of smooth functions of uniform moderate growth on $\autspace$
(see \S\ref{sec: LF} below).
Roughly, the system comprises the following two sets of equations.
\begin{enumerate}[label=($\Xi_{\arabic*}$)]
%\item The $\K$-types of $\psi$ are contained in those of $\varphi$.
\item $\psi$ is an eigenfunction, with a non-zero eigenvalue, of an integral operator
(namely, convolution by a smooth, bi-$\K$-finite, compactly supported function on $G(\A)$,
depending holomorphically on $\lambda$).
%\item The cuspidal support of $\psi$ is prescribed by the cuspidal support of $\varphi$.
%\item The exponents of $\psi$ along any parabolic subgroup are described in terms of the exponents
%of $\varphi$ and $\lambda$ in a simply way.
\item The cuspidal components of $\psi$ along any parabolic subgroup differ from those of $\varphi_\lambda$ by
terms with a prescribed set $A_\lambda$ of cuspidal exponents.
\end{enumerate}

For $\Re\lambda$ dominant and sufficiently regular (depending on $\varphi$),
the Eisenstein series $E(\varphi,\lambda)$ satisfies these equations.
This follows from the computation of the constant term of $E(\varphi,\lambda)$ (Lemma \ref{lem: GL}
and its Corollary) and a result of Harish-Chandra (Lemma \ref{lem: HC}).
Moreover, and this is a crucial point, in this region
the real parts of the elements of $A_\lambda$ are away from the closure of the positive Weyl chamber.
This fact is used in Proposition \ref{prop: unique}, which is the linchpin of the argument, to show that in this region
$E(\varphi,\lambda)$ is the unique automorphic form that satisfies $\Xi_2$.

The remaining issues are to show that any solution $\psi$ of $\Xi(\lambda)$
(for any $\lambda$) is an automorphic form and that the system $\Xi(\lambda)$ is locally of finite type.
In fact, the second point implies the first one (Lemma \ref{lem: autcusp}).
The local finiteness is merely a technical refinement of the results and techniques of Harish-Chandra.
It follows from Theorem \ref{thm: mainfin} which is the technical heart of the paper.
We refer the reader to \S\ref{sec: mainsys} for more details.

To summarize, the system $\Xi(\lambda)$ is locally of finite type and it admits $E(\varphi,\lambda)$ as its unique solution provided
that $\sprod{\Re\lambda}{\alpha^\vee}\gg0$ for all $\alpha\in\srts_P$ (Proposition \ref{prop: Ximain}).
The principle of meromorphic continuation will immediately imply the first part of Theorem \ref{thm: main}.
The other parts are then an easy consequence.

In the function field case the situation is easier (see \S\ref{sec: conclusion2}):
we replace $\umd(\autspace)$ simply by the space of smooth functions on $\autspace$
and use an algebraic version of the principle of meromorphic continuation which does not require local finiteness.
(The equation $\Xi_1$ is automatic in this case.)

\section{An example: \texorpdfstring{$\SL_2$}{SL2}} \label{sec: SL2}
We will illustrate the idea for the classical Eisenstein series for $\Gamma=\SL_2(\Z)$ \cite{MR31519}.
The proof in this case is not fundamentally different from Selberg's second proof of the meromorphic
continuation of Eisenstein series, although the principle of meromorphic continuation allows for a technical simplification in the argument.
In particular, no truncation is necessary.
Let $G=\SL_2(\R)$ and let $\HHH$ be the upper half-plane, with the left $G$-action by M\"obius transformations.
We identify $\HHH$ with $G/K$ where $K=\operatorname{SO}(2)$
and consider the $G$-invariant measure $\mu=\frac{dx\, dy}{y^2}$ on $\HHH$.
Let $\autspace=\Gamma\bs\HHH$. We view functions on $\autspace$ as $\Gamma$-invariant functions
on $\HHH$.
Whenever convergent, denote by $(\cdot,\cdot)_\autspace$ the $G$-invariant sesquilinear form
\[
(f_1,f_2)_{\autspace}=\int_{\autspace}f_1(z)\overline{f_2(z)}\ \mu(z)
\]
on functions on $\autspace$.

Any function on $\HHH$ can be lifted to a right $K$-invariant function on $G$.
Let
\begin{equation} \label{eq: defy}
\y:\HHH\rightarrow\R_{>0},\ \ \y(x+\iii y)=y
\end{equation}
be the imaginary part, considered also as a right $K$-invariant function on $G$.

Denote by $C_c^\infty(G//K)$ the algebra of smooth, bi-$K$-invariant, compactly supported functions on $G$.
This algebra acts on the right on locally $L^1$ functions on $\HHH$.
We denote this action by $f\mapsto\delta(h)f$. We have
\[
\delta(h)\y^{s+\frac12}=\hat h(s)\y^{s+\frac12}
\]
where $\hat h(s)$ is an entire function, which can be computed explicitly. All we need to know is the elementary fact that
for every $s\in\C$ there exists $h\in C_c^\infty(G//K)$ such that $\hat h(s)\ne0$.

For any $N\ge1$ let $\umd^N(\autspace)$ be the space of smooth functions $f$ on $\autspace$ such that
their lift $\tilde f$ to $G$ satisfies
\[
\abs{(\delta(X)\tilde f)(g)}\ll \y(g)^N\text{ when }\y(g)\ge1
\]
for any $X\in\univ(G)$ (acting on the right) where the implied constant depends on $X$.
This is a Fr\'echet space. Let $\umd(\autspace)$ be the union over $N\ge1$ of $\umd^N(\autspace)$ with the inductive
limit topology in the category of \lctvs s. (It is a Hausdorff space.)

Let $\Gamma_\infty=\{\left(\begin{smallmatrix}1&n\\0&1\end{smallmatrix}\right)\mid n\in\Z\}\subset\Gamma$.
We denote by $\cnst f$ the constant term of a $\Gamma_\infty$-invariant function $f$ on $\HHH$, i.e.
\[
\cnst f(y)=\int_{\Z\bs\R}f(x+\iii y)\ dx=\int_0^1f(x+\iii y)\ dx,\ \ y>0.
\]

Consider the Eisenstein series
\[
E(z;s)=\sum_{\gamma\in\pm\Gamma_\infty\bs\Gamma}\y(\gamma z)^{s+\frac12}=
\tfrac12\sum_{(m,n)\in\Z^2\mid\gcd(m,n)=1}\frac{\y(z)^{s+\frac12}}{\abs{mz+n}^{2s+1}}.
\]
The series converges absolutely for $\Re s>\frac12$ and defines a function in $\umd(\autspace)$.

We use three elementary properties of $E(z;s)$.
\begin{claim} \label{claim: 1}
Consider the region $\Re s>\frac12$.
\begin{enumerate}
\item For any $h\in C_c^\infty(G//K)$ we have
\[
\delta(h)E(\cdot;s)=\hat h(s)E(\cdot;s).
\]
\item There exists a holomorphic function $m(s)$ such that
\[
\cnst E(y;s)=y^{s+\frac12}+m(s)y^{-s+\frac12},\ \ \ y>0.
\]
\item We have $(E(\cdot;s),f)_{\autspace}=0$ for any cusp form $f$ on $\autspace$.
\end{enumerate}
\qed
\end{claim}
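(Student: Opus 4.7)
Each of the three assertions can be treated by a short, independent argument, and I would proceed in the order stated.

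For (1), the plan is to apply $\delta(h)$ term-by-term to the defining series for $E(z;s)$. The operator $\delta(h)$ is right-convolution by $h$ on functions lifted to $G$, so it commutes with left translation by any element of $\Gamma$; in particular, $\delta(h)[\y^s\circ\gamma]=(\delta(h)\y^s)\circ\gamma=\hat h(s)\,\y(\gamma\,\cdot)^s$ for each $\gamma\in\Gamma$. Since the series converges absolutely and locally uniformly in $z$ for $\Re s>1$, and since the same bound survives application of any fixed differential operator arising from the compactly supported $h$, I can interchange $\delta(h)$ with the sum. Resumming produces the claimed eigenfunction identity.

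For (2), I would use the standard Bruhat-type decomposition of $\Gamma_\infty\bs\Gamma$: the cosets split into $\{1\}$ (the $m=0$ contribution, modulo sign) and the cosets with $m\ne 0$. The identity coset contributes $\y(z)^s$, whose constant term is $y^s$. For the remainder I would unfold using the right action of $\Gamma_\infty$ on the $(m,n)$-parametrization by $n\mapsto n+m$, converting the single integral $\int_0^1(\cdots)\,dx$ into an integral over all of $\R$. After the change of variable $x\mapsto(x-n)/m$ and collecting the arithmetic factor into a zeta value, one obtains $m(s)\,y^{1-s}$ where $m(s)$ is an explicit ratio of Beta and zeta quantities. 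Holomorphy of $m(s)$ in the half-plane $\Re s>1$ is immediate from the absolute convergence of all ingredients in this region.

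For (3), the plan is the standard unfolding argument. Rapid decay of the cusp form $f$ together with absolute convergence of the Eisenstein series in the region $\Re s>1$ justifies
\[
(E(\cdot;s),f)_{\autspace}=\int_{\Gamma\bs\HHH}\sum_{\gamma\in\Gamma_\infty\bs\Gamma}\y(\gamma z)^s\overline{f(z)}\,\mu(z)=\int_{\Gamma_\infty\bs\HHH}\y(z)^s\overline{f(z)}\,\mu(z).
\]
Taking the strip $\{0\le x<1,\ y>0\}$ as a fundamental domain for $\Gamma_\infty\bs\HHH$ and applying Fubini, the inner $x$-integral is $\overline{\cnst f(y)}$, which vanishes identically by the assumption that $f$ is cuspidal. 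Hence the pairing is zero.

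The main obstacle, if any, is purely bookkeeping: in each part one must justify an interchange (of $\delta(h)$ with the sum in (1), of the $n$-sum with the $x$-integral in (2), of the $\Gamma$-sum with integration against $\overline{f}\,\mu$ in (3)), and the combinatorics of $\Gamma_\infty\bs\Gamma$ in (2) requires a moment's care. None of these is serious given the uniform absolute convergence available for $\Re s>1$ together with rapid decay of cusp forms.
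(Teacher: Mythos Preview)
Your proposal is correct and follows the standard arguments. The paper itself does not give a proof of this claim: it is stated with an immediate \qed{} as ``three elementary properties of $E(z;s)$,'' so there is nothing to compare against beyond noting that your sketches are exactly the classical proofs one would supply.
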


\begin{remark}
In reality,
\[
m(s)=\pi^{\frac12}\frac{\Egam(s)\zeta(2s)}{\Egam(s+\frac12)\zeta(2s+1)}\text{ where }\zeta(s)=\sum_{n=1}^\infty\frac1{n^s}
\ \ \text{and }\Egam(s)=\int_0^\infty e^{-t}t^s\frac{dt}t
\]
but we are of course not entitled to know a priori that $m(s)$ has meromorphic continuation to $\C$.
Rather, this will be a consequence of the meromorphic continuation of $E(z;s)$ and would give a proof of the meromorphic
continuation of the Riemann zeta function which is different in nature from the original one using the Poisson summation formula.
\end{remark}

For any $a>0$ denote by $T_a$ the normalized shift operator on functions on $\R_{>0}$ given by
\[
T_af(y)=a^{-\frac12}f(ay).
\]
These operators pairwise commute.
Consider the holomorphic system $\Xi(s)$, $s\in\C$ on $\psi\in\umd(\autspace)$
given by the following three sets of linear equations:\footnote{In the notation of \S\ref{sec: sketch},
$\Xi_1$ corresponds to \eqref{eq: 1cls} and $\Xi_2$ with respect to the Borel subgroup and $G$ itself
correspond to \eqref{eq: 2csls} and \eqref{eq: 3cls} respectively.}
\begin{subequations}
\begin{gather}
\label{eq: 1cls} \delta(h)\psi=\hat h(s)\psi\ \ \ \forall h\in C_c^\infty(G//K),\\
\label{eq: 2csls} (T_a-a^{-s})(\cnst\psi(y)-y^{s+\frac12})\equiv0\ \ \forall a>0,\\
\label{eq: 3cls} (\psi,f)_{\autspace}=0\ \ \text{for every cusp form $f$ on $\autspace$}.
\end{gather}
\end{subequations}

\begin{claim}
The Eisenstein series $E(z;s)$ is the unique solution of $\Xi(s)$ in the region $\Re s>\frac12$.
\end{claim}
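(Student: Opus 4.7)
The plan is to verify $E(\cdot;s)$ satisfies $\Xi(s)$ from Claim \ref{claim: 1} and then establish uniqueness via an $L^2$-spectral argument. For existence, (\ref{eq: 1cls}) and (\ref{eq: 3cls}) are exactly parts (1) and (3) of the claim, and (\ref{eq: 2csls}) follows from part (2) since $\diff_a^{1-s}$ annihilates $y^{1-s}$: explicitly, $\diff_a^{1-s}(\cnst E(y;s) - y^s) = \diff_a^{1-s}(m(s)\,y^{1-s}) = 0$.

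For uniqueness, let $\psi_1, \psi_2$ be two solutions in $\umd(\autspace)$ and set $\psi = \psi_1 - \psi_2$; it satisfies the homogeneous version of $\Xi(s)$. In particular, (\ref{eq: 2csls}) forces $\cnst\psi(y) = c\,y^{1-s}$ for some $c \in \C$, and (\ref{eq: 1cls}) makes $\psi$ a common eigenfunction of the spherical convolution algebra. By the standard correspondence between that algebra and the bi-$K$-invariant differential operators on $\HHH$, $\psi$ is then an eigenfunction of the hyperbolic Laplacian $\Delta$ with eigenvalue $s(1-s)$. I next claim $\psi \in L^2(\autspace)$: the nonzero Fourier coefficients of $\psi$ along $\Gamma_\infty$ are Whittaker functions in $y$, and moderate growth forces them to be multiples of the exponentially decaying modified Bessel $K$-function; combined with $\cnst\psi(y) = c\,y^{1-s} \to 0$ as $y\to\infty$ (because $\Re(1-s) < 0$), this shows $\psi$ decays rapidly at the cusp and hence lies in $L^2(\autspace)$.

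Integration by parts on the truncation $y \le T$ and letting $T \to \infty$ (boundary contributions vanish by the decay just established) yields
\[
s(1-s)\,\|\psi\|^2 = (\Delta\psi,\psi)_{\autspace} = \int_{\autspace}\bigl(|\partial_x\psi|^2 + |\partial_y\psi|^2\bigr)\,dx\,dy \ge 0,
\]
so $s(1-s) \in \R_{\ge 0}$. Writing $s = \sigma + it$ with $\sigma > 1$, one computes $s(1-s) = \bigl(\sigma(1-\sigma)+t^2\bigr) + it(1-2\sigma)$; the imaginary part vanishes only when $t = 0$, in which case the real part is $\sigma(1-\sigma) < 0$. This is impossible unless $\psi \equiv 0$.

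The main technical obstacle will be converting uniform moderate growth into honest decay at the cusp, via the Whittaker-function analysis of the Fourier expansion, and then justifying the integration by parts on the truncated fundamental domain. I note in passing that condition (\ref{eq: 3cls}) (orthogonality to cusp forms) is actually not used in the argument above — it is redundant for the present claim — and is included in $\Xi(s)$ precisely so that uniqueness persists after meromorphic continuation to the full $\chars$, where $s(1-s)$ can coincide with a cuspidal Laplace eigenvalue on the critical line.
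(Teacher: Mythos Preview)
Your argument is correct, but it differs from the paper's in an instructive way. The paper's proof never invokes \eqref{eq: 1cls}: setting $\psi'=\psi-E(\cdot;s)$, it observes that $\cnst\psi'$ is proportional to $y^{1-s}$, hence bounded for $y\ge\tfrac12$; since $\psi'-\cnst\psi'$ is rapidly decreasing (a general fact about $\umd(\autspace)$), $\psi'$ itself is bounded on $\autspace$. Then $\cnst\psi'$ is bounded for \emph{all} $y>0$, which forces $c=0$ because $y^{1-s}$ blows up as $y\to0$ when $\Re s>1$. Thus $\psi'$ is a cusp form, and \eqref{eq: 3cls} kills it. So the paper uses \eqref{eq: 2csls} and \eqref{eq: 3cls} but not \eqref{eq: 1cls}; you use \eqref{eq: 1cls} and \eqref{eq: 2csls} but not \eqref{eq: 3cls}.

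The trade-off: the paper's route is completely elementary (no Laplacian, no Whittaker expansion, no integration by parts), which matches its stated aim of avoiding spectral input. Your route is also valid and shows that \eqref{eq: 3cls} is dispensable for this particular claim, at the cost of importing the positivity of $\Delta$ on $L^2(\autspace)$ --- exactly the kind of spectral fact the paper is trying to sidestep. Two small corrections: your phrase ``$\psi$ decays rapidly at the cusp'' overstates things (the constant term $cy^{1-s}$ decays only polynomially), though $\psi\in L^2$ is all you actually need; and your closing speculation about why \eqref{eq: 3cls} is present is backwards --- in the paper's scheme it is \eqref{eq: 3cls} that carries the uniqueness and \eqref{eq: 1cls} that is held in reserve for local finiteness.
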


\begin{proof}
By Claim \ref{claim: 1}, $E(z;s)$ satisfies $\Xi(s)$ whenever $\Re s>\frac12$.
Conversely, suppose that $\psi\in\Sol(\Xi(s))$ and $\Re s>\frac12$. We need to show that $\psi=E(z;s)$.
Consider $\psi'=\psi-E(z;s)$.
By \eqref{eq: 2csls}, the constant term $\cnst\psi'$ is proportional to $\y^{\frac12-s}$.
In particular, it is bounded for $\y\ge\frac12$, say. On the other hand, for any $f\in\umd(\autspace)$
the function $f-\cnst f$ is bounded (and in fact, rapidly decreasing in $\y$) for $\y\ge\frac12$.
Therefore, $\psi'$ is bounded on $\y\ge\frac12$, and hence on $\autspace$ (since it is $\Gamma$-invariant).
This implies that $\cnst\psi'$ is bounded (not just for $\y\ge\frac12$).
Since $\cnst\psi'$ is proportional to $\y^{\frac12-s}$ and $\Re s>\frac12$ this means that $\cnst\psi'\equiv0$.
Thus, $\psi'$ is a cusp form and by \eqref{eq: 3cls}, $\psi'\equiv0$, i.e., $\psi=E(z;s)$.
(Note that we didn't use the equations \eqref{eq: 1cls} for this argument.)
\end{proof}

In order to apply the principle of meromorphic continuation (Theorem \ref{thm: anal}), it remains to prove the following.
\begin{claim} \label{claim: 2}
The system $\Xi(s)$ is locally of finite type.
\end{claim}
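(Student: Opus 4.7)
The plan is to apply Fredholm's criterion (Corollary \ref{cor: fred2}) after recasting $\Xi(s)$ in a Banach space. First, I would exploit the constant term equation \eqref{eq: 2csls}: the only smooth solutions of $\diff_a^{1-s} g \equiv 0$ (as $a$ ranges over $\R_{>0}$) are the scalar multiples of $y^{1-s}$, so \eqref{eq: 2csls} forces $\cnst\psi(y) = y^s + c(\psi)\, y^{1-s}$ for a unique $c(\psi)\in\C$. Fix a smooth cutoff $\chi:\R_{>0}\to[0,1]$ equal to $0$ on $(0,Y_0]$ and $1$ on $[Y_1,\infty)$, with $Y_0$ large enough that $\{x+\iii y : |x|\le 1/2,\ y\ge Y_0\}\hookrightarrow\autspace$ is injective; then $\phi_s^{(1)}(z) := \chi(y(z))\,y(z)^s$ and $\phi_s^{(2)}(z) := \chi(y(z))\,y(z)^{1-s}$ are holomorphic families of elements of $\umd(\autspace)$.

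Next, for any $\psi\in\Sol(\Xi(s))$, the function $\psi_0 := \psi - \phi_s^{(1)} - c(\psi)\phi_s^{(2)}$ has constant term $(1-\chi(y))(y^s + c(\psi)y^{1-s})$, which is compactly supported in $y$. Since $\psi_0 - \cnst\psi_0$ decays rapidly at the cusp (as for any $\umd$-function), $\psi_0$ itself is bounded with rapid decay at infinity, so it lies in a fixed Banach space $\bnch_1$ of smooth functions on $\autspace$ equipped with a suitable norm controlling rapid decay together with finitely many derivatives. The analytic family of operators
\[
\alpha_s: \bnch_1\oplus\C \to \umd(\autspace),\qquad (\psi_0,c)\mapsto \psi_0 + \phi_s^{(1)} + c\phi_s^{(2)},
\]
therefore satisfies $\Sol(\Xi(s))\subset\Img\alpha_s$, and it suffices to show that the pulled-back system $\alpha_s^{-1}(\Sol\Xi(s))$ in $\bnch := \bnch_1\oplus\C$ is locally of finite type, since this transports through $\alpha_s$.

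Now pick $h\in C_c^\infty(G//K)$ with $\hat h(s_0)\ne 0$ (possible by hypothesis), so $\hat h(s)\ne 0$ on a neighborhood of $s_0$. Pulling back \eqref{eq: 1cls} for this single $h$ via $\alpha_s$ yields
\[
(\hat h(s)\,\Id - \delta(h))\psi_0 = (\delta(h) - \hat h(s)\,\Id)(\phi_s^{(1)} + c\phi_s^{(2)}),
\]
whose right-hand side is compactly supported in $\autspace$, analytic in $s$, and linear in $c$, because $y^s$ and $y^{1-s}$ are exact $\delta(h)$-eigenfunctions away from the support of $\chi'$. Applying Corollary \ref{cor: fred2} with the decomposition $\bnch = \bnch_1\oplus\C$, the projection of the solution set to $\C$ is trivially of finite type, so the corollary reduces the task to showing that $\hat h(s_0)\,\Id - \delta(h)$ is left-invertible modulo compact operators on $\bnch_1$.

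The hard part will be exactly this last point, which one expects to follow from an Arzela-Ascoli-type compactness argument for $\delta(h)$: convolution by $h\in C_c^\infty(G//K)$ smooths functions, preserves rapid decay at the cusp (since $\supp h$ is compact), and produces uniformly bounded higher-order derivatives, so on an appropriately weighted Banach space the image of a bounded set is equicontinuous and uniformly decaying, hence relatively compact. This is the Fredholm input substantiating local finiteness; all other ingredients are formal. Note that neither \eqref{eq: 3cls} nor \eqref{eq: 1cls} for $h$'s other than our chosen one enters the finiteness argument -- both are used only for the preceding uniqueness claim.
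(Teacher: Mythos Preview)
Your overall strategy---reduce to a Banach space and invoke Corollary~\ref{cor: fred2}---is sound, and you are right that neither \eqref{eq: 3cls} nor the full family of $h$'s in \eqref{eq: 1cls} is needed for finiteness. The gap is in the Fredholm input. The operator $\delta(h)$ is \emph{not} compact on a Banach space $\bnch_1$ of rapidly decreasing functions on $\autspace$, and $\hat h(s_0)\Id-\delta(h)$ is not left-invertible modulo compacts there. Your Arzel\`a--Ascoli heuristic fails because $\delta(h)$ does not \emph{improve} decay at the cusp: if $|f|\le C\,y^{-N}$ then $|\delta(h)f|\le C'\,y^{-N}$ with the same $N$, and a uniform bound of this type gives no tightness in the $y^N$-weighted norm. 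Concretely, bump functions of $y$ alone sliding up the cusp (suitably normalized) form a bounded sequence in $\bnch_1$ whose images under $\delta(h)$ have no convergent subsequence. The Gelfand--Piatetski-Shapiro compactness holds only on the \emph{cuspidal} subspace (vanishing constant term), where smoothness produced by $\delta(h)$ genuinely forces extra decay.

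The paper's proof repairs exactly this point. It passes from $\autspace$ to the Siegel domain $\zspace_c=\Gamma_\infty\backslash\{\y>c\}$, where the constant term map is an honest orthogonal projection on a weighted $L^2$ space $\funct_N(\zspace_c)$. There one applies Corollary~\ref{cor: fred2} with $\bnch_1=\funct_N^{\cusp}(\zspace_c)$ (on which the analogue of $\delta(h)$ is Hilbert--Schmidt) and $\bnch_2$ the constant-term direction, whose intersection with the solution set is confined to the two-dimensional holomorphic family spanned by $\y^s$ and $(\y^s-\y^{1-s})/(2s-1)$. Your subtraction of $\phi_s^{(1)}+c\phi_s^{(2)}$ morally does the same bookkeeping, but because the resulting $\psi_0$ still has nonzero (albeit compactly supported) constant term and lives on $\autspace$ rather than $\zspace_c$, you cannot cleanly separate the cuspidal direction inside a fixed Banach space; passing to $\zspace_c$ is what makes the decomposition and the compactness work.
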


We sketch the proof.
The idea is to pass to an auxiliary system on a Hilbert space and to replace the complicated space
$\autspace$ by a simpler one which approximates it at the cusp.

Let $\zspace=\Gamma_\infty\bs\HHH$.
The function $\y$ of \eqref{eq: defy} descends to a function $\zspace\rightarrow\R_{>0}$
which we also denote by $\y$.
For any $c>0$ consider the inverse image $\zspace^c$  of $(c,\infty)$ under $\y$ in $\zspace$.
The restriction of the projection map $\proj:\zspace\rightarrow\autspace$ to $\zspace^c$ is finite-to-one.
It is surjective if $c$ is sufficiently small and injective if $c$ is sufficiently large (in fact for $c>1$).
Fix $c_0$ such that $\proj\rest_{\zspace^{c_0}}$ is surjective.

For $N>0$ consider the Hilbert space $\funct^N(\zspace)=L^2(\zspace;\y^{-2N}\mu)$
and its quotient $\funct^N(\zspace^{c_0})=L^2(\zspace^{c_0};\y^{-2N}\mu)$.
For any function $f$ on $\autspace$ denote by $f^{c_0}=f\circ\proj\rest_{\zspace^{c_0}}$ its pullback to $\zspace^{c_0}$.
The inverse image $\funct^N(\autspace)$ of $\funct^N(\zspace^{c_0})$ under $f\mapsto f^{c_0}$ is a Hilbertian space of functions on $\autspace$
which is independent of the choice of $c_0$. We thus have a strict embedding
\[
\funct^N(\autspace)\rightarrow\funct^N(\zspace^{c_0}), \ \ f\mapsto f^{c_0}.
\]
On the other hand, the constant term
%\[
%\int_{\Z\bs\R}f(x+\cdot)\ dx
%\]
defines an orthogonal projection $\CT$ of $\funct^N(\zspace^{c_0})$ whose image is the space of functions $\funct^N_{\y}(\zspace^{c_0})$ that factor through $\y$.
Thus, we have an orthogonal decomposition
\[
\funct^N(\zspace^{c_0})=\funct^N_{\cusp}(\zspace^{c_0})\oplus\funct^N_{\y}(\zspace^{c_0})\text{ where }\funct^N_{\cusp}(\zspace^{c_0})=\Ker\CT.
\]
For any function $f$ on $\autspace$ we write $f^{c_0}=f^{c_0}_{\cusp}+f^{c_0}_{\y}$ with respect to this decomposition.
Fix $a>1$. We consider the operator $T_a$ on $\funct^N_{\y}(\zspace^{c_0})$.
Let $\zspace^{c_0,c_0a^2}\subset\zspace^{c_0}$ be the rectangle $c_0<y\le c_0a^2$.
Let $h\in C_c^\infty(G//K)$ and let $c(s)$ be an entire function.
Consider the holomorphic system $\tilde\Xi^N(s)$ of linear equations on $f\in\funct^N(\autspace)$
\begin{align*}
c(s)f^{c_0}_{\cusp}&=(\delta(h)f)^{c_0}_{\cusp}\\
c(s)f^{c_0}_{\y}\rest_{\zspace^{c_0,c_0a^2}}&=(\delta(h)f)^{c_0}_{\y}\rest_{\zspace^{c_0,c_0a^2}}\\
(T_a-a^s)(T_a-a^{-s})(f^{c_0}_{\y})&=0.
\end{align*}

\begin{claim} \label{claim: 3}
The system $\tilde\Xi^N(s)$ is of Fredholm type for $\abs{s}<N$ and $c(s)\ne0$, and hence it is locally of finite type.
More precisely,
\begin{enumerate}
\item The operator
\begin{align*}
\funct^N(\autspace)&\rightarrow\funct^N_{\cusp}(\zspace^{c_0})\oplus\funct^N_{\y}(\zspace^{c_0})\oplus\funct^N_{\y}(\zspace^{c_0,c_0a^2})\\
f&\mapsto (f^{c_0}_{\cusp},(T_a-a^s)(T_a-a^{-s})(f^{c_0}_{\y}),f^{c_0}_{\y}\rest_{\zspace^{c_0,c_0a^2}})
\end{align*}
is a strict embedding.
\item The operator
\[
\funct^N(\autspace)\rightarrow\funct^N_{\cusp}(\zspace^{c_0}),\ \    f\mapsto(\delta(h)f)^{c_0}_{\cusp}
\]
is compact (and in fact Hilbert--Schmidt).
\end{enumerate}
\end{claim}

\begin{proof}
The second statement is well known (cf.\ \cite{MR1482800}*{Theorem 9.5}).

For the first one, the operator in question is the composition of the strict embedding
\[
\funct^N(\autspace)\rightarrow\funct^N_{\cusp}(\zspace^{c_0})\oplus\funct^N_{\y}(\zspace^{c_0}),\ \ f\mapsto (f^{c_0}_{\cusp},f^{c_0}_{\y})
\]
with the operator
\begin{align*}
\funct^N_{\y}(\zspace^{c_0})&\rightarrow\funct^N_{\y}(\zspace^{c_0})\oplus\funct^N_{\y}(\zspace^{c_0,c_0a^2})\\
f&\mapsto ((T_a-a^s)(T_a-a^{-s})f,f\rest_{\zspace^{c_0,c_0a^2}}).
\end{align*}
It is easy to show that the latter is a strict embedding for $\abs{s}<N$.
This boils down to the elementary fact that the operator
\[
L^2(\R_+,e^{-2rx}\ dx)\rightarrow L^2(\R_+,e^{-2rx}\ dx)\oplus L^2([0,1]),\ \ f\mapsto (f(x+1)-\lambda f(x),f\rest_{[0,1]})
\]
is a strict embedding provided that $e^r>\abs{\lambda}$.
\end{proof}

We can now complete the proof of Claim \ref{claim: 2}.

Let $s_0\in\C$. Choose $h\in C_c^\infty(G//K)$ such that $\hat h(s_0)\ne0$.
Suppose that $f$ is a solution of $\Xi(s)$.
Then, the constant term $\cnst f$ satisfies
\[
(T_a-a^s)(T_a-a^{-s})f=0.
\]
Since $f-\cnst f$ is rapidly decreasing, this implies that $f\in\funct^N(\autspace)$ provided that $N>\abs{s}$,
Thus, $f$ is a solution of $\tilde\Xi^N(s)$ with respect to $c(s)=\hat h(s)$.
By Claim \ref{claim: 3} there exists an open neighborhood $U$ os $s_0$, a finite-dimensional vector space
$L$ and a holomorphic family of injective linear maps $\tilde\lambda_s:L\rightarrow\funct^N(\autspace)$ such that
$\Img(\tilde\lambda_s)\supset(\Sol(\Xi(s)))$ for all $s\in U$.

By a standard result (cf.\ \cite{MR1482800}*{Proposition 5.7}), $\delta(h)$ is continuous from $\funct^N(\autspace)$ to $\umd^{N'}(\autspace)$
provided that $N'$ is sufficiently large (depending on $N$).
We deduce that $\lambda_s:=\delta(h)\tilde\lambda_s$ is a holomorphic system of
operators $L\rightarrow\umd^{N'}(\autspace)$ and $\Sol(\Xi(s))\subset\Img(\lambda_s)$ for all $s\in U$.
Note that $\lambda_{s_0}$ may not necessarily be injective at our given point $s_0$.
However, let $u\in C_c^\infty(G//K)$ and let $v_s=h+\hat h(s)u-u*h=h+u*(\hat h(s)\one_K-h)$.
Then, $\delta(v_s)f=\hat h(s)f$ whenever $\delta(h)f=\hat h(s)f$, so that
$\Sol(\Xi(s))\subset\Img(\delta(v_s)\tilde\lambda_s)$. If we take $u$ to be non-negative, supported near $K$
and of total mass $1$, then $\delta(u)$ acts approximately as the identity on the finite-dimensional space
$\Img((\hat h(s_0)I-\delta(h))\tilde\lambda_{s_0})$. This will ensure that $\delta(v_s)\tilde\lambda_s$
is close to $\hat h(s)\tilde\lambda_s$, and hence is injective near $s_0$.
Thus, $\Xi(s)$ is locally of finite type.

Note that the equations \eqref{eq: 3cls} are superfluous for the local finiteness.

\begin{remark}
Grosso modo, the general case follows the same pattern, except that there are more parabolic subgroups
and the constant terms are more complicated.
\end{remark}

\section{Uniqueness} \label{sec: unique}

In this section we show that any automorphic form is determined by its cuspidal components pertaining to the unnormalized cuspidal
exponents whose real parts are in the closure of the positive Weyl chamber. (See Theorem \ref{thm: lead} for the precise statement.)
This will give a simple characterization (which can serve as an alternative definition) of the Eisenstein series $E(\varphi,\lambda)$
with $\Re\lambda$ dominant and sufficiently regular (Proposition \ref{prop: unique}).

\subsection{Roots and coroots} (\cite{MR1361168}*{I.1.6})

Let $S_0$ \index{S0@$S_0$, $S_M$} be the maximal $F$-split torus in the center of $M_0$.
More generally, for any $P=M\ltimes U\in\stds$ let $S_M\subset S_0$ be the maximal split torus of $Z_M$,
so that $S_0=S_{M_0}$. Thus, $M=C_G(S_M)$,
\[
\aaa_P^*=X^*(S_M)\otimes\R
\]
and
\[
\aaa_P=X_*(S_M)\otimes\R
\]
where $X_*(\cdot)$ \index{X*@$X_*(\cdot)$} is the lattice of co-characters defined over $F$.
We also write $(\aaa_0^P)^*=X^*(S_0^M)\otimes\R$ and the dual space
$\aaa_0^P=X_*(S_0^M)\otimes\R$ where \index{aaa0P@$\aaa_0^P$} \index{S0M@$S_0^M$} \index{Mder@$M^{\der}$}
$S_0^M=S_0\cap M^{\der}$, a maximal split torus in the derived group $M^{\der}$ of $M$.
%\Erez{Always connected?}
We have direct sum decompositions
\[
\aaa_0=\aaa_0^P\oplus\aaa_P,\ \ \aaa_0^*=(\aaa_0^P)^*\oplus\aaa_P^*.
\]
For any $\lambda\in\aaa_0^*$ we write  \index{lambda0P@$\lambda_0^P$, $\lambda_P$}
$\lambda=\lambda_0^P+\lambda_P$ according to this decomposition.
More generally, for any $P\subset Q$ we have a decomposition
\[
\aaa_P=\aaa_P^Q\oplus\aaa_Q,\ \ \aaa_P^*=(\aaa_P^Q)^*\oplus\aaa_Q^*
\]
where $\aaa_P^Q=\aaa_P\cap\aaa_0^Q$. Accordingly, we write $\lambda=\lambda_P^Q+\lambda_Q$
for any $\lambda\in\aaa_P^*$.

Let $\srts_0\subset X^*(S_0)\subset\aaa_0^*$ \index{Daelta@$\srts_0$, $\srts_0^P$, $\srts_P$} be the set of simple roots of $S_0$ on $U_0$
and let $\srts_0^\vee\subset\aaa_0$ be the set of simple coroots.
Thus, $\srts_0$ is a basis for the vector space $(\aaa_0^G)^*$ and $\srts_0^\vee$ is a basis for $\aaa_0^G$.
For any $\alpha\in\srts_0$ denote by $\alpha^\vee\in\srts_0^\vee$ the corresponding simple coroot.
For any $P$ let $\srts_0^P\subset\srts_0$ \index{Daeltav@$\srts_0^\vee$, $\srts_P^\vee$} be the set of simple roots of $S_0$ on $U_0\cap M=U_0\cap M^{\der}$.
(Thus, $\srts_0^P$ is a basis for $(\aaa_0^P)^*$.)
Denote by $\srts_P\subset\aaa_P^*$ the image of $\srts_0\setminus\srts_0^P$ under the projection
$\aaa_0^*\rightarrow\aaa_P^*$. (This defines a bijection between $\srts_0\setminus\srts_0^P$ and $\srts_P$.)
Similarly, $\srts_P^\vee$ is the image of $\srts_0^\vee\setminus(\srts_0^P)^\vee$ under the projection
$\aaa_0\rightarrow\aaa_P$. We continue to denote by $\alpha\mapsto\alpha^\vee$ the ensuing bijection
$\srts_P\rightarrow\srts_P^\vee$.

More generally, we denote by $\Phi_P\subset X^*(S_M)\subset\aaa_P^*$ \index{Phi@$\Phi_P$} the set (containing $\srts_P$)
of indivisible roots of $S_M$ on the Lie algebra of $U$.
For any $\alpha\in\Phi_P$ denote by $\alpha^\vee\in\aaa_P$ the corresponding coroot (\cite{MR1361168}*{I.1.11}).

The following result is well known. For convenience we include a proof.
\begin{lemma} \label{lem: alpalp}
Let $P\in\stds$ and $\alpha\in\srts_P$. Then,
\begin{enumerate}
\item All the coefficients of $\alpha$ with the respect to the basis $\srts_0$ are non-negative.
\item $\sprod{\alpha}{\alpha^\vee}>0$.
\end{enumerate}
\end{lemma}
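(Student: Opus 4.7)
The plan is to fix an inner product $(\cdot,\cdot)$ on $\aaa_0^*$ invariant under the reflections $s_\beta\colon\lambda\mapsto\lambda-\sprod{\lambda}{\beta^\vee}\beta$ for $\beta\in\srts_0^P$, and to exploit the resulting orthogonality of the decompositions $\aaa_0^*=(\aaa_0^P)^*\oplus\aaa_P^*$ and $\aaa_0=\aaa_0^P\oplus\aaa_P$. Each $s_\beta$ fixes $\aaa_P^*$ pointwise and stabilizes $(\aaa_0^P)^*=\operatorname{span}(\srts_0^P)$, which contains no non-zero vector fixed by all of them (the associated Cartan matrix being non-degenerate); hence $(\aaa_0^P)^*$ is the orthogonal complement of $\aaa_P^*$, and analogously on $\aaa_0$. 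I then pick the unique $\tilde\alpha\in\srts_0\setminus\srts_0^P$ projecting to $\alpha$ and write $\tilde\alpha=\tilde\alpha_0^P+\alpha$ with $\tilde\alpha_0^P=\sum_{\beta\in\srts_0^P}c_\beta\beta$.

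For part (1) I will show that every $c_\beta\le 0$; this immediately yields non-negative $\srts_0$-coefficients for $\alpha=\tilde\alpha-\sum_{\beta}c_\beta\beta$. For any $\gamma\in\srts_0^P$, orthogonality gives $(\tilde\alpha_0^P,\gamma)=(\tilde\alpha,\gamma)\le 0$, the inequality holding since $\tilde\alpha\ne\gamma$ are distinct simple roots in $\srts_0$. So $v:=\tilde\alpha_0^P$ lies in the span of $\srts_0^P$ and has non-positive inner product with every $\gamma\in\srts_0^P$. The standard positivity argument now applies: write $v=v_+-v_-$ with $v_\pm$ non-negative combinations over disjoint subsets of $\srts_0^P$; then $(v_+,v_-)\le 0$ (distinct simple roots pair non-positively), while $(v_+,v)\le 0$ by hypothesis, so
\[
0\le(v_+,v_+)=(v_+,v)+(v_+,v_-)\le 0,
\]
forcing $v_+=0$, whence $c_\beta\le 0$ for all $\beta$.

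For part (2), I will compute $\sprod{\alpha}{\alpha^\vee}$ directly in terms of the inner product. Writing $H_\lambda\in\aaa_0$ for the element dual to $\lambda\in\aaa_0^*$, one has $\tilde\alpha^\vee=\tfrac{2}{(\tilde\alpha,\tilde\alpha)}H_{\tilde\alpha}$. Because $\aaa_0=\aaa_0^P\oplus\aaa_P$ is orthogonal, the projection $\aaa_0\to\aaa_P$ defining $\alpha^\vee$ coincides with the orthogonal projection, so $\alpha^\vee=\tfrac{2}{(\tilde\alpha,\tilde\alpha)}H_\alpha$. Hence
\[
\sprod{\alpha}{\alpha^\vee}=\frac{2(\alpha,\alpha)}{(\tilde\alpha,\tilde\alpha)},
\]
which is strictly positive since $\tilde\alpha\notin(\aaa_0^P)^*$ forces $\alpha\ne 0$. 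The main step to get right is the orthogonality of the two decompositions, as both parts rest on it; once that is in place, the rest is elementary inner-product manipulation together with the classical positivity trick for root systems used in part (1).
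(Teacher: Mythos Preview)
Your argument is correct modulo one small point: for part (2) you invoke $\tilde\alpha^\vee=\tfrac{2}{(\tilde\alpha,\tilde\alpha)}H_{\tilde\alpha}$, which requires the inner product to be invariant under $s_{\tilde\alpha}$, not just under the $s_\beta$ with $\beta\in\srts_0^P$. Simply choose the inner product to be $\Weyl$-invariant from the start; everything else goes through unchanged. (Part (1) as written genuinely only uses $\Weyl^P$-invariance, via $(\lambda,\gamma)=\tfrac{(\gamma,\gamma)}{2}\sprod{\lambda}{\gamma^\vee}$ for $\gamma\in\srts_0^P$.)

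For part (1) your route and the paper's are the same: both show that $\alpha-\tilde\alpha\in(\aaa_0^P)^*$ pairs non-negatively with every $\alpha_0^\vee$, $\alpha_0\in\srts_0^P$, and hence lies in the non-negative span of $\srts_0^P$. You spell out the standard $v_+,v_-$ proof of this last implication; the paper just cites it. For part (2) the approaches differ. The paper stays with the canonical pairing: since $\alpha\in\aaa_P^*$ annihilates $\aaa_0^P$, one has $\sprod{\alpha}{\alpha^\vee}=\sprod{\alpha}{\tilde\alpha^\vee}$, and if this were $\le 0$ then (after reducing to $P$ maximal) $\alpha$ would pair non-positively with every simple coroot, forcing all its $\srts_0$-coefficients to be non-positive and contradicting part (1). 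Your direct computation $\sprod{\alpha}{\alpha^\vee}=2(\alpha,\alpha)/(\tilde\alpha,\tilde\alpha)>0$ is shorter once the orthogonality is in place, at the cost of the auxiliary inner product; the paper's advantage is that it never leaves the duality pairing already in use.
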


\begin{proof}
Let $\beta\in\srts_0\setminus\srts_0^P$ be the simple root that projects to $\alpha$.
Write $\alpha=\beta+\gamma$ where $\gamma\in(\aaa_0^P)^*$. Then,
\[
\sprod{\gamma}{\alpha_0^\vee}=-\sprod{\beta}{\alpha_0^\vee}\ge0
\]
for any $\alpha_0\in\srts_0^P$. Thus, $\gamma$ is in the closed positive Weyl chamber of $(\aaa_0^P)^*$
and hence it has non-negative coefficients with respect to the basis $\srts_0^P$.
This proves the first part.
For the second part, upon replacing $G$ by the Levi subgroup of $Q$ where $\srts_0^Q=\srts_0^P\cup\{\beta\}$,
we may assume without loss of generality that $P$ is maximal.
Note that $\sprod{\alpha}{\alpha^\vee}=\sprod{\alpha}{\beta^\vee}$.
Assume on the contrary that $\sprod{\alpha}{\beta^\vee}\le0$. Since $\sprod{\alpha}{\alpha_0^\vee}=0$
for all $\alpha_0\in\srts_0^P$ and $P$ is maximal, we would conclude that $\alpha$ is in the closed negative Weyl chamber,
and in particular its coefficients with respect to $\srts_0$ are non-positive.
This contradicts the fact that the $\beta$-coefficient of $\alpha$ is $1$.
\end{proof}

We extend the homomorphism $\Ht_M:M(\A)\rightarrow\aaa_M$ to a left-$U(\A)$ right-$\K$-invariant function $\Ht_P$ on $G(\A)$.
Thus, $\Ht_P=\Ht_M\circ\m_P$.
We also write $\Ht_0=\Ht_{P_0}$.  \index{HP@$\Ht_P$} \index{H0@$\Ht_0$}

\subsection{Cuspidal exponents} \label{sec: cuspexp}

%Recall that $S_0$ is the maximal $F$-split torus in the center of $M_0$.

In the number field case, let $\AAA_0=S_0(\R)^0$ be the connected component of the identity (in the real topology)
of $S_0(\R)$ viewed as a subgroup of $S_0(\A)$ by embedding $\R$ in $\A_F$ via
$\R\hookrightarrow\A_{\Q}\hookrightarrow\A_{\Q}\otimes F=\A_F$.
In the function field case, fix once and for all a place $v_0$ of $F$ and a uniformizer $\varpi$ of $F_{v_0}$
and let $\AAA_0$ be the image of $(\varpi^{\Z})^d$ in $S_0(F_{v_0})$
where we identify $S_0$ with $\mathbb{G}_m^d$ (over $F$) and $d=\dim S_0$ (\cite{MR1361168}*{I.2.1}).
Then, $\AAA_0$ is a $\Weyl$-invariant lattice of rank $d$.

Now let $P\in\stds$.
We set $\AAA_P=\AAA_0\cap S_M(\A)$ and $\AAA_P^Q=\AAA_P\cap M_Q^{\der}(\A)=\AAA_P\cap M_Q(\A)^1$ for any $Q\supset P$. \index{AAP@$\AAA_P$}
Let $\charsA_P$ be the group of quasi-characters of $\AAA_P$. \index{XPtilde@$\charsA_P$}
(We will use additive notation for this group.)
The restriction to $\AAA_P$ defines a surjective map
\begin{equation} \label{eq: restA}
\chars_P\rightarrow\charsA_P.
\end{equation}
In the number field case, this map is an isomorphism and we have
\[
M(\A)=\AAA_P\times M(\A)^1,
\]
i.e, the restriction $\Ht_P\rest_{\AAA_P}:\AAA_P\rightarrow\aaa_P$ is an isomorphism of topological groups.
In the function field case, the subgroup $\Ht_P(\AAA_P)\subset\Ht_P(M(\A))$ is of finite index, hence also a lattice in $\aaa_P$
and the map \eqref{eq: restA} is an algebraic covering map.

In both cases the map $\Re:\chars_P\rightarrow\aaa_P^*$ factors through \eqref{eq: restA},
so that $\Re:\charsA_P\rightarrow\aaa_P^*$ is well defined.

For $P\subset Q$ we denote by $\lambda_P$ (resp., $\lambda_P^Q$) the image of $\lambda\in\charsA_0$ under the restriction map
$\charsA_0\rightarrow\charsA_P$ (resp., $\charsA_0\rightarrow\charsA_P^Q$).
This is consistent with the previous notation since $\Re\lambda_P=(\Re\lambda)_P$.

The space $\AF_P$ admits a left action by $\AAA_P$.
It is advantageous however to consider the twisted action given by \index{acdotphi@$a\cdot\phi$}
\begin{equation} \label{eq: ta}
a\cdot\phi(g)=\modulus_P(a)^{-\frac12}\phi(ag).
\end{equation}
We decompose $\AF_P$ according to this action (\cite{MR1361168}*{I.3.2}). Namely, we write
\begin{equation} \label{eq: AFd}
\AF_P=\dsum_{\lambda\in\charsA_P}\AF_{P,\lambda}
\end{equation}
where $\AF_{P,\lambda}$ \index{AFP@$\AF_{P,\lambda}$} is the $\lambda$-generalized eigenspace of $\AF_P$ with respect to
the twisted $\AAA_P$-action.
Thus, for every $\varphi\in\AF_{P,\lambda}$ there exists $n\ge0$ such that for every $g\in G(\A)$ the function
$a\in\AAA_P\mapsto (a\cdot\varphi)(g)a^{-\lambda}$ is a polynomial in
$\Ht_P(a)\in\aaa_P$ of degree $\le n$.

The determinant of the adjoint representation of $P$ on its Lie algebra
is an element of $X^*(P)$, which we write as $2\rho_P$ where $\rho_P\in\aaa_P^*$. \index{rho@$\rho_P$}

One of the reasons to the above normalization is that if $\phi\in\AF_{P,\mu}$ where $\mu\in\charsA_P$ then for any $w\in\Weyl(P,Q)$
we have
\begin{equation} \label{eq: expMw}
M(w,\lambda)\phi\in\AF_{Q,w\mu}
\end{equation}
whenever $M(w,\lambda)\phi$ is well-defined.
This follows from the fact that for any $m\in M(\A)$ and $g\in G(\A)$ we have
\begin{equation} \label{eq: monm}
\begin{gathered}
\modulus_Q(wmw^{-1})^{-\frac12}(M(w,\lambda)\varphi)(wmw^{-1}g)=\\
\m_Q(g)^{-w\lambda}\int_{(U_Q\cap wU_Pw^{-1})(\A)\bs U_Q(\A)}
\modulus_P(m)^{-\frac12}\varphi(mw^{-1}ug)\m_P(w^{-1}ug)^{\lambda}\ du,
\end{gathered}
\end{equation}
since the modulus character of $M_Q$ on $(wU_Pw^{-1}\cap U_Q)(\A)\bs U_Q(\A)$ corresponds to $\rho_Q-w\rho_P\in\aaa_Q^*$.
(This can be seen by decomposing $w$ as a product of elementary symmetries \cite{MR1361168}*{I.1.8, II.1.6}.)

For any $Q\in\stds$ with $Q\subset P$, the constant term \index{cnstPphi@$\cnst_{P,Q}\phi$}
\[
\cnst_{P,Q}\phi(g)=\int_{U_Q(F)\bs U_Q(\A)}\phi(ug)\ du
\]
defines a linear map
\[
\AF_P\rightarrow\AF_Q.
\]
We have $\cnst_{P,Q}(\cnst_{G,P}\phi)=\cnst_{G,Q}(\phi)$ for any $\phi\in\AF_G$.
For consistency, it will be useful to set $\cnst_{P,Q}\phi=0$ if $Q\not\subset P$.

We denote by $\AF_P^{\cusp}$ \index{AFPcusp@$\AF_P^{\cusp}$} the cuspidal part of $\AF_P$, i.e., the space of $\phi\in\AF_P$
such that $\cnst_{P,Q}\phi=0$ for all $Q\subsetneq P$. We have a decomposition
\begin{equation} \label{eq: AFd2}
\AF_P^{\cusp}=\dsum_{\lambda\in\charsA_P}\AF_{P,\lambda}^{\cusp}
\end{equation}
where $\AF_{P,\lambda}^{\cusp}=\AF_{P,\lambda}\cap \AF_P^{\cusp}$.
We also have a linear projection (see \cite{MR1361168}*{I.3.5}) \index{phicusp@$\phi^{\cusp}$}
\[
\phi\in\AF_P\mapsto\phi^{\cusp}\in\AF_P^{\cusp}
\]
(the \emph{cuspidal projection} of $\phi$) characterized by the equalities
\[
(\phi^{\cusp},\psi)_{\autspace_P}=(\phi,\psi)_{\autspace_P}
\]
for any function $\psi$ on $\autspace_P$ (necessarily rapidly decreasing) of the form $(f\circ\Ht_P) \cdot\psi'$
where $f\in C_c^\infty(\aaa_P)$ and $\psi'\in\AF_P^{\cusp}$. Here,
\begin{equation} \label{eq: innerP}
(f_1,f_2)_{\autspace_P}=\int_{\autspace_P}f_1(g)\overline{f_2(g)}\ dg
\end{equation}
whenever the integral is absolutely convergent. Equivalently, for any cusp form $\psi$ on $\autspace_M=M(F)\bs M(\A)$
we have
\begin{equation} \label{eq: altc}
(\phi(\cdot g),\psi)_{\autspace_M^1}=(\phi^{\cusp}(\cdot g),\psi)_{\autspace_M^1},\ \ g\in G(\A)
\end{equation}
where $\autspace_M^1=M(F)\bs M(\A)^1$ and
\[
(f_1,f_2)_{\autspace_M^1}=\int_{\autspace_M^1}f_1(m)\overline{f_2(m)}\ dm
\]
whenever the integral is absolutely convergent.

Let $\varphi\in\AF_P$ and $w\in\Weyl(P,Q)$.
Then, for any $\lambda\in\chars_P$ in the range of absolute convergence of the intertwining operator $M(w,\lambda)\varphi$ we have
\begin{equation} \label{eq: commcusp}
(M(w,\lambda)\varphi)^{\cusp}=M(w,\lambda)(\varphi^{\cusp}).
\end{equation}
The statement reduces to two sub-statements:
\begin{enumerate}
\item If $\varphi\in\AF_P^{\cusp}$ then $M(w,\lambda)\varphi\in\AF_Q^{\cusp}$.
\item If $\varphi^{\cusp}\equiv0$ then $(M(w,\lambda)\varphi)^{\cusp}\equiv0$.
\end{enumerate}
Both parts easily follow from \eqref{eq: monm} and \eqref{eq: altc}.

For any $\phi\in\AF_P$ and $Q\subset P$ let \index{cnstQphicusp@$\cnst_{P,Q}^{\cusp}\phi$}
$\cnst_{P,Q}^{\cusp}\phi:=(\cnst_{P,Q}\phi)^{\cusp}\in\AF_Q^{\cusp}$ be the \emph{cuspidal component of $\phi$ along $Q$}.
Thus, we get a linear map \index{totcsup_P@$\totcusp_P$}
\[
\totcusp_P:\AF_P\rightarrow\oplus_{Q\in\stds}\AF_Q^{\cusp},\ \ \ \phi\mapsto(\cnst_{P,Q}^{\cusp}\phi)_{Q\in\stds}
\]
where by convention $\cnst_{P,Q}^{\cusp}\phi=0$ unless $Q\subset P$.

We denote by $\Exp_Q^{\cusp}(\phi)\subset\charsA_Q$ \index{Exp@$\Exp_Q^{\cusp}(\phi)$, $\Exp^{\cusp}(\phi)$} the (finite) set of
\emph{cuspidal exponents of $\phi$ along $Q$}.
Thus, $\lambda\in\Exp_Q^{\cusp}(\phi)$ if and only if $\cnst_{P,Q}^{\cusp}\phi$ has a non-zero
$\lambda$-coordinate with respect to the decomposition \eqref{eq: AFd2}.
We write
\[
\Exp^{\cusp}(\phi)=\{(Q,\lambda):Q\in\stds,\ Q\subset P,\ \lambda\in\Exp_Q^{\cusp}(\phi)\}.
\]
The following basic fact is due to Langlands -- see \cite{MR1361168}*{I.3.4}.
\begin{proposition} \label{prop: cuspne0}
Let $\phi\in\AF_G$ be non-zero. Then, $\Exp^{\cusp}(\phi)\ne\emptyset$. In other words, the map $\totcusp_G$ is injective.
\end{proposition}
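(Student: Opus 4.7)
The plan is to prove the proposition by induction on the semisimple $F$-rank of $G$. The base case is when $G$ admits no proper parabolic subgroup defined over $F$, so $\stds=\{G\}$ and $\AF_G=\AF_G^{\cusp}$; the hypothesis $\cnst_{G,G}^{\cusp}\phi=\phi^{\cusp}=0$ then forces $\phi=0$, since the cuspidal projection acts as the identity on $\AF_G^{\cusp}$.

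For the inductive step, fix $\phi\in\AF_G$ with $\cnst_{G,R}^{\cusp}\phi=0$ for all $R\in\stds$. First I will show that $\cnst_{G,Q}\phi=0$ for every proper $Q\in\stds$. Set $\psi_Q=\cnst_{G,Q}\phi\in\AF_Q$; the transitivity $\cnst_{Q,R}\circ\cnst_{G,Q}=\cnst_{G,R}$ recorded in \S\ref{sec: cuspexp} yields
\[
\cnst_{Q,R}^{\cusp}\psi_Q=\cnst_{G,R}^{\cusp}\phi=0\quad\text{for every }R\in\stds\text{ with }R\subset Q.
\]
Via the Iwasawa decomposition $G(\A)=M_Q(\A)U_Q(\A)\K$, I then view $\psi_Q$ as a $\K$-finite family of automorphic forms on $M_Q(F)\bs M_Q(\A)$ indexed by $k\in\K$ (standard parabolics of $M_Q$ corresponding to those $R\in\stds$ with $R\subset Q$). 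Applying the inductive hypothesis slice by slice, valid since $M_Q$ has strictly smaller semisimple rank, forces $\psi_Q\equiv 0$. Since every proper constant term of $\phi$ vanishes, the definition of $\AF_G^{\cusp}$ places $\phi$ in that subspace; hence $\phi=\phi^{\cusp}=0$.

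The one point requiring care is the passage from $\psi_Q\in\AF_Q$ to automorphic forms on the Levi $M_Q$: one must verify that after restriction to each slice $m\mapsto\psi_Q(mk)$, the constant terms $\cnst_{Q,R}$ correspond to the $M_Q$-parabolic constant terms along the parabolics $R\cap M_Q$, and likewise that the cuspidal projection on $\AF_Q$ restricts to the cuspidal projection for automorphic forms on $M_Q(F)\bs M_Q(\A)$. These compatibilities are essentially built into the definitions recalled in \S\ref{sec: cuspexp}; once they are unwound, the induction closes without any further input beyond elementary parabolic structure theory.
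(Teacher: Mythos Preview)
Your argument is correct. The paper does not supply its own proof of this proposition; it simply records it as a basic fact due to Langlands and refers the reader to \cite{MR1361168}*{I.3.4}. Your induction on the semisimple rank, reducing to the Levi $M_Q$ via the Iwasawa decomposition and using the transitivity $\cnst_{Q,R}\circ\cnst_{G,Q}=\cnst_{G,R}$ together with the compatibility of cuspidal projections (which, as you note, follows from the characterization \eqref{eq: altc}), is precisely the standard argument underlying that reference. The caveat you flag about matching $\cnst_{Q,R}$ and the cuspidal projection on $\AF_R$ with the corresponding operations on $\AF_{R\cap M_Q}^{M_Q}$ is genuine but routine: since $M_{R\cap M_Q}=M_R$, both cuspidal projections are governed by pairing against cusp forms on $M_R(F)\bs M_R(\A)$, and \eqref{eq: altc} with $g=m'k$ ($m'\in M_Q(\A)$, $k\in\K$) gives exactly what is needed.
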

In fact, the set $\Exp^{\cusp}(\phi)$ determines the growth of $\phi$ (\cite{MR1361168}*{I.4.1}).

\subsection{Leading cuspidal components} \label{sec: leading}

For the time being (until and including \S\ref{sec: prfl}) it will be convenient to use the non-twisted action
of $\AAA_P$ on $\AF_P$. Correspondingly, we write \index{AFPlamun@$\AF_{P,\lambda}^{\un}$, $\AF_{P,\lambda}^{\cusp,\un}$}
\begin{equation} \label{eq: decun}
\AF_P=\dsum_{\lambda\in\charsA_P}\AF_{P,\lambda}^{\un},\ \ \
\AF_P^{\cusp}=\dsum_{\lambda\in\charsA_P}\AF_{P,\lambda}^{\cusp,\un}
\end{equation}
where $\AF_{P,\lambda}=\AF_{P,\lambda+\rho_P}^{\un}$ and $\AF_{P,\lambda}^{\cusp}=\AF_{P,\lambda+\rho_P}^{\cusp,\un}$.
(The superscript stands for ``unnormalized''.)

Let \index{aaa0+@$\aaa_{0,+}^*$}
\[
\aaa_{0,+}^*=\{\lambda\in\aaa_0^*\mid\sprod{\lambda}{\alpha^\vee}\ge0\text{ for all }\alpha\in\srts_0\}
\]
be the closed positive Weyl chamber.
We say that $\lambda\in\charsA_P$ is \emph{leading} if $\Re\lambda\in\aaa_{0,+}^*$.

We denote by $\AF_P^{\cusp,\dom}$ the direct sum \index{AFPdom@$\AF_P^{\cusp,\dom}$}
\[
\AF_P^{\cusp,\dom}=\oplus\AF_{P,\lambda}^{\cusp,\un}
\]
over the leading $\lambda\in\charsA_P$ and by \index{projPdom@$p_P^{\dom}$}
\[
p_P^{\dom}:\AF_P^{\cusp}\rightarrow\AF_P^{\cusp,\dom}
\]
the projection according to \eqref{eq: decun}.

For any $\phi\in\AF_G$ we define $\Exp_P^{\cusp,\un}(\phi)$ with respect to the non-twisted action so that
$\Exp_P^{\cusp,\un}(\phi)$ is the translate of $\Exp_P^{\cusp}(\phi)$ by $\rho_P$. \index{ExpPun@$\Exp_P^{\cusp,\un}(\phi)$}
We define the \emph{leading (unnormalized) cuspidal exponents} of $\phi$ along $P$
to be the elements of $\Exp_P^{\cusp,\un}(\phi)$ that are leading and denote it by
$\Exp_P^{\cusp,\un,\dom}(\phi)$.
We write \index{Expcuspdom@$\Exp_P^{\cusp,\un,\dom}(\phi)$, $\Exp^{\cusp,\un,\dom}(\phi)$}
\[
\Exp^{\cusp,\un,\dom}(\phi)=\{(P,\lambda)\mid P\in\stds,\lambda\in\Exp_P^{\cusp,\un,\dom}(\phi)\}.
\]
We define the \emph{leading cuspidal component} of $\phi\in\AF_G$ along $P$ to be $p_P^{\dom}(\cnst_{G,P}^{\cusp}\phi)$.
Thus, we get a linear map \index{lead@$\lead$} \index{leadP@$\lead_P$}
\begin{subequations}
\begin{equation} \label{def: lead}
\lead:\AF_G\rightarrow\oplus_{P\in\stds}\AF_P^{\cusp,\dom}
\end{equation}
which is the composition of $\totcusp_G$ with $\oplus_{P\in\stds}p_P^{\dom}$.
More generally, for any $P\in\stds$ let
\begin{equation} \label{def: leadP}
\lead_P:\AF_P\rightarrow\oplus_{Q\in\stds}\AF_Q^{\cusp,\dom}
\end{equation}
be the composition of $\totcusp_P$ with $\oplus_{Q\in\stds}p_Q^{\dom}$.
\end{subequations}

The following result is an extension of Proposition \ref{prop: cuspne0}, which will be proved in \S\ref{sec: prfl} below.
\begin{theorem} \label{thm: lead}
Suppose that $\phi\in\AF_G$ is non-zero. Then, $\Exp^{\cusp,\dom}(\phi)\ne\emptyset$.
In other words, the map $\lead$ is injective.
\end{theorem}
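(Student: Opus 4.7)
The plan is to argue by a maximality argument: choose an extremal cuspidal exponent of $\phi$ and show it is automatically leading.

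By Proposition \ref{prop: cuspne0}, the set $\Exp^{\cusp,\un}(\phi)$ is non-empty. Identifying each $\aaa_P^*$ with the annihilator of $\aaa_0^P$ inside $\aaa_0^*$, view the real parts of the elements of $\Exp^{\cusp,\un}(\phi)$ as a finite subset of $\aaa_0^*$, and equip $\aaa_0^*$ with the partial order $\mu \le \mu'$ iff $\mu' - \mu$ is a non-negative real combination of simple roots in $\srts_0$. Pick a pair $(P,\lambda) \in \Exp^{\cusp,\un}(\phi)$ whose real part is maximal in this order. It suffices to show $\Re\lambda \in \aaa_{0,+}^*$, since then $(P,\lambda) \in \Exp^{\cusp,\un,\dom}(\phi)$ and the proof is complete. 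For $\alpha \in \srts_0^P$, the coroot $\alpha^\vee$ lies in $\aaa_0^P$ while $\Re\lambda \in \aaa_P^* = (\aaa_0^P)^\perp$, so $\sprod{\Re\lambda}{\alpha^\vee}=0$ automatically. It therefore remains to handle $\alpha \in \srts_0 \setminus \srts_0^P$.

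Suppose for contradiction that $\sprod{\Re\lambda}{\alpha^\vee} < 0$ for some such $\alpha$. The $s_\alpha$-reflection $\mu := \lambda - \sprod{\lambda}{\alpha^\vee}\alpha$ has real part satisfying $\Re\mu - \Re\lambda = -\sprod{\Re\lambda}{\alpha^\vee}\alpha$, which lies in $\R_{>0}\alpha$ and is therefore strictly positive in our partial order. A contradiction with maximality would follow if we can exhibit a standard parabolic $Q$ together with a pair $(Q,\mu) \in \Exp^{\cusp,\un}(\phi)$. The natural candidate for $Q$ is the standard parabolic whose Levi is the $s_\alpha$-conjugate of $M_P$, put in standard position. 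Concretely, let $P'$ be the standard parabolic with $\srts_0^{P'} = \srts_0^P \cup \{\alpha\}$, so $P \subsetneq P'$; then the subgroup of $M_{P'}$ generated by $M_P$ and the $\alpha$-root subgroup is an $\SL_2$-like extension of $M_P$, and a rank-one analysis of the automorphic form $\cnst_{G,P'}\phi$ along this extension should produce, via the elementary $\SL_2$-style reflection at the wall $\alpha^\vee = 0$, the desired pair $(Q,\mu) \in \Exp^{\cusp,\un}(\phi)$.

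The main obstacle is to rigorously execute this rank-one reflection step using only the tools available at this point of the paper, without invoking intertwining operators or the meromorphic continuation and functional equation of Eisenstein series, which are the very content of the theorem being proved. The argument must rely only on reduction theory, transitivity of constant terms $\cnst_{G,Q} = \cnst_{P',Q} \circ \cnst_{G,P'}$, and the compatibility of cuspidal projection with constant terms recorded in \eqref{eq: commcusp}. The delicate point is to ensure that the reflected exponent $\mu$, produced by a rank-one manipulation inside $M_{P'}$, survives cuspidal projection on the target parabolic $Q$ rather than being cancelled; this should follow from the non-vanishing of $(\cnst_{G,P}^{\cusp}\phi)_\lambda$ together with the fact that the $\SL_2$-style reflection preserves cuspidality of the underlying form on $M_P$.
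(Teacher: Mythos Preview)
Your proposal has a genuine gap at the crucial step. You want to show that the $s_\alpha$-reflected exponent $\mu = s_\alpha\lambda$ actually appears in $\Exp^{\cusp,\un}(\phi)$, but this is precisely the content of Lemma \ref{lem: uniquemax}, and as the remark preceding that lemma makes explicit, its proof requires the coarse spectral decomposition of \cite{MR1361168}*{III}. Without that decomposition, the rank-one picture does not give what you claim: even for $\SL_2$, an automorphic form can have an unnormalized exponent $\lambda$ with $\Re\lambda<0$ along the Borel without $-\lambda$ appearing --- think of a sum of two unrelated Eisenstein series. All that the elementary boundedness argument (Lemma \ref{lem: nonexist}) yields is that \emph{some} exponent with non-negative pairing must be present, not the specific reflection. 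Your own final paragraph correctly identifies this as ``the main obstacle'' and then asserts it ``should follow'' from cuspidality-preservation considerations; it does not.

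The paper's proof is engineered to sidestep exactly this issue. Rather than the dominance partial order, it maximizes the scalar quantity $\sprod{\Re\lambda}{\varpi^\vee}$ for a fixed strictly dominant $\varpi^\vee$, and --- crucially --- it also requires $P$ to be of minimal semisimple rank among parabolics with $\Exp_P^{\cusp}(\phi)\neq\emptyset$. This minimality guarantees that upon passing to $M_{P_\alpha}$, all surviving cuspidal exponents sit at maximal parabolics, so Lemma \ref{lem: nonexist} applies and produces $(Q,\mu)$ with $\mu_{P_\alpha}=\lambda_{P_\alpha}$ and $\sprod{\Re\mu}{\beta^\vee}\ge 0$ (Corollary \ref{cor: uniquemax}). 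No claim is made that $\mu=s_\alpha\lambda$. The scalar functional then detects $\sprod{\Re\mu}{\varpi^\vee}>\sprod{\Re\lambda}{\varpi^\vee}$ directly from these two conditions, giving the contradiction. Your partial-order framework would not obviously yield a contradiction from Corollary \ref{cor: uniquemax} alone, since $\Re\mu-\Re\lambda$ need not be a non-negative combination of simple roots when $Q\neq P$; and your argument omits the minimality hypothesis on $P$ entirely, without which the reduction to the rank-one Lemma \ref{lem: nonexist} is not available.
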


\subsection{}
We first prove the following special case of Theorem \ref{thm: lead}.
\begin{lemma} \label{lem: nonexist}
Let $\phi\in\AF_G$. Assume that for every $(P,\lambda)\in\Exp^{\cusp,\un}(\phi)$ the following two properties are satisfied.
\begin{itemize}
\item $P$ is a maximal (proper) parabolic subgroup of $G$, i.e. $\srts_P$ is a singleton.
\item Writing $\srts_P=\{\alpha\}$ we have $\sprod{\Re\lambda}{\alpha^\vee}<0$.
\end{itemize}
Then, $\phi=0$.
\end{lemma}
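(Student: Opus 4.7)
The goal is to show that every cuspidal exponent of $\phi$ is zero and then invoke Proposition~\ref{prop: cuspne0}. The hypothesis immediately excludes $G$ (not a proper maximal parabolic) and every non-maximal proper $Q$ from the support of $\Exp^{\cusp,\un}(\phi)$, so $\phi^{\cusp}=0$ and $\cnst_{G,Q}^{\cusp}\phi=0$ for non-maximal proper $Q$. I would first sharpen these vanishings: for non-maximal proper $Q$ the full constant term $\cnst_{G,Q}\phi$ vanishes, and for maximal proper $P$, $\cnst_{G,P}\phi=\cnst_{G,P}^{\cusp}\phi=:\psi_P$. The key combinatorial observation is that no $R\in\stds$ with $R\subsetneq Q\subsetneq G$ can itself be maximal in $G$ (such a chain would force $|\srts_R|>|\srts_Q|\geq 1$, contradicting $|\srts_R|=1$). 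Consequently $\cnst_{G,R}^{\cusp}\phi=0$ for every $R\subsetneq Q$, so that $\cnst_{G,Q}\phi$, viewed as an automorphic form on $M_Q$, has every cuspidal component trivial; Proposition~\ref{prop: cuspne0} applied on the Levi $M_Q$ then gives $\cnst_{G,Q}\phi=0$. The analogous argument on $M_P$ handles the non-cuspidal part of $\cnst_{G,P}\phi$ when $P$ is maximal proper.

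\textbf{Step two: global boundedness of $\phi$.} Every remaining $(P,\lambda)\in\Exp^{\cusp,\un}(\phi)$ has $P$ maximal proper, $\srts_P=\{\alpha\}$, and $\sprod{\Re\lambda}{\alpha^\vee}<0$. On the Siegel set $\Siegel_{P_0}$ the $\aaa_P$-component of $\Ht_0(g)$ is a non-negative multiple of $\alpha^\vee$, so $\sprod{\Re\lambda}{\Ht_P(g)}\leq 0$ uniformly. Feeding this into the standard growth estimate bounding an automorphic form on a Siegel set by its cuspidal exponents (\cite{MR1361168}*{I.4.1}) shows that $\phi$ is bounded on $\Siegel_{P_0}$. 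Since $G(F)\cdot\Siegel_{P_0}=G(\A)$ and $\phi$ is $G(F)$-invariant, $\phi$ is bounded on $\autspace_G$.

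\textbf{Step three: contradiction.} For each maximal proper $P$, boundedness of $\phi$ passes to $\psi_P$: since $\vol(U_P(F)\bs U_P(\A))=1$, the pointwise bound $|\psi_P(g)|\leq\sup_{\autspace_G}|\phi|<\infty$ holds for every $g\in G(\A)$. However $\aaa_P$ is one-dimensional; writing $\Ht_P(a_t)=t\alpha^\vee$ for $a_t\in\AAA_P$, the function $t\mapsto\psi_P(a_tg_0)$ is a polynomial exponential whose exponents $\sprod{\lambda}{\alpha^\vee}$ all have strictly negative real part, so as $t\to-\infty$ each contributing exponential grows. By linear independence of polynomial exponentials, such a function is unbounded as $t\to-\infty$ unless it is identically zero. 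Hence $\psi_P\equiv 0$ for every maximal proper $P$. Combined with the Step one vanishings this gives $\Exp^{\cusp}(\phi)=\emptyset$, and Proposition~\ref{prop: cuspne0} forces $\phi=0$.

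\textbf{Main obstacle.} The algebraic reductions in Step one and the one-parameter analysis in Step three are essentially formal. The real content is Step two: one must invoke the non-trivial Harish-Chandra/Moeglin-Waldspurger growth bound which controls an automorphic form on a Siegel set by its cuspidal exponents, and keep track of the projection $\Ht_0\to\Ht_P$ to verify that strict anti-dominance of each $\Re\lambda$ with respect to $\alpha^\vee$ does translate into $\sprod{\Re\lambda}{\Ht_P(g)}\leq 0$ throughout $\Siegel_{P_0}$.
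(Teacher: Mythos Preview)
Your argument follows essentially the same route as the paper: reduce to maximal proper $P$ via Proposition~\ref{prop: cuspne0}, use the growth estimate \cite{MR1361168}*{I.4.1} to get boundedness, then kill the remaining cuspidal constant terms by the elementary fact that a bounded polynomial exponential in one variable with exponents of nonzero real part must vanish.

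One technical point to tighten: in Step~2 you claim boundedness on all of $\autspace_G$, but when $\aaa_G\ne0$ this need not hold --- the hypothesis $\sprod{\Re\lambda}{\alpha^\vee}<0$ constrains only the $\aaa_P^G$-component of $\Re\lambda$, not its restriction to $\aaa_G$, so $\sprod{\Re\lambda}{\Ht_P(g)}$ can be unbounded in the $\AAA_G$-direction. The paper handles this by asserting boundedness only on $G(\A)^1$ (for every right translate of $\phi$), which is what \cite{MR1361168}*{I.4.1} actually delivers. This is harmless for your Step~3, since the one-parameter family $a_t$ with $\Ht_P(a_t)=t\alpha^\vee$ lies in $\AAA_P\cap G(\A)^1$ (as $\alpha^\vee\in\aaa_P^G$), so boundedness on $G(\A)^1$ already suffices to force $\psi_P\equiv0$.
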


\begin{proof}
In the proof we will use the following elementary fact. Suppose that $f$ is a function on $\R$ (resp., $\Z$) of the form
\[
f(x)=\sum_{i=1}^ne^{\lambda_ix}P_i(x)
\]
where $P_1,\dots,P_n$ are non-zero polynomials and $\lambda_1,\dots,\lambda_n$ are distinct elements of $\C$ (resp., $\C/2\pi\iii\Z$).
Assume that $f$ is bounded on $\R$ (resp., $\Z$). Then, $\Re\lambda_i=0$ for all $i$.
Indeed, if $h$ (in $\R$ or $\Z$) is such that $e^{\lambda_ih}$ are distinct, then by applying the difference operators
$f\mapsto f(\cdot+h)-e^{\lambda_ih}f$ ($\deg P_i+1$ times for $i>1$ and $\deg P_1$
times for $i=1$) we may assume that $n=1$ and $\deg P_1=0$, in which case the statement is clear.

Back to the statement of the lemma, it easily follows from \cite{MR1361168}*{I.4.1} and the assumptions on $\phi$
that any right translate of $\phi$ is bounded on $G(\A)^1$. (Cf.\ the argument in \cite{MR1361168}*{I.4.11}.)
This implies that for any $P\in\stds$, any right translate of the constant term $\cnst_{G,P}\phi$ is also bounded on $G(\A)^1$.
Note that by Proposition \ref{prop: cuspne0} and our assumption, $\cnst_{G,P}\phi\equiv0$ for any non-maximal proper $P\in\stds$.
If $P$ is maximal, then $\cnst_{G,P}\phi$ is cuspidal and for any $g\in G(\A)$ the function $a\in\AAA_P^G\mapsto\cnst_{G,P}\phi(ag)$
is a polynomial exponential function in $\Ht_P(a)\in\aaa_P^G$ with exponents $\lambda^G$,
$\lambda\in\Exp_P^{\cusp,\un}(\phi)$.
Since by assumption $\Re\lambda^G\ne0$ for every $\lambda\in\Exp_P^{\cusp,\un}(\phi)$,
such a function, if bounded, must be identically $0$ by the above. Hence $\cnst_{G,P}\phi\equiv0$.
It follows that $\phi$ is cuspidal.
Since by assumption $\Exp_G^{\cusp}(\phi)=\emptyset$ we conclude that $\phi=0$.
\end{proof}

Let $\phi\in\AF_G$ be non-zero.
We say that a parabolic subgroup $P\in\stds$ is \emph{minimal with respect to $\phi$}
if $\Exp_P^{\cusp}(\phi)\ne\emptyset$ but $\Exp_Q^{\cusp}(\phi)=\emptyset$ for any $Q\in\stds$
of smaller semisimple rank than $P$.
Clearly, such $P$ exists by Proposition \ref{prop: cuspne0}.

For any $P\in\stds$ and $\alpha\in\srts_P$ let $P_\alpha\in\stds$ \index{Paralpha@$P_\alpha$}
\index{salpha@$s_\alpha$} \index{ParalphaP'@$P\xrightarrow\alpha P'$} be such that
$\srts_0^{P_\alpha}=\srts_0^P\cup\{\beta\}$ where $\beta\in\srts_0\setminus\srts_P$
is the unique simple root that projects to $\alpha$. Thus, $P$ is a maximal parabolic subgroup
of $P_\alpha$.

\begin{corollary} \label{cor: uniquemax}
Let $\phi\in\AF_G$, $(P,\lambda)\in\Exp^{\cusp,\un}(\phi)$ and $\alpha\in\srts_P$.
Let $P_\alpha$ be as above.
Assume that $P$ is minimal with respect to $\phi$ and $\sprod{\Re\lambda}{\alpha^\vee}<0$.
Then, there exists $(Q,\mu)\in\Exp^{\cusp,\un}(\phi)$ (possibly with $Q=P$) with the following properties.
\begin{itemize}
\item $Q$ is a maximal parabolic subgroup of $P_\alpha$.
\item $Q$ is minimal with respect to $\phi$.
\item $\mu_{P_\alpha}=\lambda_{P_\alpha}$.
\item $\sprod{\Re\mu}{\beta^\vee}\ge0$ where $\srts_Q^{P_\alpha}=\{\beta\}$.
\end{itemize}
\end{corollary}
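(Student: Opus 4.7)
The plan is to reduce to Lemma \ref{lem: nonexist} applied inside the Levi $M_{P_\alpha}$. Set $\psi = \cnst_{G,P_\alpha}\phi$ and decompose it by the untwisted $\AAA_{P_\alpha}$-action as $\psi = \sum_\nu \psi_\nu$ with $\psi_\nu \in \AF_{P_\alpha,\nu}^{\un}$; write $\psi' = \psi_{\lambda_{P_\alpha}}$. By transitivity of the constant term, $\cnst_{G,P}\phi = \cnst_{P_\alpha,P}\psi$, and any $\AAA_P$-exponent $\mu$ occurring in $\cnst_{P_\alpha,P}\psi_\nu$ must satisfy $\mu_{P_\alpha} = \nu$. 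Hence the $\lambda$-isotypic component of $\cnst_{G,P}^{\cusp}\phi$ comes exclusively from $\psi'$, showing that $\cnst_{P_\alpha,P}^{\cusp}\psi' \ne 0$; in particular $\psi' \ne 0$ and, since its $P$-constant term is nonzero, $\psi'$ is not cuspidal on $M_{P_\alpha}$.

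Next I view $\psi'$ as a family of automorphic forms on $M_{P_\alpha}$ parameterized by $\K$. Standard parabolic subgroups of $M_{P_\alpha}$ correspond bijectively to the $Q \in \stds$ with $Q \subset P_\alpha$, and such a $Q$ is maximal in $P_\alpha$ precisely when $|\srts_0^Q| = |\srts_0^P|$. The minimality of $P$ with respect to $\phi$ forces $\Exp_R^{\cusp}(\phi) = \emptyset$ whenever $|\srts_0^R| < |\srts_0^P|$, so all proper cuspidal exponents of $\psi'$ inside $M_{P_\alpha}$ occur along maximal parabolic subgroups of $M_{P_\alpha}$. For such a $Q$ with $\srts_Q^{P_\alpha} = \{\beta\}$, the cuspidal exponents of $\psi'$ along $Q$ within $M_{P_\alpha}$, viewed as characters of $\AAA_Q^{P_\alpha}$, are exactly $\{\mu^{P_\alpha} : \mu \in \Exp_Q^{\cusp,\un}(\phi),\ \mu_{P_\alpha} = \lambda_{P_\alpha}\}$; moreover $\sprod{\Re\mu}{\beta^\vee} = \sprod{\Re\mu^{P_\alpha}}{\beta^\vee}$, since $\beta^\vee \in \aaa_Q^{P_\alpha}$ is orthogonal to $\aaa_{P_\alpha}$.

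Now suppose for contradiction that no $(Q,\mu)$ with the four listed properties exists. Any $(Q,\mu) \in \Exp^{\cusp,\un}(\phi)$ with $Q$ maximal in $P_\alpha$ and $\mu_{P_\alpha} = \lambda_{P_\alpha}$ has $Q$ minimal with respect to $\phi$ automatically (again by the rank count), so the only remaining route to failure forces $\sprod{\Re\mu}{\beta^\vee} < 0$ in every such instance. Restricting $\psi'$ to $M_{P_\alpha}(\A)^1$ trivializes the $\AAA_{P_\alpha}$-central factor, and the resulting form then satisfies the hypotheses of Lemma \ref{lem: nonexist} with $G$ replaced by $M_{P_\alpha}$. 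The proof of that lemma transfers verbatim and forces $\psi' \equiv 0$, contradicting the nonvanishing established above. The main technical nuisance I anticipate is the transfer of Lemma \ref{lem: nonexist} to the Levi with a prescribed $\AAA_{P_\alpha}$-generalized eigenvalue, together with the bookkeeping of cuspidal exponents under the decomposition $\aaa_Q = \aaa_Q^{P_\alpha} \oplus \aaa_{P_\alpha}$ and under the combined operations of taking constant terms and isotypic projection; both are routine once the indices are sorted out.
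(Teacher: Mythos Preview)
Your approach is the same as the paper's: pass to $\psi=\cnst_{G,P_\alpha}\phi$, project to the $\lambda_{P_\alpha}$-generalized eigencomponent for $\AAA_{P_\alpha}$, descend to the Levi $M_{P_\alpha}$, and invoke Lemma~\ref{lem: nonexist}. There is one genuine omission. Before applying Lemma~\ref{lem: nonexist} inside $M_{P_\alpha}$ you must subtract the cuspidal projection $(\psi')^{\cusp}$ (equivalently, arrange $\Exp_{P_\alpha}^{\cusp,\un}(\psi')=\emptyset$). The hypotheses of that lemma require that \emph{every} pair in $\Exp^{\cusp,\un}$ lie along a maximal \emph{proper} parabolic; nothing in your setup rules out $\lambda_{P_\alpha}\in\Exp_{P_\alpha}^{\cusp,\un}(\phi)$, since minimality of $P$ only controls parabolics of smaller semisimple rank, and $P_\alpha$ has larger rank. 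If $(\psi')^{\cusp}\ne0$ the lemma simply does not apply. The repair is one line: replace $\psi'$ by $\psi'-(\psi')^{\cusp}$; constant terms along proper $Q\subsetneq P_\alpha$ are unchanged, so $\cnst_{P_\alpha,P}^{\cusp}(\psi'-(\psi')^{\cusp})=\cnst_{P_\alpha,P}^{\cusp}\psi'\ne0$ still yields the contradiction. The paper performs exactly this subtraction.

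A smaller point: ``restricting $\psi'$ to $M_{P_\alpha}(\A)^1$'' is not what you want, since $\psi'$ lies only in a \emph{generalized} eigenspace for $\AAA_{P_\alpha}$ and the restriction need not determine it. It is cleaner (and what the paper does) to fix $k\in\K$ with $\cnst_{P_\alpha,P}^{\cusp}\psi'(\cdot k)\ne0$ and work with the honest automorphic form $\psi'(\cdot k)\in\AF_{M_{P_\alpha}}$; no restriction is needed because $\beta^\vee\in\aaa_Q^{P_\alpha}$ already kills the $\aaa_{P_\alpha}$-component of any exponent.
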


\begin{proof}
We first remark that by the minimality of $P$, if $(Q,\mu)\in\Exp^{\cusp}(\phi)$ and $Q\subsetneq P_\alpha$ for some
$\alpha\in\srts_P$, then $Q$ is a maximal parabolic subgroup of $P_\alpha$ and hence, $Q$ is minimal with respect to $\phi$
(since $P$ and $Q$ have the same semisimple rank).

For $k\in\K$ let $\phi'_k=\cnst_{G,P_\alpha}\phi(\cdot k)$ which is an automorphic form on $\autspace_{M_{P_\alpha}}$.
There exists $k$ such that $(P\cap M_{P_\alpha},\lambda)\in\Exp^{\cusp,\un}(\phi'_k)$.
Therefore, by passing to $\phi'_k$ and replacing $G$ by $M_{P_\alpha}$
and $P$ by $P\cap M_{P_\alpha}$, we reduce the corollary to the case that $P_\alpha=G$, i.e., $P$ is maximal.
Upon subtracting the cuspidal projection $\phi^{\cusp}$ of $\phi$, we may also assume without loss of generality that $\Exp_G^{\cusp}(\phi)=0$.
Finally, by decomposing $\phi$ according to the action of $\AAA_G$ \eqref{eq: decun} we can assume that
$\mu_G=\lambda_G$ for all $(Q,\mu)\in\Exp^{\cusp,\un}(\phi)$.
Under these assumptions, the first three conditions hold automatically for any $(Q,\mu)\in\Exp^{\cusp,\un}(\phi)$.
Therefore, the corollary follows from the lemma above.
\end{proof}

\subsection{Proof of Theorem \ref{thm: lead}}  \label{sec: prfl}
Let $0\not\equiv\phi\in\AF_G$.
Fix $\varpi^\vee\in\aaa_0$ such that $\sprod\alpha{\varpi^\vee}>0$ for all $\alpha\in\srts_0$.
Let $(P,\lambda)\in\Exp^{\cusp,\un}(\phi)$ be such that $P$ is minimal with respect to $\phi$ and
$\sprod{\Re\lambda}{\varpi^\vee}$ is maximal.
We claim that $\lambda$ is leading, i.e., $\Re\lambda\in\aaa_{0,+}^*$. Assume on the contrary that this is not the case.
Then, there exists $\alpha\in\srts_P$ such that $\sprod{\Re\lambda}{\alpha^\vee}<0$.
Let $(Q,\mu)\in\Exp^{\cusp,\un}(\phi)$ be as in Corollary \ref{cor: uniquemax} and write $\srts_Q^{P_\alpha}=\{\beta\}$.
Note that $(\varpi^\vee)_P^{P_\alpha}$ is a positive multiple of
$\alpha^\vee$ since they are proportional and by Lemma \ref{lem: alpalp} $\sprod{\alpha}{(\varpi^\vee)_P^{P_\alpha}}=\sprod{\alpha}{\varpi^\vee}>0$
and $\sprod{\alpha}{\alpha^\vee}>0$. Similarly, $(\varpi^\vee)_Q^{P_\alpha}$ is a positive multiple of $\beta^\vee$.
Thus,
\[
(\Re\lambda)_{P_\alpha}=(\Re\mu)_{P_\alpha}
\]
and
\[
\sprod{(\Re\lambda)^{P_\alpha}}{\varpi^\vee}=\sprod{\Re\lambda}{(\varpi^\vee)_P^{P_\alpha}}<0\le
\sprod{\Re\mu}{(\varpi^\vee)_Q^{P_\alpha}}=\sprod{(\Re\mu)^{P_\alpha}}{\varpi^\vee}.
\]
Hence,
\begin{align*}
\sprod{\Re\lambda}{\varpi^\vee}=&\sprod{(\Re\lambda)_{P_\alpha}}{\varpi^\vee}+\sprod{(\Re\lambda)^{P_\alpha}}{\varpi^\vee}<\\
&\sprod{(\Re\mu)_{P_\alpha}}{\varpi^\vee}+\sprod{(\Re\mu)^{P_\alpha}}{\varpi^\vee}=\sprod{\Re\mu}{\varpi^\vee},
\end{align*}
gainsaying the maximality of $\sprod{\Re\lambda}{\varpi^\vee}$. \hfill $\qed$

\begin{remark}
Using the coarse spectral decomposition for automorphic forms (\cite{MR1361168}*{III})
we can get additional information on the set $\Exp^{\cusp}(\phi)$ of an automorphic form $\phi\in\AF_G$ as follows.
(We will not use this result in the sequel.)
\end{remark}

\begin{lemma} \label{lem: uniquemax}
Let $\phi\in\AF_G$, $(P,\lambda)\in\Exp^{\cusp}(\phi)$ and $\alpha\in\srts_P$.
Assume that $\sprod{\Re\lambda+\rho_P}{\alpha^\vee}<0$.
Let $s_\alpha$ be the elementary symmetry corresponding to $\alpha$ (\cite{MR1361168}*{I.1.7}). Thus,
$P_\alpha$ is generated by $P$ and $s_\alpha$, and $s_\alpha\in\Weyl(P,P')$ where
$P'$ is a maximal parabolic subgroup of $P_\alpha$. Then, $s_\alpha\lambda\in\Exp_{P'}^{\cusp}(\phi)$.
\end{lemma}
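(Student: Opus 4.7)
The plan is to combine the reduction strategy of Corollary~\ref{cor: uniquemax} with the coarse spectral decomposition of Langlands (\cite{MR1361168}*{III}), as flagged in the preceding remark.

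\emph{Reduction to a maximal parabolic.} Exactly as in Corollary~\ref{cor: uniquemax}, choose $k\in\K$ such that $\phi_k(g):=\cnst_{G,P_\alpha}\phi(gk)$, regarded as an automorphic form on $\autspace_{M_{P_\alpha}}$, has $(P\cap M_{P_\alpha},\lambda)$ as a cuspidal exponent. Since $\cnst_{G,P'}\phi$ is obtained from $\cnst_{M_{P_\alpha},P'\cap M_{P_\alpha}}\phi_k$, it suffices to prove the lemma inside $M_{P_\alpha}$. We may thus assume $P_\alpha=G$, so that $P$ is a maximal parabolic of $G$ with $\srts_P=\{\alpha\}$, $P'=s_\alpha Ps_\alpha^{-1}$ shares the Levi $M=M_P=M_{P'}$, and $s_\alpha$ is the non-trivial element of $\Weyl(P,P')$.

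\emph{Isolating a coarse component.} Apply $\phi=\sum_{\mathfrak{X}}\phi_{\mathfrak{X}}$ (\cite{MR1361168}*{III.2}); this decomposition is compatible with the cuspidal component maps, so the hypothesis $(P,\lambda)\in\Exp^{\cusp}(\phi)$ singles out a non-zero $\phi_0=\phi_{\mathfrak{X}_0}$ attached to the associate class $\mathfrak{X}_0$ of $(M,(\lambda+\rho_P)\rest_{M(\A)^1})$. The cuspidal exponents of $\phi_0$ are confined to the $\Weyl$-orbit of this cuspidal datum modulo $\chars$-twists, so the only candidate exponent at $P'$ of the required central character is $s_\alpha\lambda$. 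We replace $\phi$ by $\phi_0$.

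\emph{Functional equation and residue extraction.} The coarse decomposition represents $\phi_0$ as a pseudo-Eisenstein series --- a contour integral in $\chars_P$ of cuspidal Eisenstein series $E(\Psi(\nu),\nu)$ with $\Psi(\nu)\in\AF_P^{\cusp}$ of the prescribed central character. The constant term along $P'$ of each $E(\Psi(\nu),\nu)$ yields the intertwining contribution $M(s_\alpha,\nu)\Psi(\nu)$ at exponent $s_\alpha\nu$, corresponding to $s_\alpha\in\Weyl(P,P')$. Extracting the $\lambda$-eigenspace of $\cnst_{G,P}^{\cusp}\phi$ amounts to shifting the contour past $\nu=\lambda$ and collecting the residue; the strict inequality $\sprod{\Re\lambda+\rho_P}{\alpha^\vee}<0$ places $\lambda$ strictly off the original contour, legitimizing this shift. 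The functional equation $E(\Psi,\nu)=E(M(s_\alpha,\nu)\Psi,s_\alpha\nu)$ identifies this residue, transported by $M(s_\alpha,\lambda)$, as a non-zero $s_\alpha\lambda$-component of $\cnst_{G,P'}^{\cusp}\phi$, whence $s_\alpha\lambda\in\Exp_{P'}^{\cusp}(\phi)$.

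The main obstacle is the precise bookkeeping of the residue and the verification of its non-vanishing: non-triviality of the residue follows from the hypothesis that the $\lambda$-eigenspace of $\cnst_{G,P}^{\cusp}\phi$ is non-zero, while the strictness of the inequality $\sprod{\Re\lambda+\rho_P}{\alpha^\vee}<0$ provides the room both to separate $\lambda$ from the contour and to stay away from any poles of $M(s_\alpha,\lambda)$ that would otherwise cause the transported contribution at $P'$ to vanish. All inputs (meromorphic continuation of cuspidal Eisenstein series, their functional equations, and the coarse spectral decomposition) are part of Langlands's classical theory for the cuspidal case and are available independently of the main theorem of this paper.
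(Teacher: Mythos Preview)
Your first two steps---reducing to $P_\alpha=G$ via the constant term, then passing to the coarse component $\phi_0=\phi_\theta$ so that $\Exp^{\cusp}(\phi_0)\subset\{(P,\lambda),(P',s_\alpha\lambda)\}$---are exactly what the paper does. The divergence, and the gap, is in your third step.

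The paper does not touch contour integrals, residues, or the functional equation at all. After the reduction you have a nonzero automorphic form $\phi_0$ whose only possible cuspidal exponents are $(P,\lambda)$ and $(P',s_\alpha\lambda)$, both at maximal parabolics. If $(P',s_\alpha\lambda)\notin\Exp^{\cusp}(\phi_0)$, then $\Exp^{\cusp}(\phi_0)=\{(P,\lambda)\}$, and the hypothesis $\sprod{\Re\lambda+\rho_P}{\alpha^\vee}<0$ is precisely the input to Lemma~\ref{lem: nonexist}, which forces $\phi_0=0$---a contradiction. That is the entire argument.

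Your step 3 has two real problems. First, the coarse decomposition of an automorphic form does not present $\phi_0$ as a contour integral of cuspidal Eisenstein series amenable to a contour shift; you are conflating the $L^2$ spectral expansion (wave packets) with the decomposition \eqref{eq: decomp} of $\AF_G$. There is no contour to shift. Second, even granting some such representation, your non-vanishing claim for the $P'$-contribution rests on $M(s_\alpha,\lambda)$ not killing the relevant cuspidal vector. The inequality $\sprod{\Re\lambda+\rho_P}{\alpha^\vee}<0$ puts you outside the region of absolute convergence of $M(s_\alpha,\cdot)$, where it is defined only by meromorphic continuation; nothing prevents it from having a kernel at that particular $\lambda$ (this happens exactly where $M(s_\alpha^{-1},s_\alpha\lambda)$ has a pole). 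So the asserted non-vanishing is unjustified. Replace your third step by the one-line appeal to Lemma~\ref{lem: nonexist}.
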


To prove the lemma, we first recall the coarse spectral decomposition for automorphic forms.

Consider the equivalence relation on pairs $(P,\lambda)$, $P\in\stds$, $\lambda\in\charsA_P$
given by $(P,\lambda)\sim(P',\lambda')$ if there exists $w\in\Weyl(P,P')$ such that
$w\lambda=\lambda'$.
For any $\sim$-equivalence class $\theta$ let \index{AFtheta@$\AF_\theta$}
\[
\AF_\theta=\{\phi\in\AF_G\mid\Exp^{\cusp}(\phi)\subset\theta\}.
\]
We have a direct sum decomposition
\begin{equation} \label{eq: decomp}
\AF_G=\dsum_\theta\AF_\theta
\end{equation}
where $\theta$ ranges over the equivalence classes of pairs $(P,\lambda)$, $\lambda\in\charsA_P$
(\cite{MR1361168}*{III.2.6}). (In fact, the decomposition in [loc.\ cit.]\ is more refined, but for our purposes
\eqref{eq: decomp} is enough.)
Thus, if $\phi\in\AF_G$ and $\phi=\sum_\theta\phi_\theta$ is the corresponding decomposition, then
$\phi_\theta\ne0$ if and only if $\lambda\in\Exp_P^{\cusp}(\phi)$ for some $(P,\lambda)\in\theta$.

By considering $\cnst_{G,P_\alpha}\phi$, we reduce the lemma to the case that $P$ and $P'$ are maximal.
By the decomposition (\ref{eq: decomp}) we can also assume that
$\Exp^{\cusp}(\phi)\subset\{(P,\lambda),(P',s_\alpha\lambda)\}$.
In this case, the lemma follows from Lemma \ref{lem: nonexist}. \hfill $\qed$

\subsection{Polynomial exponential functions} \label{sec: polyexp}

The restriction of any automorphic form in $\AF_P$ to $\AAA_P$ is a simple function, namely a polynomial exponential.
It will be convenient to set some notation pertaining to this type of functions.

Let $V$ be either a finite-dimensional real vector space or a lattice thereof.
Denote by $\quas{V}$ the group of continuous quasi-characters of $V$.
We will write $v^\lambda$ for the value of $\lambda\in\quas{V}$ on $v\in V$.
For any tuple $\underline{\lambda}=(\lambda_1,\dots,\lambda_n)\in(\quas{V})^n$ we denote by \index{PE@$\PE_V(\underline{\lambda})$}
\[
\PE_V(\underline{\lambda})
\]
the finite-dimensional linear space consisting of the polynomial exponential functions on $V$ with exponents in
$\underline{\lambda}$, such that the degree of the polynomial pertaining to an exponent $\mu$ is
smaller than $\#\{i:\lambda_i=\mu\}$.
Equivalently, for any $\lambda\in\quas{V}$ and $v\in V$ let $D_v^\lambda$ be the difference operator \index{Diff1@$D_v^\lambda$}
\[
D_v^\lambda f(u)=f(u+v)-v^\lambda f(u),\ \ u\in V
\]
on functions on $V$. Then,
\[
\PE_V(\underline{\lambda})\text{ is the space of functions $f$ on $V$ such that }
D_{v_1}^{\lambda_1}\dots D_{v_n}^{\lambda_n}f\equiv0\ \ \forall v_1,\dots,v_n\in V.
\]
Of course, $\PE_V(\underline{\lambda})$ depends only on $\underline{\lambda}$ up to permutation of coordinates.

Note that $\dim\PE_V(\underline{\lambda})\ge n$ with equality if and only if
$\lambda_1,\dots,\lambda_n$ are distinct or $\dim V=1$.
In particular, if $\dim V>1$, then the family $\PE_V(\underline{\lambda})$ is not flat in the parameter $\underline{\lambda}$.

If $f_1\in\PE_V(\underline{\lambda})$ and $f_2\in\PE_V(\underline{\mu})$, then $f_1+f_2\in\PE_V(\underline{\lambda}\vee\underline{\mu})$
where $\underline{\lambda}\vee\underline{\mu}$ \index{lambda@$\underline{\lambda}\vee\underline{\mu}$}
denotes the concatenation of $\underline{\lambda}$ and $\underline{\mu}$.

Given $P\in\stds$ and $\underline{\lambda}\in\charsA_P^n$ we may define similarly the space
\index{PElambda@$\PE_{\AAA_P}(\underline{\lambda})$}
\begin{equation} \label{def: PEAP}
\PE_{\AAA_P}(\underline{\lambda})=\{f\circ\Ht_P\rest_{\AAA_P}:f\in\PE_{\Ht_P(\AAA_P)}(\underline{\lambda})\}
\end{equation}
of polynomial exponential functions on $\AAA_P$.

Slightly more generally, if $Q\subset P$ then for any $\underline{\lambda}\in\charsA_Q^n$ we write
$\PE_{\AAA_P}(\underline{\lambda})=\PE_{\AAA_P}(\underline{\lambda'})$
where $\underline{\lambda'}\in\charsA_P^n$ is obtained from $\underline{\lambda}$
by projecting each coordinate to $\charsA_P$.

Finally, we write \index{AFPlambda@$\AF_P(\underline{\lambda})$, $\AF_P^{\cusp}(\underline{\lambda})$}
\begin{equation} \label{eq: defpe}
\AF_P(\underline{\lambda})=\{\phi\in\AF_P\mid a\mapsto (a\cdot\phi)(g)\in\PE_{\AAA_P}(\underline{\lambda})
\ \forall g\in G(\A)\}
\end{equation}
and
\[
\AF_P^{\cusp}(\underline{\lambda})=\AF_P(\underline{\lambda})\cap\AF_P^{\cusp}.
\]
\label{sec: muP}

Any $\phi\in\AF_P$ belongs to $\AF_P(\underline{\mu})$ for some integer $m\ge0$ and $\underline{\mu}\in\charsA_P^m$.
Moreover, if $m$ is the minimal such integer, then $\underline{\mu}$ is unique up to permutation.
In particular, we can take $\underline{\mu}$ whose coordinates (as a set) index the nonzero coordinates in the decomposition
of $\varphi$ with respect to \eqref{eq: AFd}.
Thus, for any $\phi\in\AF_G$ we have $\cnst_{G,P}^{\cusp}\phi\in \AF_P(\underline{\mu})$ where the set of coordinates
of $\underline{\mu}$ is $\Exp_P^{\cusp}(\phi)$.

%Although we shall not use this fact directly, we remark that by the argument of \cite{MR1361168}*{p. 50} we have
%\begin{lemma} \label{lem: p50}
\begin{remark}
Let $\phi\in\AF_P$.
For any $Q\subset P$ let $m_Q\ge0$ be an integer and $\underline{\mu}_Q\in\charsA_Q^{m_Q}$ be such that
$\cnst_{P,Q}^{\cusp}\phi\in\AF_P^{\cusp}(\underline{\mu}_Q)$.
Then, for any $Q\subset P$ we have
\[
\cnst_{P,Q}\phi\in\AF_Q(\vee_{Q'\subset Q}\underline{\mu}_{Q'}),
\]
where $\vee_{Q'\subset Q}\underline{\mu}_{Q'}$ means the concatenation of $\underline{\mu}_{Q'}$,
$Q'\subset Q$ in arbitrary order \cite{MR1361168}*{p. 50}.
(See also the argument in Lemma \ref{lem: autcusp} below.)
\end{remark}

We note that if $\phi\in\AF_P(\underline{\mu})$ where $\underline\mu\in\charsA_P^n$ then for any $w\in\Weyl(P,Q)$ we have
\begin{equation} \label{eq: Mwexp}
M(w,\lambda)\phi\in\AF_Q(w\underline{\mu})
\end{equation}
(cf.\ \eqref{eq: expMw}).

\subsection{Weyl group double cosets}
Denote by $\Weyl^P$ \index{WeylP@$\Weyl^P$, $_Q\Weyl_P$} the Weyl group of $M_P$, viewed as a subgroup of $\Weyl$.
For any $P,Q\in\stds$ let
\[
_Q\Weyl_P=\{w\in\Weyl\mid w\alpha>0\ \forall\alpha\in\srts_0^P\text{ and }w^{-1}\alpha>0\ \forall\alpha\in\srts_0^Q\}.
\]

The following is standard.

\begin{lemma}[Bruhat Decomposition] \label{lem: bruhat} \
\begin{enumerate}
\item The set $\,_Q\Weyl_P$ is a set of representatives for the double cosets $\Weyl^Q\bs\Weyl/\Weyl^P$,
as well as for $Q(F)\bs G(F)/P(F)$.
\item For any $w\in\,_Q\Weyl_P$, the group $M_P\cap w^{-1}M_Qw$ is the Levi subgroup
of a standard parabolic subgroup $P_w$ of $P$; \index{Parw@$P_w$, $Q_w$} likewise,
$M_Q\cap wM_Pw^{-1}$ is the Levi subgroup of a standard parabolic subgroup $Q_w$ of $Q$.
\item We have
\[
Q\cap wPw^{-1}\subset Q_w,\ \ U_Q\cap wPw^{-1}=U_Q\cap wU_{P_w}w^{-1}
\]
and
\[
U_{Q_w}=(M_Q\cap wU_Pw^{-1})\ltimes U_Q=(U_{Q_w}\cap wU_{P_w}w^{-1})U_Q.
\]
\end{enumerate}
\end{lemma}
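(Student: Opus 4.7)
The plan is to treat the lemma as a repackaging of the Bruhat decomposition together with bookkeeping on root systems, and so to reduce everything to two classical inputs: the fact that the Weyl group $\Weyl$ together with $\srts_0$ is a Coxeter system, and the Bruhat decomposition $G(F)=\sqcup_{w\in\Weyl}P_0(F)wP_0(F)$. Part (1) is combinatorial; parts (2) and (3) are root-system manipulations once the element $w\in{}_Q\Weyl_P$ is fixed.

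For part (1), the key combinatorial input is that for any pair of standard parabolic subgroups $\Weyl^P,\Weyl^Q\subset\Weyl$, every double coset $\Weyl^Q w\Weyl^P$ contains a unique element of minimal length; that element is characterized by $w\alpha>0$ for all $\alpha\in\srts_0^P$ (minimality in its $\Weyl^P$-coset) and $w^{-1}\beta>0$ for all $\beta\in\srts_0^Q$ (minimality in its $\Weyl^Q$-coset), which is exactly the defining condition for ${}_Q\Weyl_P$. Transferring this to $Q(F)\bs G(F)/P(F)$ uses the Bruhat decomposition of $G(F)$ together with $P(F)=\sqcup_{w\in\Weyl^P}P_0(F)wP_0(F)$ and the analogous statement for $Q(F)$, which collapse the $(P_0,P_0)$ double cosets into $(Q,P)$-double cosets indexed by ${}_Q\Weyl_P$.

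For part (2), fix $w\in{}_Q\Weyl_P$ and let $L=M_P\cap w^{-1}M_Qw$. Since both factors contain $S_0$ and are Levi subgroups of standard parabolics of $G$, $L$ is a connected reductive subgroup containing $S_0$ whose root system (with respect to $S_0$) is the intersection of the root systems of $M_P$ and $w^{-1}M_Qw$. The first is $\Z$-spanned by $\srts_0^P$; the second is $w^{-1}$ applied to the $\Z$-span of $\srts_0^Q$, and because $w^{-1}\srts_0^Q\subset\Phi^+$ this again consists of sums of positive roots. A positive root in the intersection, being a non-negative combination of $\srts_0^P$ that also lies in the $w^{-1}$-image of the root system of $M_Q$, forces its support in $\srts_0^P$ to lie in a canonically determined subset $\srts_0^{P_w}\subset\srts_0^P$; that subset defines the standard parabolic $P_w$ of $P$ with Levi $L$. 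The same argument with $w^{-1}$ in place of $w$ (using $w\srts_0^P\subset\Phi^+$) produces $Q_w\subset Q$ with Levi $M_Q\cap wM_Pw^{-1}$.

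For part (3), the inclusion $Q\cap wPw^{-1}\subset Q_w$ is read off from the root groups: every $S_0$-root of $Q\cap wPw^{-1}$ is a positive root of $Q$ lying in $w\Phi^+_P$, and is then expressible as a non-negative combination of $\srts_0^{Q_w}\cup(\srts_0^Q\setminus\srts_0^{Q_w})$, but by the construction of $\srts_0^{Q_w}$ no such root can involve an element of $\srts_0^Q\setminus\srts_0^{Q_w}$, hence it belongs to $Q_w$. Next, decomposing $wPw^{-1}=(wM_Pw^{-1})\ltimes wU_Pw^{-1}$, note that $U_Q\cap wM_Pw^{-1}$ is trivial (a unipotent subgroup intersected with a reductive subgroup, since $U_Q$ has only roots outside the root system of $M_Q$, while those of $wM_Pw^{-1}$ equal $w$-images of roots of $M_P$, none of which lie in the $U_Q$-roots because $w^{-1}\srts_0^Q>0$); hence $U_Q\cap wPw^{-1}=U_Q\cap wU_Pw^{-1}$. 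Writing $U_P=U_{P_w}\cdot(U_P\cap M_{P_w})$ and noting that $w$-conjugates of roots of $U_P\cap M_{P_w}$ land inside $wM_Qw^{-1}\cap wM_Pw^{-1}=$ Levi of $Q_w$, intersection with $U_Q$ kills them, leaving $U_Q\cap wU_Pw^{-1}=U_Q\cap wU_{P_w}w^{-1}$. Finally, the two decompositions of $U_{Q_w}$: the first, $U_{Q_w}=(M_Q\cap wU_Pw^{-1})\ltimes U_Q$, is the Levi decomposition of $U_{Q_w}$ viewed inside the parabolic $Q_w$ of $Q$, since the unipotent radical of $Q_w\cap M_Q$ is precisely the group generated by the root subgroups in $M_Q$ coming from $\srts_0^Q\setminus\srts_0^{Q_w}$, and these are exactly the roots of $wU_Pw^{-1}$ lying in $M_Q$; the second, $U_{Q_w}=(U_{Q_w}\cap wU_{P_w}w^{-1})U_Q$, uses the first together with $wU_Pw^{-1}=wU_{P_w}w^{-1}\cdot w(U_P\cap M_{P_w})w^{-1}$ and again that the $M_{P_w}$-component lies in the Levi of $Q_w$.

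The main obstacle will be the root-system bookkeeping in parts (2)–(3), namely verifying that the subsets $\srts_0^{P_w}$ and $\srts_0^{Q_w}$ really cut out standard parabolic subgroups with the asserted Levis and that the two Levi-type decompositions of $U_{Q_w}$ are consistent; this amounts to the careful analysis carried out in \cite{MR1361168}*{I.1.7–I.1.8} and can be invoked directly once the combinatorial characterization of ${}_Q\Weyl_P$ from part (1) is in place.
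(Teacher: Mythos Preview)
The paper does not actually prove this lemma: it is introduced with the sentence ``The following is standard'' and no argument is supplied. Your outline for parts (1) and (2) follows the usual Coxeter-group and root-system route and is essentially correct, if somewhat informal in how the subset $\srts_0^{P_w}$ is pinned down.

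Part (3), however, contains a concrete error. You assert that $U_Q\cap wM_Pw^{-1}$ is trivial and deduce the intermediate identity $U_Q\cap wPw^{-1}=U_Q\cap wU_Pw^{-1}$. Both fail in general: take $P=G$, $Q=P_0$, $w=e\in{}_Q\Weyl_P$; then $U_Q\cap wM_Pw^{-1}=U_0\cap G=U_0$, while $U_Q\cap wU_Pw^{-1}=U_0\cap\{1\}=\{1\}$, yet the lemma's claim $U_Q\cap wPw^{-1}=U_Q\cap wU_{P_w}w^{-1}$ holds because here $P_w=P_0$ and both sides equal $U_0$. Your justification ``because $w^{-1}\srts_0^Q>0$'' is vacuous in this example since $\srts_0^Q=\emptyset$. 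The subsequent decomposition ``$U_P=U_{P_w}\cdot(U_P\cap M_{P_w})$'' is also backwards: since $P_w\subset P$ one has $U_P\subset U_{P_w}$ and $M_{P_w}\subset M_P$, hence $U_P\cap M_{P_w}\subset U_P\cap M_P=\{1\}$, so your formula would force $U_P=U_{P_w}$. The correct argument bypasses $U_P$: for a root $\beta$ of $U_Q$ with $w^{-1}\beta$ a root of $P$, the condition $w\srts_0^P>0$ forces $w^{-1}\beta>0$, and then $w^{-1}\beta$ cannot lie in the span of $\srts_0^{P_w}$ (else $\beta=w(w^{-1}\beta)$ would lie in the span of $\srts_0^{Q_w}\subset$ span of $\srts_0^Q$, contradicting that $\beta$ is a root of $U_Q$), so $w^{-1}\beta$ is a root of $U_{P_w}$.
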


Let \index{WeylPQ@$\Weyl^{\supset Q}(P)$}
\[
\Weyl^{\supset Q}(P)=\{w\in\Weyl\mid wM_Pw^{-1}\supset M_Q\text{ and }w\alpha>0\ \forall\alpha\in\srts_0^P\}=
\{w\in\,_Q\Weyl_P\mid Q_w=Q\}.
\]
Thus, if $w\in\Weyl^{\supset Q}(P)$, then $\Weyl^Qw\Weyl^P=w\Weyl^P$.
We may identify $\Weyl(P,Q)$ with the set
\[
\{w\in\,_Q\Weyl_P\mid wM_Pw^{-1}=M_Q\}=\Weyl^{\supset Q}(P)\cap\Weyl^{\supset P}(Q)^{-1}.
\]
Clearly, if $w\in\,_Q\Weyl_P$, then $w\in\Weyl(P_w,Q_w)$.
In particular, if $w\in\Weyl^{\supset Q}(P)$, then $w\in\Weyl(P_w,Q)$.

If $w\in\Weyl(P,Q)$, then $w$ induces a bijection $\srts_0^P\rightarrow\srts_0^Q$.

\subsection{}
Recall that the interior of $\aaa_{0,+}^*$ is a fundamental domain for the action of the Weyl group
on $\aaa_0^*$. It follows that if $\lambda\in\aaa_{0,+}^*$ is sufficiently regular, then
$w\lambda$ is far away from $\aaa_{0,+}^*$ for any $w\ne e$, and hence there exists $\alpha\in\srts_0$
such that $\sprod{w\lambda}{\alpha^\vee}$ is very negative.
The following is a variant of this basic fact.

\begin{lemma} \label{lem: far}
For any $c>0$ there exists $c'>0$ with the following property.
Let $e\ne w\in\Weyl^{\supset Q}(P)$ and $\lambda\in\aaa_P^*$.
Suppose that $\sprod{\lambda}{\alpha^\vee}>c'$ for all $\alpha\in\srts_P$.
Then, there exists $\gamma\in\srts_Q$ such that $\sprod{w\lambda}{\gamma^\vee}<-c$.\footnote{Note that
$w\lambda\in\aaa_Q^*$ since $w\in\Weyl^{\supset Q}(P)$.}
\end{lemma}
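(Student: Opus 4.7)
The plan is to deduce the quantitative estimate from a qualitative variant: if $e\ne w\in\Weyl(P;Q)$ and $\lambda\in\aaa_P^*$ satisfies $\sprod{\lambda}{\alpha^\vee}>0$ for all $\alpha\in\srts_P$, then there exists $\alpha\in\srts_Q$ with $\sprod{w\lambda}{\alpha^\vee}<0$. Once this is established, the version with constants $c$ and $c'$ follows by positive homogeneity together with a compactness argument on the simplicial dominant cone.

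For the qualitative claim I would argue by contradiction. Suppose instead that $\sprod{w\lambda}{\alpha^\vee}\ge 0$ for all $\alpha\in\srts_Q$. Since $w\lambda\in\aaa_Q^*$ (as noted in the footnote), the pairing $\sprod{w\lambda}{\alpha^\vee}$ vanishes for $\alpha\in\srts_0^Q$ and coincides with the pairing with the corresponding element of $\srts_Q$ for $\alpha\in\srts_0\setminus\srts_0^Q$; together with the assumption this places $w\lambda\in\aaa_{0,+}^*$. Similarly $\lambda\in\aaa_{0,+}^*$: its pairing with $\alpha\in\srts_0^P$ vanishes because $\lambda\in\aaa_P^*$, and on $\srts_0\setminus\srts_0^P$ it equals the strictly positive pairing with the projection in $\srts_P$. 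As $\aaa_{0,+}^*$ is a closed fundamental domain for the $\Weyl$-action on $\aaa_0^*$, we conclude $w\lambda=\lambda$. The strict positivity on every $\srts_P$-coroot and the vanishing on $\srts_0^P$-coroots pin down the stabilizer of $\lambda$ in $\Weyl$ to be exactly $\Weyl^P$. But a non-identity element of $\Weyl^P$ sends some $\alpha\in\srts_0^P$ to a negative root, contradicting the condition $w\alpha>0$ for $\alpha\in\srts_0^P$ coming from $\Weyl(P;Q)\subset\,_Q\Weyl_P$. Hence $w=e$, a contradiction.

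For the quantitative step, note that there are finitely many $(P,Q,w)$ to consider, so it suffices to produce $c'$ for each fixed $w\ne e$ in $\Weyl(P;Q)$. The functions $F_w(\lambda):=\max_{\alpha\in\srts_Q}(-\sprod{w\lambda}{\alpha^\vee})$ and $G(\lambda):=\min_{\alpha\in\srts_P}\sprod{\lambda}{\alpha^\vee}$ are continuous, piecewise linear and positively homogeneous, and they depend only on $\lambda$ modulo $\aaa_G^*$. On the quotient of $\aaa_P^*$ by $\aaa_G^*$ the dominant cone $\{G\ge 0\}$ is simplicial and pointed (spanned by the fundamental coweights dual to $\srts_P^\vee$), hence admits a compact projective cross-section. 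The qualitative step gives $F_w>0$ on $\{G>0\}$; combining this with the linearity of $F_w$ along the rays of the cone and continuity on the compact base produces a constant $\delta_w>0$ satisfying $F_w\ge\delta_w G$ throughout $\{G\ge 0\}$. Taking $c'=c/\min_w\delta_w$ then concludes.

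The main obstacle is the qualitative step; once the fundamental-domain/stabilizer argument is in place, the quantitative upgrade is a routine convex-analytic exercise.
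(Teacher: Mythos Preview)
Your qualitative step is correct and elegant: using that $\aaa_{0,+}^*$ is a closed fundamental domain and that the stabilizer of a strictly $P$-dominant $\lambda$ is exactly $\Weyl^P$, you force $w=e$. This is a genuinely different route from the paper, which instead produces directly a simple root $\alpha\in\srts_{P_w}\setminus\srts_{P_w}^P$ with $w\alpha<0$, and then expands the negative coroot $(w\alpha)^\vee$ in $\srts_0^\vee$ to locate the bad $\gamma\in\srts_Q$.

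The quantitative upgrade, however, has a gap. Your compactness argument would need the ratio $F_w/G$ to be bounded below on the compact projective base of the closed cone $\{G\ge0\}$, but $G$ vanishes on the entire boundary of that base, and $F_w$ can vanish there as well. Concretely, for $G=\SL_4$ with $\srts_0^P=\{\alpha_1\}$, $Q=P_0$ and $w=s_2\in\Weyl(P;P_0)$, the boundary point $\lambda\in\aaa_P^*$ with $\sprod{\lambda}{\alpha_2^\vee}=0$, $\sprod{\lambda}{\alpha_3^\vee}=1$ satisfies $w\lambda=\lambda$, hence $F_w(\lambda)=0=G(\lambda)$; so continuity on the compact base says nothing about $F_w/G$ near this point, and your fundamental-domain argument breaks down there because the stabilizer of $\lambda$ is strictly larger than $\Weyl^P$. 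The desired inequality $F_w\ge\delta_w G$ is in fact true, but proving it needs an input beyond positivity on the open cone. The paper's direct construction supplies exactly this: it gives a \emph{fixed} $\tilde\alpha\in\srts_0\setminus\srts_0^P$ (depending only on $w$) with $w\tilde\alpha<0$, and writing $(w\tilde\alpha)^\vee=-\sum_\gamma n_\gamma\gamma^\vee$ yields $F_w(\lambda)\ge N^{-1}\sprod{\lambda}{\tilde\alpha^\vee}\ge N^{-1}G(\lambda)$ with $N=\sum n_\gamma$, which is the missing linear lower bound.
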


\begin{proof}
Since $w\in\Weyl(P_w,Q)$ and $w\ne e$, there exists $\alpha\in\srts_{P_w}$
such that $\beta:=w\alpha<0$. Moreover, $\alpha\notin\srts_{P_w}^P$ since $w\in\Weyl^{\supset Q}(P)$.
Therefore, $\sprod{w\lambda}{\beta^\vee}=\sprod{\lambda}{\alpha^\vee}>c'$.
Expanding $\beta^\vee$ with respect to $\srts_Q^\vee$ we infer that $\sprod{w\lambda}{\gamma^\vee}<-c$ for some
$\gamma\in\srts_Q$, provided that $c'$ is sufficiently large with respect to $c$.
\end{proof}

\subsection{A uniqueness property of Eisenstein series} \label{sec: uniqEis}

For the rest of the section we fix $\varphi\in\AF_P$ and $\lambda\in\chars_P$ such that
$\sprod{\Re\lambda}{\alpha^\vee}\gg0$ for all $\alpha\in\srts_P$.
(The implied constant depends on $\varphi$.)
Recall the Eisenstein series $\psi=E(\varphi,\lambda)$ defined in \eqref{def: eisen}. Clearly,
\begin{equation} \label{eq: cEis}
\text{if $P\ne G$, then }\Exp_G^{\cusp}(\psi)=\emptyset.
\end{equation}
Let $\tilde\lambda$ be the image of $\lambda$ under the projection $\chars_P\rightarrow\charsA_P$.
Suppose that $\varphi\in\AF_P(\underline{\mu})$ where $\underline{\mu}=(\mu_1,\dots,\mu_n)\in\charsA_P^n$.
Then, $\psi\in\AF_G(\underline{\mu}+\tilde\lambda)$ where $\underline{\mu}+\tilde\lambda=(\mu_1+\tilde\lambda,\dots,\mu_n+\tilde\lambda)$
and we view $\mu_i$ and $\tilde\lambda$ as elements of $\charsA_G$. (See the convention before \eqref{eq: defpe}.)

Using the notion of the leading cuspidal component (\S\ref{sec: leading}),
we can characterize the Eisenstein series (in the range above) as follows.

\begin{proposition} \label{prop: unique}
The leading cuspidal components of $\psi$ coincide with those of $\varphi_\lambda$, i.e.,
\[
\lead(\psi)=\lead_P(\varphi_\lambda)
\]
(see \eqref{def: lead} and \eqref{def: leadP}).
Moreover, by Theorem \ref{thm: lead}, this relation uniquely characterizes $\psi$.
\end{proposition}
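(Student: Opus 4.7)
The plan is to compute the constant term $\cnst_{G,Q}\psi$ for each $Q\in\stds$ via its Bruhat expansion, apply cuspidal projection, and show that after projecting to the leading $\AAA_Q$-exponents via $p_Q^{\dom}$ only the identity Weyl element contributes; this element lies in ${}_Q\Weyl_P$ exactly when $Q\subseteq P$, which will reproduce $\totcusp_P(\varphi_\lambda)_Q$ on the nose.

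First, I would invoke the standard constant-term computation (Lemma~\ref{lem: GL} and its corollary), together with \eqref{eq: commcusp}, to decompose $\cnst_{G,Q}^{\cusp}\psi$ as a sum over $w\in{}_Q\Weyl_P$. Summands with $Q_w\supsetneq Q$ are cuspidal projections of proper Eisenstein series on $M_Q$, and will vanish in the absolute convergence range by the standard orthogonality of cusp forms to proper Eisenstein series. What remains is indexed by $w$ with $Q_w=Q$, and by \eqref{eq: Mwexp} each such summand has all its $\AAA_Q$-exponents contained in $\{(w(\mu_i+\tilde\lambda))_Q:i=1,\dots,n\}$, where $\underline\mu=(\mu_1,\dots,\mu_n)$ is the exponent tuple of $\varphi$ and $\tilde\lambda$ is the image of $\lambda$ in $\charsA_P$.

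Next, for each non-identity such $w$, I will apply Lemma~\ref{lem: far} with $c$ chosen to exceed $\max_{i,\alpha\in\srts_Q}\lvert\sprod{\Re w\mu_i}{\alpha^\vee}\rvert$ (a bound depending only on the fixed $\varphi$); the dominance assumption on $\Re\lambda$ then produces an $\alpha\in\srts_Q$ with $\sprod{\Re(w(\mu_i+\tilde\lambda))_Q}{\alpha^\vee}<0$ for every $i$, so no such exponent is leading and $p_Q^{\dom}$ annihilates these contributions. The identity appears only when $Q\subseteq P$, in which case the corresponding Bruhat cell yields precisely $\cnst_{P,Q}\varphi_\lambda$, whose cuspidal projection is $\cnst_{P,Q}^{\cusp}\varphi_\lambda$; here the dominance of $\Re\tilde\lambda$ guarantees that every exponent $(\mu+\tilde\lambda)_Q$ is leading, so $p_Q^{\dom}$ acts as the identity on this term. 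Combining these facts, $p_Q^{\dom}\cnst_{G,Q}^{\cusp}\psi$ equals $\cnst_{P,Q}^{\cusp}\varphi_\lambda$ when $Q\subseteq P$ and equals $0$ otherwise, which is exactly $\totcusp_P(\varphi_\lambda)_Q$ by the convention in \eqref{def: totc}; the uniqueness clause is then immediate from Theorem~\ref{thm: lead} applied to the difference of any two candidates. The hard part will be executing the first step cleanly — namely, verifying that the cells with $Q_w\supsetneq Q$ genuinely disappear under cuspidal projection, which reduces to orthogonality of cusp forms to proper Eisenstein series but requires careful bookkeeping of the $M_Q$-level Bruhat structure in the dominant range of $\lambda$.
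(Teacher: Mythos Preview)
Your proposal is correct and is essentially the paper's own argument: the paper combines Lemma~\ref{lem: GL} (yielding \eqref{eq: consterm} and \eqref{eq: cuspe}) with Lemma~\ref{lem: far} exactly as you outline. One small remark: what you flag as ``the hard part''---the vanishing of the summands with $Q_w\supsetneq Q$ under cuspidal projection---is in fact immediate from \eqref{eq: cEis} applied on $M_Q$ (a proper Eisenstein series is orthogonal to cusp forms), and the paper dispatches it in a single clause; the genuine content lies in Lemma~\ref{lem: far} and Theorem~\ref{thm: lead}, which you invoke correctly.
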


The proposition will be proved below. It is a consequence of the computation of the constant term of Eisenstein series in terms of intertwining
operators, which is analogous to the computation of the Jacquet module of induced representations in the local case \cite{MR0579172}.

\subsection{Geometric Lemma}
For any $Q\subset P$ we will consider $\chars_P$ as a subgroup of $\chars_Q$
(by restricting a character of $M_P(\A)/M_P(\A)^1$ to $M_Q(\A)$) \cite{MR1361168}*{I.1.4}.

\begin{lemma} \label{lem: GL}
For any $Q\in\stds$ we have
\begin{equation} \label{eq: EserCT}
\cnst_{G,Q}\psi=\sum_{w\in\,_Q\Weyl_P}E^Q(M(w,\lambda)(\cnst_{P,P_w}\varphi),w\lambda)
\end{equation}
where the superscript indicates that we replace the sum over $P(F)\bs G(F)$ in \eqref{def: eisen}
by the sum over $Q_w(F)\bs Q(F)$.
In particular, by \eqref{eq: cEis} and \eqref{eq: commcusp},
\begin{equation} \label{eq: consterm}
\cnst_{G,Q}^{\cusp}\psi=\sum_{w\in\Weyl^{\supset Q}(P)}[M(w,\lambda)(\cnst_{P,P_w}\varphi)]_{w\lambda}^{\cusp}
=\sum_{w\in\Weyl^{\supset Q}(P)}[M(w,\lambda)(\cnst_{P,P_w}^{\cusp}\varphi)]_{w\lambda}.
\end{equation}
\end{lemma}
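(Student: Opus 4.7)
The plan is to unfold the constant-term integral via the Bruhat decomposition. I would start from
\[
\cnst_{G,Q}\psi(g) = \int_{U_Q(F)\bs U_Q(\A)} \sum_{\gamma \in P(F)\bs G(F)} \varphi_\lambda(\gamma u g)\, du,
\]
which converges absolutely under the hypothesis $\sprod{\Re\lambda}{\alpha^\vee}\gg 0$. Lemma \ref{lem: bruhat} partitions $P(F)\bs G(F)$ as $\bigcup_{w \in {}_Q\Weyl_P} P(F)\bs P(F)wQ(F)$, and for each $w$ identifies $P(F)\bs P(F)wQ(F)$ with $(w^{-1}P(F)w \cap Q(F))\bs Q(F)$ via $P(F)wq \mapsto (w^{-1}Pw\cap Q)(F)\cdot q$. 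The contribution from the double coset of $w$ should be processed independently.

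For each fixed $w \in {}_Q\Weyl_P$, I would analyze the stabilizer $w^{-1}Pw \cap Q$ using the structural identities of Lemma \ref{lem: bruhat}. Specifically, the identities $U_Q \cap wPw^{-1} = U_Q \cap wU_{P_w}w^{-1}$ and $U_{Q_w} = (U_{Q_w}\cap wU_{P_w}w^{-1})\, U_Q$ allow one to split the contribution of $w$ into three pieces: (i) an integration over $(wU_{P_w}w^{-1}\cap U_Q)(\A)\bs U_Q(\A)$, which is exactly the one defining $M(w,\lambda)$; (ii) an integration over $U_{P_w}(F)\bs U_{P_w}(\A)$ producing the constant term $\cnst_{P,P_w}\varphi$; and (iii) a discrete sum over $Q_w(F)\bs Q(F)$ which is precisely the ``inner'' Eisenstein sum $E^Q$ from $Q_w$ up to $Q$. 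Reassembling these ingredients, and using the character computation \eqref{eq: monm} to see that the resulting automorphic form on $\autspace_{Q_w}$ carries the exponent $w\lambda$, yields
\[
\sum_{w \in {}_Q\Weyl_P} E^Q\bigl(M(w,\lambda)(\cnst_{P,P_w}\varphi), w\lambda\bigr),
\]
establishing \eqref{eq: EserCT}.

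For \eqref{eq: consterm} I would apply the cuspidal projection on $M_Q$ to both sides. The inner Eisenstein series $E^Q$ built from a proper parabolic $Q_w \subsetneq Q$ contributes only to the non-cuspidal part of $\AF_Q$, so these terms die after projection, leaving only the contributions with $Q_w = Q$, namely $w \in \Weyl(P;Q)$, for which $E^Q$ is the single summand. The two displayed forms are then reconciled via \eqref{eq: commcusp}, which shows that cuspidal projection commutes with $M(w,\lambda)$ in the range of absolute convergence.

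The main technical hurdle is step two, the bookkeeping that turns the double-coset integral into the clean product of an intertwining integral, a $U_{P_w}$-constant term, and a discrete Eisenstein sum. This reduces to the Iwasawa-type decomposition provided by Lemma \ref{lem: bruhat} together with the compatibility of the normalized Haar measures on $U_Q(F)\bs U_Q(\A)$, $U_{P_w}(F)\bs U_{P_w}(\A)$, and $(wU_{P_w}w^{-1}\cap U_Q)(\A)\bs U_Q(\A)$. Once that is cleanly organized, everything else follows by Fubini and the absolute convergence built into the hypothesis on $\Re\lambda$.
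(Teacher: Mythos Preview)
Your proposal is correct and follows essentially the same route as the paper's proof: both unfold the constant-term integral via the Bruhat decomposition (Lemma \ref{lem: bruhat}), then use the structural identities for $U_Q \cap wPw^{-1}$ and $U_{Q_w}$ to factor each double-coset contribution into an intertwining integral, a $U_{P_w}$-constant term, and a discrete sum over $Q_w(F)\bs Q(F)$. Your treatment of \eqref{eq: consterm} via \eqref{eq: cEis} (applied to $E^Q$ when $Q_w\subsetneq Q$) and \eqref{eq: commcusp} also matches the paper exactly.
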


Each summand on the right-hand side of \eqref{eq: EserCT} is a composition of three operations:
taking a constant term (from $\AF_P$ to $\AF_{P_w}$), intertwining operator (from $\AF_{P_w}$
to $\AF_{Q_w}$, where we view $w$ as an element of $\Weyl(P_w,Q_w)$) and Eisenstein series (from $\AF_{Q_w}$ to $\AF_Q$).
The last two operations are taken in their range of convergence.

\begin{proof}
The lemma is a straightforward generalization of the computation of \cite{MR1361168}*{II.1.7}.
For completeness we provide a proof. By Bruhat decomposition (Lemma \ref{lem: bruhat}), we can write
\begin{align*}
\psi(g)&=\sum_{w\in\,_Q\Weyl_P}\sum_{\gamma\in (Q\cap wPw^{-1})(F)\bs Q(F)}\varphi_\lambda(w^{-1}\gamma g)
\\&=\sum_{w\in\,_Q\Weyl_P}\sum_{\gamma\in Q_w(F)\bs Q(F)}
\sum_{v\in (U_Q\cap wU_{P_w}w^{-1})(F)\bs U_Q(F)}\varphi_\lambda(w^{-1}v\gamma g).
\end{align*}
Therefore, $\cnst_{G,Q}\psi(g)$ is the sum over $w\in\,_Q\Weyl_P$ and $\gamma\in Q_w(F)\bs Q(F)$ of
\begin{align*}
&\int_{U_Q(F)\bs U_Q(\A)}\sum_{v\in (U_Q\cap wU_{P_w}w^{-1})(F)\bs U_Q(F)}\varphi_\lambda(w^{-1}v\gamma ug)\ du\\=&
\int_{U_Q(F)\bs U_Q(\A)}\sum_{v\in (U_Q\cap wU_{P_w}w^{-1})(F)\bs U_Q(F)}\varphi_\lambda(w^{-1}vu\gamma g)\ du\\=&
\int_{(U_Q\cap wU_{P_w}w^{-1})(F)\bs U_Q(\A)}\varphi_\lambda(w^{-1}u\gamma g)\ du,
\end{align*}
which we write as the integral over $u\in (U_Q\cap wU_{P_w}w^{-1})(\A)\bs U_Q(\A)$ of
\begin{align*}
&\int_{(U_Q\cap wU_{P_w}w^{-1})(F)\bs (U_Q\cap wU_{P_w}w^{-1})(\A)}\varphi_\lambda(w^{-1}vu\gamma g)\ dv
\\=&\int_{(U_{P_w}\cap w^{-1}U_Qw)(F)\bs (U_{P_w}\cap w^{-1}U_Qw)(\A)}\varphi_\lambda(vw^{-1}u\gamma g)\ dv
\\=&\int_{(M\cap U_{P_w})(F)\bs (M\cap U_{P_w})(\A)}\varphi_\lambda(vw^{-1}u\gamma g)\ dv
=\cnst_{P,P_w}\varphi_\lambda(w^{-1}u\gamma g).
\end{align*}
On the other hand, (again by Lemma \ref{lem: bruhat}) for any $\varphi'\in\AF_{P_w}$ and $g\in G(\A)$ we have
\begin{align*}
M(w,\lambda)\varphi'_\lambda(g)&=\int_{(U_{Q_w}\cap wU_{P_w}w^{-1})(\A)\bs U_{Q_w}(\A)}\varphi'_\lambda(w^{-1}ug)\ du
\\&=\int_{(U_Q\cap wU_{P_w}w^{-1})(\A)\bs U_Q(\A)}\varphi'_\lambda(w^{-1}ug)\ du.
\end{align*}
The lemma follows.
\end{proof}

For any $Q\subset P$ and $\underline{\mu}=(\mu_1,\dots,\mu_n)\in\charsA_Q^n$
we write $\underline{\mu}+\tilde\lambda=(\mu_1+\tilde\lambda,\dots,\mu_n+\tilde\lambda)$
where $\tilde\lambda$ is the image of $\lambda$, viewed as an element of $\chars_Q$, under $\chars_Q\rightarrow\charsA_Q$.

By \eqref{eq: Mwexp} we conclude:

\begin{corollary} \label{cor: uniq}
For any $Q\subset P$ let $m_Q\ge0$ (resp., $m'_Q\ge0$) be an integer and
$\underline{\mu}_Q\in\charsA_Q^{m_Q}$ (resp., $\underline{\mu}_Q^{\cusp}\in\charsA_Q^{m'_Q}$)
be such that $\cnst_{P,Q}\varphi\in\AF_Q(\underline{\mu}_Q)$ (resp., $\cnst_{P,Q}^{\cusp}\varphi\in\AF_Q^{\cusp}(\underline{\mu}_Q^{\cusp})$).
Then, for any $Q\in\stds$ we have
\[
\cnst_{G,Q}\psi\in\AF_Q(\vee_{w\in\,_Q\Weyl_P}w(\underline{\mu}_{P_w}+\tilde\lambda))
\]
(see \eqref{eq: defpe})\footnote{The coordinates of $w(\underline{\mu}_{P_w}+\tilde\lambda)$ are in $\charsA_{Q_w}$.
By our convention the space $\AF_Q(\dots)$ depends only on the image of these coordinates under $\charsA_{Q_w}\rightarrow\charsA_Q$.}
and
\[
\cnst_{G,Q}^{\cusp}(\psi)\in\AF_Q^{\cusp}(\vee_{w\in\Weyl^{\supset Q}(P)}w(\underline{\mu}_{P_w}^{\cusp}+\tilde\lambda)).
\]
where $\vee$ denotes concatenation in an arbitrary order. Hence,
\begin{equation} \label{eq: phiqexpc}
\Exp_Q^{\cusp}(\psi)\subset\bigcup_{w\in\Weyl^{\supset Q}(P)}w(\Exp_{P_w}^{\cusp}(\varphi)+\tilde\lambda)
\end{equation}
Moreover, for any $w'\in\Weyl(P,P')$, $P'\in\stds$ we have
\begin{equation} \label{eq: cuspew}
\cnst_{G,Q}^{\cusp}(\psi)-\cnst_{P',Q}^{\cusp}([M(w',\lambda)\varphi]_{w'\lambda})
\in\AF_Q^{\cusp}(\vee_{w\in\Weyl^{\supset Q}(P)\setminus\{w'\}}w(\underline{\mu}_{P_w}^{\cusp}+\tilde\lambda)).
\end{equation}
(Recall that by our convention, the second term on the left-hand side is interpreted as $0$ unless $P'\supset Q$,
i.e., unless $w'\in\Weyl^{\supset Q}(P)$.)
In particular,
\begin{equation} \label{eq: cuspe}
\cnst_{G,Q}^{\cusp}(\psi)-\cnst_{P,Q}^{\cusp}(\varphi_\lambda)\in
\AF_Q^{\cusp}(\vee_{w\in\Weyl^{\supset Q}(P)\setminus\{e\}}w(\underline{\mu}_{P_w}^{\cusp}+\tilde\lambda)).
\end{equation}
\end{corollary}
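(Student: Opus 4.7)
The plan is to derive everything from the geometric lemma (Lemma \ref{lem: GL}) by a careful bookkeeping of polynomial-exponential exponents through the three operations that compose each summand on the right-hand side of \eqref{eq: EserCT}: the cuspidal constant term $\varphi \mapsto \cnst_{P,P_w}\varphi$, the intertwining operator $M(w,\lambda)\colon \AF_{P_w}\to \AF_{Q_w}$, and the (partial) Eisenstein series $E^Q(\cdot,w\lambda)\colon \AF_{Q_w}\to \AF_Q$. By hypothesis $\cnst_{P,P_w}\varphi \in \AF_{P_w}(\underline{\mu}_{P_w})$ and $\cnst_{P,P_w}^{\cusp}\varphi \in \AF_{P_w}^{\cusp}(\underline{\mu}_{P_w}^{\cusp})$, so this reduces to understanding how the other two operations act on the spaces $\AF_{P_w}(\underline{\nu})$.

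Formula \eqref{eq: Mwexp} handles the intertwining step: it sends $\AF_{P_w}(\underline{\nu})$ into $\AF_{Q_w}(w\underline{\nu})$ and preserves cuspidality by \eqref{eq: commcusp}. For the Eisenstein step, I would use the elementary fact (noted just before Proposition \ref{prop: unique}) that if $\varphi' \in \AF_{Q_w}(\underline{\nu})$ then $E^Q(\varphi',w\lambda)$ shifts exponents by the image of $w\lambda$ in $\charsA_{Q_w}$, which is $w\tilde{\lambda}$, and then one passes to $\charsA_Q$ by projection as per our convention. Concatenating the resulting tuples via the additivity $\AF_Q(\underline{\nu}_1) + \AF_Q(\underline{\nu}_2) \subset \AF_Q(\underline{\nu}_1 \vee \underline{\nu}_2)$ and summing over $w \in \,_Q\Weyl_P$ yields the first inclusion. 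For the cuspidal inclusion one uses \eqref{eq: consterm}, the restriction of the sum to $\Weyl(P;Q)$ (where $Q_w = Q$, making the Eisenstein step trivial), and cuspidality preservation under $M(w,\lambda)$. The containment \eqref{eq: phiqexpc} is then just the extraction of exponents from the second inclusion.

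For the "moreover" part \eqref{eq: cuspew}, I would single out the $w=w'$ term from \eqref{eq: consterm}. The essential identification to verify is that for $w' \in \Weyl(P,P')$ with $P' \supset Q$,
\[
\bigl[M(w',\lambda)(\cnst_{P,P_{w'}}^{\cusp}\varphi)\bigr]_{w'\lambda} = \cnst_{P',Q}^{\cusp}\bigl([M(w',\lambda)\varphi]_{w'\lambda}\bigr),
\]
where on the left $M(w',\lambda)$ denotes the intertwining operator $\AF_{P_{w'}}\to\AF_Q$ (with $P_{w'} \subset P$ the standard parabolic of $P$ whose Levi is $(w')^{-1}M_Qw'$) and on the right it is the intertwining operator $\AF_P\to\AF_{P'}$. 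Once this identification is in hand, the remaining sum in \eqref{eq: consterm} is indexed by $\Weyl(P;Q)\setminus\{w'\}$, and the same exponent tracking as above gives the claimed inclusion. The "in particular" case \eqref{eq: cuspe} is then the choice $w' = e$, for which $P_e = P$, $P' = P$, and $[M(e,\lambda)\varphi]_\lambda = \varphi_\lambda$.

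The main obstacle is the commutation identity in the previous paragraph, which conceptually says that the $w'$-term in the cuspidal constant term of an Eisenstein series is itself computable as a cuspidal constant term of an intertwined function. I expect this to follow from a direct Fubini-type unfolding using \eqref{eq: monm}, the Bruhat decomposition (Lemma \ref{lem: bruhat}), and \eqref{eq: commcusp}, along the same lines as the proof of Lemma \ref{lem: GL} itself but applied only to a single Bruhat cell; the rest of the argument is purely formal bookkeeping in the framework of Section \ref{sec: polyexp}.
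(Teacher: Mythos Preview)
Your proposal is correct and follows the same route as the paper, which simply records ``By \eqref{eq: Mwexp} we conclude'' and leaves the bookkeeping to the reader. You have made explicit exactly the three ingredients the paper is invoking: the geometric lemma \eqref{eq: EserCT}--\eqref{eq: consterm}, the exponent-tracking \eqref{eq: Mwexp}, and the additivity $\AF_Q(\underline{\nu}_1)+\AF_Q(\underline{\nu}_2)\subset\AF_Q(\underline{\nu}_1\vee\underline{\nu}_2)$.

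One remark on the commutation identity you single out for \eqref{eq: cuspew}. What you need is the equality (before cuspidal projection)
\[
\cnst_{P',Q}\bigl([M(w',\lambda)\varphi]_{w'\lambda}\bigr)=[M(w',\lambda)(\cnst_{P,P_{w'}}\varphi)]_{w'\lambda},
\]
where on the left $M(w',\lambda)\colon\AF_P\to\AF_{P'}$ and on the right $M(w',\lambda)\colon\AF_{P_{w'}}\to\AF_Q$. This is indeed a Fubini computation of the same flavor as Lemma~\ref{lem: GL}: since $w'\in\Weyl(P,P')$ one has $w'(M_P\cap U_{P_{w'}})w'^{-1}=M_{P'}\cap U_Q$, and \eqref{eq: monm} shows that left translation by $M_P\cap U_{P_{w'}}$ on the source matches left translation by $M_{P'}\cap U_Q$ on the target; unwinding the two integral formulas then gives the identity. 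Taking cuspidal projections and using \eqref{eq: commcusp} finishes \eqref{eq: cuspew}. So your expectation is right, and there is no gap.
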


Proposition \ref{prop: unique} now follows from \eqref{eq: cuspe} and Lemma \ref{lem: far}.
Moreover, under our standing condition on $\lambda$,
\begin{equation} \label{eq: Eunique}
\psi=E(\varphi,\lambda)\text{ is the unique automorphic form satisfying \eqref{eq: cuspe}}.
\end{equation}
It is also true and easy to show that the union on the right-hand side of \eqref{eq: phiqexpc} is disjoint,
although we will not use this fact.

\section{Local finiteness} \label{sec: LF}

In this section we prove a local finiteness result (Theorem \ref{thm: mainfin}).
Throughout this section, $F$ is a number field.
%Thus, $\chars_P=\charsA_P=\aaa_P^*\otimes_{\R}\C$.

By convention, a function between measurable spaces is always implicitly assumed to be measurable.

\subsection{Functions of uniform moderate growth} \label{sec: fumg}

In general, we denote by $\delta(g)$ the right translation by $g\in G(\A)$ on
spaces of functions on homogenous spaces of $G(\A)$.
This notation will also be used for the action of $C_c^\infty(G(\A))$, or of $\univ(\gggg)$,
if appropriate.

Recall that we fixed a minimal parabolic subgroup $P_0$ of $G$ (defined over $F$) and
a maximal compact subgroup $\K$ of $G(\A)$.
Let \index{Z@$\zspace$}
\[
\zspace=P_0(F)\bs G(\A).
\]
The function $\Ht_0:G(\A)\rightarrow\aaa_0$ descends to a proper function
\[
\Ht_0:\zspace\rightarrow\aaa_0
\]
that factors through $\autspace_{P_0}$.
This function has the following strong uniform continuity property:
for any compact set $C\subset G(\A)$ the set
\begin{equation} \label{eq: Huc}
\{\Ht_0(xg)-\Ht_0(x)\mid x\in\zspace,g\in C\}\text{ is compact}.
\end{equation}
Indeed, it is equal to $\{\Ht_0(kg))\mid k\in\K,g\in C\}$.

Although the geometries of the spaces $\zspace$ and $\aaa_0$ are rather different,
we can nevertheless translate some analytic notions on $\aaa_0$ to $\zspace$.
For instance, on $\aaa_0$ we have the notion of functions of moderate growth, namely
functions that are majorized by (i.e., bounded by a constant multiple of) $e^{\norm{\cdot}}$
for some norm $\norm{\cdot}$ of $\aaa_0$. This notion immediately translates to $\zspace$.
For convenience, fix a $\Weyl$-invariant Euclidean norm $\norm{\cdot}$ on $\aaa_0$.
The space of functions of moderate growth on $\zspace$ is the union over $R>0$
of the Banach spaces $\functb^R(\zspace)$ of functions $f$ on $\zspace$ that
are majorized by $e^{R\norm{\Ht_0(\cdot)}}$. %(Obviously, this union does not depend on the choice of norm on $\aaa_0$.)
The smooth part $\functb_{\smth}^R(\zspace)$ of the space $\functb^R(\zspace)$ is
the union over open subgroups $K$ of $G(\A_f)$ (with the inductive limit topology in the category of \lctvs s)
of the Fr\'echet spaces of smooth, right $K$-invariant functions $f$ on $\zspace$
such that $\delta(X)f$ belongs to $\functb^R(\zspace)$ for all $X\in\univ(\gggg)$.
The union over $R>0$ of $\functb_{\smth}^R(\zspace)$ is by definition, the space
$\umd(\zspace)$ of smooth functions of uniform moderate growth on $\zspace$.

Reduction theory tells us that roughly speaking, we can model the space $\autspace$ in a cone in $\aaa_0$.
(See \cite{MR3026269}*{Chapitre 3}, \cite{MR1603257}*{\S2} and the references therein.)
More precisely, let \index{aaa0@$\aaa_{0,+}$}
\[
\aaa_{0,+}=\{X\in\aaa_0\mid\sprod{\alpha}X>0\text{ for all }\alpha\in\srts_0\}
\]
be the positive (open) Weyl chamber. Fix $T_0\in\aaa_0$ and let \index{Siegel@$\Siegel$}
\[
\Siegel=\Siegel^{T_0}\subset\zspace
\]
be the inverse image of $T_0+\aaa_{0,+}$ under $\Ht_0$.
Thus, $\Siegel$ is open in $\zspace$.
It is essentially a Siegel domain.\footnote{Traditionally, Siegel domains are usually defined
as certain subsets of $G(\A)$. However, it is advantageous to consider them as subsets of $\zspace$.}
Let \index{proj@$\proj$}
\[
\proj:\zspace\rightarrow\autspace
\]
be the projection and \index{projT0@$\proj^{\Siegel}$}
let $\proj^{\Siegel}$ be the restriction of $\proj$ to $\Siegel$.
By reduction theory, the fibers of $\proj^{\Siegel}$ are finite
and their cardinalities and diameters (or more precisely, the diameter of their image under $\Ht_0$)
are uniformly bounded (in terms of $T_0$).
%Moreover, if $\proj^{\Siegel}(x_1)=\proj^{\Siegel}(x_2)$ then $\Ht_0(x_1)-\Ht_0(x_2)$ lies in a compact set
%which depends on $T_0$ only (\cite{MR3026269}*{Lemme 3.5.5}).
Moreover, $\proj^{\Siegel}$ is surjective provided that
$\sprod{\alpha}{T_0}\ll0$ for all $\alpha\in\srts_0$.
We will fix such $T_0$ once and for all.

Thus, for analytic purposes we can model functions on $\autspace$ by functions on $\Siegel$
via the pullback $f^{\Siegel}=f\circ\proj^{\Siegel}$.
In particular, for any $R>0$ we can consider the space of functions on $\Siegel$
that are majorized by $e^{R\norm{\Ht_0(\cdot)}}$, and correspondingly the space $\functb^R(\autspace)$.
The space $\umd(\autspace)$ of functions of uniform moderate growth on $\autspace$ is, by definition,
the union over $R>0$ of the smooth part $\functb^R_{\smth}(\autspace)$ of $\functb^R(\autspace)$.\footnote{This
is equivalent to the prevalent definition in the literature using a height function on $G(\A)$.}
Note that since $\Siegel$ is open in $\zspace$, the space $\functb^R_{\smth}(\Siegel)$ is also well defined.

We remark that the notions of functions of (uniform) moderate growth on $\zspace$ and $\autspace$ are compatible in the sense
that for all $R>0$, the pullback by $\proj$ defines a $G(\A)$-equivariant operator
\[
\funct^R(\autspace)\rightarrow\funct^R(\zspace).
\]
That is, if the pullback of a function on $\autspace$ to $\zspace$ is majorized by $e^{R\norm{\Ht_0(\cdot)}}$
on the Siegel domain, then it is majorized by it throughout $\zspace$.
To see this, we note that there exists a constant $C$ (depending on the choice of $\Siegel$) such that
\[
\norm{\Ht_0(x)}\le \norm{\Ht_0(y)}+C
\]
for any $x\in\Siegel$ and $y\in\zspace$ such that $\proj(x)=\proj(y)$.
Indeed, this follows from the fact that for any $s\in\Weyl$, an element $\gamma$ in the double coset $P_0(F)s P_0(F)$,
and $g\in G(\A)$ we have
\[
s^{-1}\Ht_0(\gamma g)=\Ht_0(g)+\sum x_\beta\beta^\vee
\]
where the sum on the right-hand side is over the positive roots $\beta$ such that $s\beta<0$,
and $x_\beta\in\R$ are bounded below (depending only on $G$). %(cf. \cite{MR3026269}*{Lemma 3.3.2}).
In particular, if $H_0(g)$ lies in a fixed translate of the positive Weyl chamber, then
\[
\norm{\Ht_0(g)}-\norm{\Ht_0(\gamma g)}
\]
is bounded above.

Let $P\in\stds$. \label{sec: relSiegel}
We can define similarly the spaces $\functb^R(\autspace_P)$ by considering \index{Siegelrel@$\Siegel^P$}
the relative Siegel domain $\Siegel^P=\Siegel^{P,T_0}$,
which is the inverse image under $\Ht_0:\zspace_P\rightarrow\aaa_0$ of the translate by $T_0$ of the cone
\[
\{X\in\aaa_0\mid\sprod{\alpha}X>0\text{ for all }\alpha\in\srts_0^P\}.
\]
As before, the space $\umd(\autspace_P)$ of functions of uniform moderate growth on $\autspace_P$ is
the union over $R>0$ of the smooth part $\functb^R_{\smth}(\autspace_P)$ of $\functb^R(\autspace_P)$.

The constant term $f\mapsto\cnst_{G,P}f$ defines operators $\functb^R(\autspace)\rightarrow\functb^R(\autspace_P)$,
$\functb_{\smth}^R(\autspace)\rightarrow\functb_{\smth}^R(\autspace_P)$ and
$\umd(\autspace)\rightarrow\umd(\autspace_P)$.

\subsection{Statement} \label{sec: mainsys}
\begin{comment}
Recall
\[
\autspace=G(F)\bs G(\A),\ \ \autspace_P=P(F)U(\A)\bs G(\A).
\]
$\AAA_P$ acts on $\autspace_P$.
\end{comment}

%For any $\alpha\in\srts_0$ let $P_\alpha$ be the corresponding maximal standard parabolic subgroup of $G$.

\begin{comment}
We complete $\srts_0$ to a basis $\srtscmp$ of $\aaa_0^*$ arbitrarily.
For $\alpha\in\srtscmp\setminus\srts_0$ we write $P_\alpha=G$.
\end{comment}

For any simple root $\alpha\in\srts_0$ let $P_\alpha$ be the corresponding maximal parabolic subgroup of $G$.
Fix once and for all elements $a_\alpha\in\AAA_{P_\alpha}^G$, $\alpha\in\srts_0$ (in particular $a_\alpha$
lies in the center of the Levi part of $P_\alpha$) such that $\sprod{\alpha}{\Ht_0(a_\alpha)}>0$.
Denote by $T_{a_\alpha}$ the twisted action by $a_\alpha$ on functions on $\autspace_{P_{\alpha}}$ \eqref{eq: ta}.

In the case where $\AAA_G\ne1$ (i.e., when there is a nontrivial split torus in the center of $G$)
we also fix elements $z_1,\dots,z_r$ in $\AAA_G$ such that $\Ht_0(z_1),\dots,\Ht_0(z_r)$ is a basis of $\aaa_G$.
The operators $T_{z_j}$ on functions on $\autspace$ are simply translation by $z_j$.

Let $\mnfld$ be a complex manifold.
Suppose that we are given the following data.
\begin{itemize}
\item $I$ is a (possibly infinite) index set and for every $i\in I$,
$h_i(s)$, $s\in\mnfld$ is a holomorphic family of smooth, compactly supported functions on $G(\A)$
(see example \ref{ex: ccinfty}) and $c_i$ is a scalar-valued analytic function on $\mnfld$.
We assume that for every $s\in \mnfld$ there exists $i\in I$ such that $c_i(s)\ne0$.
\item For each $\alpha\in\srts_0$ %\srtscmp$ an integer $m_{\alpha}\ge0$ and
a family $D_\alpha(s)$, $s\in\mnfld$ of monic polynomials of degree $m_{\alpha}$
in one variable whose coefficients depend holomorphically on $s$.
\item If $\AAA_G\ne1$ we have in addition for every $j=1,\dots,r$
a family $\tilde D_j(s)$, $s\in\mnfld$ of monic polynomials of degree $\tilde m_j$
in one variable whose coefficients depend holomorphically on $s$ and whose constant coefficient
is nowhere vanishing on $\mnfld$.
%For $\alpha\in\srtscmp\setminus\srts_0$ we assume that the constant term of $D_\alpha(X,s)$ is nonzero for all $s\in\mnfld$.
\end{itemize}

Consider the holomorphic system $\Xi_{\at}(s)$ of linear equations on $f\in\umd(\autspace)$ given by
%\begin{subequations}
\begin{align*}
%\label{eq: sysmain1}
\delta(h_i(s))f&=c_i(s)f,\ \ i\in I,\\
%\label{eq: sysmain2}
D_{\alpha}(s)(T_{a_\alpha})(\cnst_{G,P_{\alpha}}f)&=0\text{ for every }\alpha\in\srts_0,\\ %cmp.
\tag{in the case $\AAA_G\ne1$}
%\label{eq: sysmain3}
\tilde D_j(s)(T_{z_j})(f)&=0\text{ for every }j=1,\dots,r. %cmp.
\end{align*}
%\end{subequations}
%\Erez{explain notation}

Note that for the second equation we only need to consider the constant term pointwise.
In particular, we do not need to consider function spaces on $\autspace_P$ for $P\subsetneq G$.

\begin{theorem} \label{thm: mainfin}
The system $\Xi_{\at}(s)$ is locally of finite type.
\end{theorem}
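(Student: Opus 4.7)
The plan is to apply the Fredholm criterion (Corollary \ref{cor: fred2}) after reducing $\Xi_{\at}(s)$ to an auxiliary system on Hilbert spaces modeled on neighborhoods of each cusp, following the template of the $\SL_2$ example in \S\ref{sec: SL2}.

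Fix $s_0\in\mnfld$. By hypothesis there is $i_0\in I$ with $c_{i_0}(s_0)\ne0$; shrink $\mnfld$ to a neighborhood $U$ of $s_0$ on which $c_{i_0}$ is non-vanishing, and write $h(s)=h_{i_0}(s)$, $c(s)=c_{i_0}(s)$. The first step is to deal with the non-flatness of the families $\PE_{\AAA_P}(\underline{\lambda}_P(s))$ (the issue flagged in the Remark at the end of \S\ref{sec: SL2}): since $\underline{\lambda}_P(s)$ varies continuously in $s$, after further shrinking $U$ one can pick, for each $P\in\stds$, a fixed tuple $\underline{\mu}_P\in\chars_P^{m_P}$ such that $\PE_{\AAA_P}(\underline{\lambda}_P(s))\subset\PE_{\AAA_P}(\underline{\mu}_P)$ for every $s\in U$. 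It then suffices to prove local finiteness for the weakened system in which the $s$-dependent polynomial-exponential condition is replaced by the $s$-independent one $a\mapsto(a\cdot\cnst_{G,P}f)(g)\in\PE_{\AAA_P}(\underline{\mu}_P)$, since this weaker system still contains $\Sol(\Xi_{\at}(s))$.

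Second, model the cusps by $\zspace_P=U_P(\A)M_P(F)\bs G(\A)$ and a Siegel-type truncation $\zspace_{P,c}$, as in \S\ref{sec: SL2}. The weakened constant-term bound forces solutions to have polynomial-exponential growth of a bounded type along each cuspidal direction; by a standard growth estimate (\cite{MR1361168}*{I.4}) this places each solution in $\functb^N(\autspace_G)$ for a fixed $N$, and hence in a weighted Hilbert space $\funct_N(\autspace_G)$ that embeds as a closed subspace of $\funct_N(\zspace_{P,c})$ via pull-back $\iota_{P,c}$. Decomposing $\funct_N(\zspace_{P,c})$ orthogonally into $\funct_N^{\cusp}(\zspace_{P,c})$ and the closed subspace of functions pulled back from $\AAA_P$, the weakened constraint identifies the non-cuspidal component of a solution with an element of the fixed finite-dimensional space $\PE_{\AAA_P}(\underline{\mu}_P)$. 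The restriction of $\delta(h(s_0))$ to $\funct_N^{\cusp}(\zspace_{P,c})$ is compact by the standard argument recalled in Claim \ref{claim: 3}. The equation $c(s)^{-1}\delta(h(s))f=f$ satisfied by every solution then fits the hypothesis of Corollary \ref{cor: fred2}, which yields, after possibly shrinking $U$, a finite-dimensional $L$ and a holomorphic family of operators $L\to\funct_N(\autspace_G)$ whose images contain $\Sol(\Xi_{\at}(s))$. To land in $\umd(\autspace_G)$ we post-compose with $c(s)^{-1}\delta(h(s))$: this is a holomorphic map $\funct_N(\autspace_G)\to\umd(\autspace_G)$ because $h(s)$ is a holomorphic family of smooth compactly supported functions on $G(\A)$, and solutions are fixed points of this map.

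The principal obstacle I expect is the simultaneous handling of the constant terms along all $P\in\stds$, rather than a single cusp as in the $\SL_2$ case. The cuspidal-versus-constant-term splitting must be carried out coherently along the chain of restriction relations $\cnst_{P,Q}\circ\cnst_{G,P}=\cnst_{G,Q}$, either by induction on the parabolic rank (reducing the analysis on each stratum to the maximal-cusp situation handled above) or by a partition-of-unity argument on a global Siegel domain. Ensuring that the resulting operator retains the compact-plus-finite-rank structure required by Corollary \ref{cor: fred2}, rather than only on each cusp separately, is the technical heart of the argument, and is presumably where the upcoming \S\ref{sec: lempoly} and the related material on auxiliary systems enters.
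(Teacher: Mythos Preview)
Your first reduction step contains a genuine error. You claim that after shrinking $U$ one can choose a fixed tuple $\underline{\mu}_P$ with $\PE_{\AAA_P}(\underline{\lambda}_P(s))\subset\PE_{\AAA_P}(\underline{\mu}_P)$ for all $s\in U$. This is false whenever $\underline{\lambda}_P$ is non-constant and $\dim\aaa_P\ge1$: the space $\PE_{\AAA_P}(\underline{\mu}_P)$ consists of polynomial exponentials whose exponents lie in the \emph{finite} set $\{\mu_1,\dots,\mu_{m_P}\}$, whereas the single function $a\mapsto a^{\lambda_1(s)}$ already requires the exponent $\lambda_1(s)$, which takes uncountably many values as $s$ ranges over $U$. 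Exponential functions with distinct exponents are linearly independent, so no fixed finite-dimensional $\PE$-space can absorb them all. In short, you cannot trade the $s$-dependence of the exponents for a larger but constant system; this is precisely the non-flatness obstruction flagged at the end of \S\ref{sec: SL2}, and your proposed workaround does not resolve it.

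The paper's actual fix is the content of \S\ref{sec: lempoly} (Corollary~\ref{cor: liftPE}): rather than seeking a common containing space, one constructs a \emph{holomorphic family of extrapolation operators} $\liftPE_P^{T_0,R}(\underline{\lambda})$ which, given the restriction of a function to a unit ball in $\AAA_P$, reproduces the function whenever it lies in $\PE_{\AAA_P}(\underline{\lambda})$. These sections enter the auxiliary system $\sysaux(s)$ of \S\ref{sec: auxsystem} as the equation $\liftPE_P^{T_0,R}(\underline{\lambda}_P(s))r_P^{T_0,\ball}f_P^{T_0}=f_P^{T_0}$, which is holomorphic in $s$ and substitutes for your attempted containment. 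Once this is in place, the Hilbert--Schmidt compactness (Lemma~\ref{lem: HS}) and Fredholm criterion apply as you anticipate.

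Your closing paragraph correctly identifies the second real issue: the parabolic hierarchy. The paper handles it not by partition of unity but by setting up the auxiliary system on the direct sum $\oplus_{P\in\stds}\functwt(\Siegel_P^{T_0'})$, recording the compatibility $\CT_{P,Q}^{T_0'}f_P=r_{Q,P}^{T_0'}f_Q$ among the components, and then proving local finiteness of $\pi_P(\Sol(\sysaux(s)))$ by induction on $\dim P$ (\S\ref{sec: pfin}). The induction step feeds the already-established finiteness for $Q\subsetneq P$ into the non-cuspidal summand $\bnch_2$ of Corollary~\ref{cor: fred2} via Lemma~\ref{lem: proj}\ref{part: projcusp}, while compactness on $\csp\functwt(\Siegel_P^{T_0'})$ handles $\bnch_1$.
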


\begin{remark}
\begin{enumerate}
\item As we will recall in \S\ref{sec: HC} below, by a result of Harish-Chandra, such a system is satisfied by Eisenstein series.
\item In the case where $\mnfld$ is a point $s_0$, the theorem amounts to the finite-dimensionality of
the space $\Sol(\Xi_{\at}(s_0))$.
This is a variant of the standard finiteness theorem of Harish-Chandra. (See also Lemma \ref{sec: conc1} below.)
\item %In essence, it is not any harder to prove Theorem \ref{thm: mainfin} in the general case than in the case where
%$\mnfld$ is a single point.
Morally, any reasonable proof in the case $\mnfld=\{s_0\}$ should extend,
at least in principle, to the general case. %\Erez{Reformulate}
%\item In essence, it is not any harder to prove Theorem \ref{thm: mainfin} in the general case than in the case where
%$\mnfld$ is a single point. Put differently, any reasonable proof in the case $\mnfld=\{s_0\}$ should extend,
%at least in principle, to the general case.
\end{enumerate}
\end{remark}

Let us give an outline of the proof of the theorem.
It is based on familiar ideas, except that we have to pay attention to local uniformity in the parameters.

We will assume for simplicity that $\AAA_G=1$. The necessarily modifications for the general case
(which are of bookkeeping nature) will be explained in \S\ref{sec: AGne1}.

First, since the statement is local in $\mnfld$, we may assume without loss of generality
that the coefficients of $D_{\alpha}(s)$ are bounded
and that $\Xi_{\at}(s)$ contains an equation of the form
\[
\delta(h(s))f=f
\]
for a (single) holomorphic family $h(s)$ of smooth compactly supported functions on $G(\A)$.

Next, it is more convenient to work with Hilbert spaces.
For any $\lambda\in\aaa_0^*$ there is a Hilbertian space $\funct^\lambda(\autspace)$ of functions on $\autspace$
that was considered by Franke \cite{MR1603257}.
The union over $\lambda$ of the smooth part of $\funct^\lambda(\autspace)$ coincides with the space $\umd(\autspace)$
of smooth functions of uniform moderate growth on $\autspace$.
%(There is no preferred Hilbert norm, but as a \lctvs{} it is well defined.)
For every $\lambda$ we define a holomorphic system of equations $\Xi^\lambda(s)$ on $\funct^\lambda(\autspace)$.
%for a certain weight function $\tilde w_\lambda$ on $\autspace$ which depends on a parameter .
We will show that for sufficiently positive $\lambda$, $\Xi^\lambda(s)$ is locally of finite type and its solutions
contain those of $\Xi_{\at}(s)$.
By an easy argument, this will imply that $\Xi_{\at}(s)$ is locally of finite type.
%, since $\delta(h(s))$ defines an analytic family of operators
%\[
%\funct^\lambda(\autspace)\rightarrow\umd(\autspace).
%\]
%In fact, we show that $\Xi^\lambda(s)$ is locally, strongly of finite type.
%which depends on a parameter $\lambda\in\aaa_0^*$ which is sufficiently positive.

%Let $\proj:\zspace\rightarrow\autspace$ be the projection map.
%In order to define $\funct^\lambda(\autspace)$ and $\Xi^\lambda(s)$, we fix a Siegel domain $\Siegel$
%in $\zspace=P_0(F)\bs G(\A)$ such that $\proj(\Siegel)=\autspace$.
%For any function $f$ on $\autspace$ we denote by $f^{\Siegel}=f\circ\proj\rest_{\Siegel}$ its pullback to $\Siegel$.
We consider a weighted $L^2$-space
\[
\funct^\lambda(\Siegel)=L^2(\Siegel,w_\lambda^{-2}\ dx)
\]
for a certain explicit weight function $w_\lambda$ (see \S\ref{sec: HS}).
%Here $T_0\in\aaa_0$ is a parameter such that $\sprod{\alpha}{T_0}\ll0$ for all $\alpha\in\srts_0$.
By definition, $\funct^\lambda(\autspace)$ is the space of functions $f$ on $\autspace$ such that
$\norm{f^{\Siegel}}_{\funct^\lambda(\Siegel)}<\infty$.
(Up to equivalence, this norm does not depend on the choice of $\Siegel$.)
%In other words, $f\mapsto f^{\Siegel}$ defines a strict embedding
%\begin{equation} \label{eq: SE}
%\[
%\text{strict embedding }
%\funct^\lambda(\autspace)\rightarrow \funct^\lambda(\Siegel).
%\]
%\end{equation}
%for a certain weight function $w_\lambda$. (The
%Here $w_\lambda=e^{-\sprod{\lambda}{\Ht_0(\cdot)}}$.
%Therefore, we can perform the analysis on  instead.
The advantage of the space $\funct^\lambda(\Siegel)$ is that it admits a Harish-Chandra decomposition
\begin{equation} \label{eq: HCd}
\funct^\lambda(\Siegel)=\oplus_{P\in\stds}\funct^\lambda_{\cusp}(\Siegel_P)
\end{equation}
(see \S\ref{sec: HCdecomp}). Here $\Siegel_P$ is the image of $\Siegel$ under the projection $\zspace\rightarrow\zspace_P=U(\A)P_0(F)\bs G(\A)$.
Moreover $T_{a_\alpha}$ acts on the $P$-th summand of \eqref{eq: HCd} for any $\alpha\in\srts_0\setminus\srts_0^P$.
For any $f\in\funct^\lambda(\autspace)$ we denote by $f_P^{\Siegel}$, $P\in\stds$ the components of $f^{\Siegel}$
with respect to \eqref{eq: HCd}.

\begin{comment}
Unfortunately, it is not possible to encode the system $\Xi^\lambda(s)$ directly on $\funct^\lambda(\Siegel)$,
since the operators $\delta(h(s))$ are not defined there.
However, we can define operators
\[
\delta^{\Siegel',\Siegel}(h(s)):L^2(\Siegel',w_\lambda\ dx)\rightarrow \funct^\lambda(\Siegel)
\]
provided that $T_0-T_0'$ is sufficiently positive with respect to the support of $h(s)$. These operators
are compatible with the decomposition \eqref{eq: HCd}.

Suppose that $\phi$ is a solution of $\Xi^\lambda(s)$. Let $\phi^{\Siegel}$ be the pullback of $\phi$ to $\Siegel$
and similarly for $\phi^{T_0'}$. Let $\phi_P^{\Siegel}$, $P\in\stds$ be the components of $\phi^{\Siegel}$ with respect to
\eqref{eq: HCd}. %; similarly for $\phi_P^{T_0'}$.
Then, for any $P\in\stds$
\begin{gather*}
D_{\alpha}(s)(T_{a_\alpha})\phi_P^{\Siegel}=0\text{ for all }\alpha\in\srts_0\setminus\srts_0^P,\\
\delta^{\Siegel',\Siegel}(h(s))\phi_P^{T_0'}=\phi_P^{\Siegel}.
\end{gather*}
\end{comment}

The system $\Xi^\lambda(s)$ consists of the following equations on $f\in \funct^\lambda(\autspace)$.
%We will show that for sufficiently positive $\lambda\in\aaa_0^*$, the system of equations
\begin{gather*}
D_{\alpha}(s)(T_{a_\alpha})(f_P^{\Siegel})=0\text{ for all }P\in\stds, \alpha\in\srts_0\setminus\srts_0^P,\\
%(\delta^{\Siegel',\Siegel}(h(s))\phi_P^{T_0'})\rest_{\Siegel_P^{\ball}}=\phi_P^{\Siegel}\rest_{\Siegel_P^{\ball}},
f_P^{\Siegel}\rest_{\Siegel_P^{\ball}}=(\delta(h(s))f)_P^{\Siegel}\rest_{\Siegel_P^{\ball}}\text{ for all }P\in\stds.
\end{gather*}
Here $\Siegel_P^{\ball}$ is a certain subset of $\Siegel_P$ obtained by bounding the directions along the simple roots outside $\srts_0^P$ (see \eqref{def: spball}).

We show that $\Xi^\lambda(s)$ satisfies the conditions of the Fredholm criterion (Corollary \ref{cor: fred2}) provided that $\lambda$ is sufficiently positive.
More precisely, let $\funct^\lambda(\Siegel_P^{\ball})=L^2(\Siegel_P^{\ball},w_\lambda^{-2}\ dx)$. Define operators
\begin{subequations} \label{eq: munu}
\begin{equation}
\mu_s,\nu_s:\funct^\lambda(\autspace)\rightarrow\oplus_{P\in\stds}\big(\funct^\lambda(\Siegel_P)^{\srts_0\setminus\srts_0^P}
\oplus \funct^\lambda(\Siegel_P^{\ball})\big)
\end{equation}
by
\begin{gather}
\label{eq: defmu} \mu_sf=\big((D_{\alpha}(s)(T_{a_\alpha})(f_P^{\Siegel}))_{\alpha\in\srts_0\setminus\srts_0^P},
f_P^{\Siegel}\rest_{\Siegel_P^{\ball}}\big)_{P\in\stds}\\
\nu_sf=\big(0,(\delta(h(s))f)_P^{\Siegel}\rest_{\Siegel_P^{\ball}}\big)_{P\in\stds}.
\end{gather}
\end{subequations}
Then, the equations in $\Xi^\lambda(s)$ can be rewritten as
\[
\mu_sf=\nu_sf.
\]
We show that (assuming, as we recall $\AAA_G=1$)
\begin{enumerate}
\item If $\lambda$ is sufficiently positive, then $\mu_s$ is a strict embedding for every $s\in\mnfld$.
\item $\nu_s$ is compact (and in fact, Hilbert--Schmidt) for every $s\in\mnfld$.
\end{enumerate}
\begin{comment}
\begin{enumerate}
\item The operators
\begin{align*}
\funct^\lambda(\autspace)&\rightarrow\oplus_{P\in\stds}\big(\funct^\lambda(\Siegel_P)^{\srts_0\setminus\srts_0^P}
\oplus \funct^\lambda(\Siegel_P^{\ball})\big)\\
f&\mapsto\big((D_{\alpha}(s)(T_{a_\alpha})(f_P^{\Siegel}))_{\alpha\in\srts_0\setminus\srts_0^P},
f_P^{\Siegel}\rest_{\Siegel_P^{\ball}}\big)_{P\in\stds}
\end{align*}
are strict embeddings for every $s\in\mnfld$.
\item For every $P\in\stds$ the operators
\[
\funct^\lambda(\autspace)\rightarrow \funct^\lambda(\Siegel_P^{\ball})=L^2(\Siegel_P^{\ball},w_\lambda^{-2}\ dx),\ \
f\mapsto (\delta(h(s))f)_P^{\Siegel}\rest_{\Siegel_P^{\ball}}
\]
for every $s\in\mnfld$.
\end{enumerate}
\end{comment}
Both claims are proved using analysis on the Siegel domain.
For the first one, we write $\mu_s$ as the composition of the strict embedding
\[
\funct^\lambda(\autspace)\rightarrow\oplus_{P\in\stds}\funct^\lambda(\Siegel_P),\ \ f\mapsto (f_P^{\Siegel})_{P\in\stds}
\]
with the direct sum over $P\in\stds$ of the operators
\begin{align*}
\funct^\lambda(\Siegel_P)&\rightarrow \funct^\lambda(\Siegel_P)^{\srts_0\setminus\srts_0^P}
\oplus \funct^\lambda(\Siegel_P^{\ball})\\
f&\mapsto \big((D_{\alpha}(s)(T_{a_\alpha})(f))_{\alpha\in\srts_0\setminus\srts_0^P},f\rest_{\Siegel_P^{\ball}}\big).
\end{align*}
It is easy to show that these operators are strict embeddings provided that
$e^{\sprod{\lambda}{\Ht_0(a_\alpha)}}>\abs{r}$ for every root $r$ of $D_\alpha(s)$
(see Proposition \ref{prop: IP(s)},
which also implies that the solutions of $\Xi_{\at}(s)$ are in $\funct^\lambda(\autspace)$).
As in the $\SL_2$ case (\S\ref{sec: SL2}), this essentially boils down to the elementary fact that the operator
\[
L^2(\R_+,e^{-2ax}\ dx)\rightarrow L^2(\R_+,e^{-2ax}\ dx)\oplus L^2([0,1]),\ \ f\mapsto (f(x+1)-rf(x),f\rest_{[0,1]})
\]
is a strict embedding provided that $e^a>\abs{r}$.

The compactness of $\nu_s$ is a standard result (cf.\ \cite{MR0232893}*{\S I.4} and Lemma \ref{lem: GGPS} below).

In the following subsections we fill in the details in the proof above.

\subsection{Harish-Chandra's decomposition \cite{MR0232893}*{\S I.3}} \label{sec: HCdecomp}
Recall
\[
\zspace=P_0(F)\bs G(\A).
\]
More generally, for any $P=M\ltimes U\in\stds$ let \index{ZP@$\zspace_P$}
\[
\zspace_P=P_0(F)U(\A)\bs G(\A).
\]
In particular, $\zspace=\zspace_G$ and $\zspace_{P_0}=\autspace_{P_0}$.

\begin{remark}
Let $P_0^M=P_0\cap M$, a minimal parabolic subgroup of $M$ defined over $F$ and $\zspace^M=P_0^M(F)\bs M(\A)$,
the analogue of $\zspace$ with respect to $M$. Then, $\zspace_P$ is the fibered product
\[
\zspace_P=P_0^M(F)U(\A)\bs G(\A)=\zspace^M\times_{M(\A)\cap\K}\K.
\]
Thus, working with functions on $\zspace_P$ is essentially the same as working with functions on $\zspace^M$.
\end{remark}

We have a proper surjection \index{betaP@$\beta_P$}
\[
\beta_P:\zspace\rightarrow\zspace_P.
\]
\begin{comment}
For any integrable function $f$ on $\zspace_P$ we have
\begin{equation} \label{eq: betaP}
\int_{\zspace}f\circ\beta_P(g)\ dx=\int_{\zspace_P}f(g)\ dx.
\end{equation}
\end{comment}
We identify the space $L^1_{\loc}(\zspace_P)$ of locally $L^1$ functions on $\zspace_P$ with a subspace
of $L^1_{\loc}(\zspace)$ via the pullback by $\beta_P$.
Define constant term projections \index{CPQ@$\CT_P$}
\[
\CT_P:L^1_{\loc}(\zspace)\rightarrow L^1_{\loc}(\zspace_P),\ \
\CT_Pf(g)=\int_{U(F)\bs U(\A)}f(ug)\ du,\ \ g\in \zspace.
\]
These maps are $G(\A)$-equivariant.
\begin{comment}
If $f\in L^1(\zspace,dg)$, then
\begin{equation} \label{eq: intsame}
\int_{\zspace}f(g)\ dx=\int_{\zspace_P}\CT_Pf(g)\ dx.
\end{equation}
This follows from uniqueness of $G(\A)$-invariant functional on $L^1(\zspace_P)$.
\end{comment}

For any $P_1,P_2\in\stds$ we have $\CT_{P_1\cap P_2}=\CT_{P_1}\circ \CT_{P_2}$
(since $U_{P_1}U_{P_2}=U_{P_1\cap P_2}$).
In particular, the operators $\CT_P$, $P\in\stds$ pairwise commute.
Let $L^1_{\loc,\cusp}(\zspace_P)$ be the cuspidal part of $L^1_{\loc}(\zspace_P)$, i.e.,
\[
L^1_{\loc,\cusp}(\zspace_P)=\bigcap_{Q\subsetneq P}\Ker(\CT_Q\rest_{L^1_{\loc}(\zspace_P)})\subset L^1_{\loc}(\zspace_P).
\]
Then, we have a direct sum decomposition
\begin{equation} \label{eq: orthd1}
L^1_{\loc}(\zspace)=\oplus_{P\in\stds}L^1_{\loc,\cusp}(\zspace_P).
\end{equation}
The projection on $L^1_{\loc,\cusp}(\zspace_P)$ is given by
\[
\sum_{Q\subset P}(-1)^{\dim\aaa_Q^P}\CT_Q.
\]

The decomposition \eqref{eq: orthd1} also holds for smaller classes of functions on $\zspace$ such as continuous, smooth,
of uniform moderate growth, etc.
Another useful example is a weighted $L^2$ space $L^2(\zspace;w\ dx)$ where $w:\zspace\rightarrow\R_{\ge0}$
is a locally bounded function that factors through $\beta_{P_0}$.
For any $P\in\stds$ we can identify $L^2(\zspace_P;w\ dx)$ with the closed subspace
of functions in $L^2(\zspace;w\ dx)$ that factor through $\beta_P$, and the constant term map
\[
%\CT_P^w:
L^2(\zspace;w\ dx)\rightarrow L^2(\zspace_P;w\ dx),\ \ f\mapsto\int_{U(F)\bs U(\A)}f(ug)\ du
\]
is the orthogonal projection.
We therefore get an orthogonal decomposition
\[
L^2(\zspace;w\ dx)=\oplus_{P\in\stds}L^2_{\cusp}(\zspace_P;w\ dx).
\]

\begin{comment}
We also note that the restriction of $\CT_P^{\Siegel}$ to $\functwt(\autspace)$ agrees with $\cnst_P$, i.e.
for any $f\in\functwt(\autspace)$
\begin{equation} \label{eq: ctt0}
\CT_P^{\Siegel}\iota^{\aut,T_0}f=(\cnst_Pf)\circ\proj_P^{\Siegel}
\end{equation}
where $\proj_P^{\Siegel}$ is the restriction of the projection $\zspace_P\rightarrow\autspace_P$ to $\Siegel_P$.
\end{comment}

\label{sec: HS}

We will apply it in the following situation.
For any $\lambda\in\aaa_0^*$ let $w_\lambda:\zspace\rightarrow\R_{>0}$ be the function
\[
w_\lambda(g)=e^{\sprod{\lambda}{\Ht_0(g)}}
\]
and consider the weighted $L^2$-space with $w=w_\lambda^{-2}$
\[
\funct^\lambda(\Siegel)=L^2(\Siegel,w_\lambda^{-2}\ dx).
\]
Then as before, we have an orthogonal decomposition
\begin{equation} \label{eq: orthd}
\funct^\lambda(\Siegel)=\oplus_{P\in\stds}\funct^\lambda_{\cusp}(\Siegel_P)
\end{equation}
where $\Siegel_P$ is the image of $\Siegel$ under $\beta_P$.
%(It is strictly contained in the relative Siegel domain $\Siegel^P$ defined in \S\ref{sec: relSiegel}.)

The family of spaces $\funct^\lambda(\Siegel)$, $\lambda\in\aaa_0^*$ is monotonous in the sense that
if all the coefficients of $\lambda'-\lambda$ with respect to the simple roots $\srts_0$ are non-negative,
then $\funct^\lambda(\Siegel)\subset\funct^{\lambda'}(\Siegel)$.

We will say that $\lambda\in\aaa_0^*$ is sufficiently positive (depending on the context) if the coefficients
of $\lambda$ with respect to $\srts_0$ are sufficiently large.

\subsection{The spaces $\funct^\lambda(\autspace)$}

Define $\funct^\lambda(\autspace)$ to be the space of functions $f$ on $\autspace$ such that
the induced norm from $\funct^\lambda(\Siegel)$ via the pullback $f\mapsto f^{\Siegel}$ is finite.
%In other words, $f\mapsto f^{\Siegel}$ defines a closed embedding
%\[
%\funct^\lambda(\autspace)\rightarrow \funct^\lambda(\Siegel).
%\]
As a Hilbertian space, $\funct^\lambda(\autspace)$ does not depend on the choice of $\Siegel$.
Alternatively, we may view $\funct^\lambda(\autspace)$ (as a Hilbertian space) as a weighted $L^2$ space
$L^2(\autspace,(w_\lambda\circ\sigma)^{-2}\ dx)$
for any right inverse $\sigma:\autspace\rightarrow\Siegel$ of $\proj^{\Siegel}$ (cf.\ \cite{MR1603257}*{\S2}).
%(\tilde w_\lambda^{\Siegel})^{-2}
%for any function $w$ on $\autspace$ such that $w(x)\in w_\lambda((\proj^{\Siegel})^{-1}(x))$ for every $x\in\autspace$.
%(For instance we can take
% where $\tilde w_\lambda^{\Siegel}$
%is either of the (equivalent) weights
% (\tilde w_\lambda^{\Siegel})^{-2})$ with weight function
%\[
%\tilde w_\lambda^{\Siegel}(x)=
%$w=\max_{(\proj^{\Siegel})^{-1}(x)}w_\lambda$.)
%\text{ or }w=\min_{(\proj^{\Siegel})^{-1}(x)}w_\lambda.
%\]
(All such weights are equivalent, also when we vary $\Siegel$.)

This is because for any nonnegative function $f$ on $\autspace$ we have
\[
\int_{\Siegel}f^{\Siegel}(x)\ dx=\int_{\autspace}f(x)\ \#((\proj^{\Siegel})^{-1}(x))\ dx
\]
and hence
%\begin{equation} \label{eq: clsdembd}
\[
\int_{\autspace}f(x)\ dx\le\int_{\Siegel}f^{\Siegel}(x)\ dx\le c_1\int_{\autspace}f(x)\ dx
\]
%\end{equation}
where $c_1$ is a constant (depending on the choice of $\Siegel$).

%Of course, $\tilde w_\lambda^{\Siegel}$
%depends on $\Siegel$ but a different choice of $\Siegel$
%gives an equivalent weight.
%but if $T_0'$ is another point such that
%$\proj(\Siegel')=\autspace$, then
%\[
%c_1\tilde w_\lambda^{\Siegel}(x)\le \tilde w_\lambda^{\Siegel'}(x)\le c_2\tilde w_\lambda^{\Siegel}(x),\ \ x\in\autspace
%\]
%for some constants $c_1,c_2>0$ depending only on $\lambda$, $T_0$ and $T_0'$.

The group $G(\A)$ acts on $\funct^\lambda(\autspace)$ by right translation.

Indeed, by \eqref{eq: Huc}, for any compact set $C\subset G(\A)$, the ratio $\frac{w_\lambda(xg)}{w_\lambda(x)}$
is bounded uniformly in $x\in\zspace$ and $g\in C$.
%, since
%\begin{equation} \label{eq: chngcmpt}
%\sup_{g\in G(\A),x\in C}\wgt(gx)\wgt(g)^{-1}=\sup_{k\in\K,x\in C}\wgt(kx)<\infty.
%\end{equation}
The analogous property for $w_\lambda\circ\sigma$ immediately follows.

%\S\ref{sec: fumg}

Recall that we defined the space $\umd(\autspace)$ of functions of uniform moderate growth on $\autspace$ using pointwise bounds
(i.e., as the union over $R>0$ of the spaces $\functb^R_{\smth}(\autspace)$).
Alternatively, We can define $\umd(\autspace)$ equivalently using the Hilbert spaces $\funct^\lambda(\autspace)$.
This is because we can compare the spaces $\functb^R(\autspace)$ and $\funct^\lambda(\autspace)$ as follows.

\begin{lemma} \label{lem: GGPS}
\begin{enumerate}
\item Any function of moderate growth on $\autspace$ belongs to
$\funct^\lambda(\autspace)$ for $\lambda$ sufficiently positive.  %\cite{MR1361168}*{I.2.2}
That is, for every $R>0$ we have a continuous embedding
\[
\functb^R(\autspace)\subset \funct^\lambda(\autspace)
\]
provided that $\lambda\in\aaa_0^*$ is sufficiently positive.
\item In the other direction, for any $\lambda\in\aaa_0^*$ there exists $R>0$ such that for any
bounded, compactly supported function $h$ on $G(\A)$,
$\delta(h)$ defines an operator from $\funct^\lambda(\autspace)$ to $\functb^R(\autspace)$.
If moreover $h$ is smooth, then
$\delta(h)$ defines an operator from $\funct^\lambda(\autspace)$ to $\functb_{\smth}^R(\autspace)$.

\item
Let $P\in\stds$. Let $\Siegel_P^{\ball}$ be a subset of $\Siegel_P$ of the form
\[
\Siegel_P^{\ball}=\{g\in\Siegel_P\mid \sprod{\alpha}{\Ht_0(g)-T_0}\le c_\alpha\text{ for all
}\alpha\in\srts_0\setminus\srts_0^P\}
\]
for some constants $c_\alpha$, $\alpha\in\srts_0\setminus\srts_0^P$.
Let $h$ be a compactly supported smooth function on $G(\A)$.
Then, the operator
\[
\funct^\lambda(\autspace)\rightarrow\funct^\lambda(\Siegel_P^{\ball}),\ \
f\mapsto(\delta(h)f)_P^{\Siegel}\rest_{\Siegel_P^{\ball}}
\]
is a Hilbert--Schmidt operator, and in particular compact.
%The following is a version of a familiar result due to Gelfand and Piatetski-Shapiro (cf.\ \cite{MR0232893}*{Theorem 2}).
\end{enumerate}
\end{lemma}

\begin{proof}
This is standard. All parts are proved using analysis on the Siegel domain.

The first part is clear.

%Of course we do not have containment in the other direction.
%However,

For the second part, let $h$ be a bounded, compactly supported function on $G(\A)$.
Although $\delta(h)$ does not factor through $L^1_{\loc}(\Siegel)$,
we can model it using an auxiliary Siegel domain.
Namely, by \eqref{eq: Huc} we can find a Siegel domain $\Siegel'$ that contains the right translate of $\Siegel$
by the support of $h$.
%(Namely, $\Siegel'=\Siegel^{T_0'}$ where $T_0'$ is such that $\sprod{\alpha}{T_0-T_0'}\gg0$
%for all $\alpha\in\srts_0$ -- see .)
Therefore, the convolution operator %\index{dbelP@$\delta^{\Siegel',\Siegel}(h)$}
\[
\delta^{\Siegel',\Siegel}(h):L^1_{\loc}(\Siegel')\rightarrow L^1_{\loc}(\Siegel),
\ \ f\mapsto\int_{G(\A)}h(g)f(\cdot g)\ dg
\]
is well defined and we have a commutative diagram
%$(\delta(h)f)^{\Siegel}=\delta^{\Siegel',\Siegel}(h)(f^{\Siegel'})$.
\[
\begin{tikzcd}
L^1_{\loc}(\autspace)\arrow{r}{f^{\Siegel'}}\arrow{d}{\delta(h)} &
L^1_{\loc}(\Siegel')\arrow{d}{\delta^{\Siegel',\Siegel}(h)}\\
L^1_{\loc}(\autspace)\arrow{r}{f^{\Siegel}}&L^1_{\loc}(\Siegel)
\end{tikzcd}
\]

By realizing $\delta^{\Siegel',\Siegel}(h)$ as an integral operator and estimating its kernel
(see e.g., ~\cite{MR1361168}*{I.2.5}) one shows that for $R\gg0$, $\delta^{\Siegel',\Siegel}(h)$ defines an operator
\[
\delta^{\Siegel',\Siegel}(h):\funct^\lambda(\Siegel')\rightarrow\functb^R(\Siegel).
\]
The second part follows.

To show the third part, note that $\delta^{\Siegel',\Siegel}(h)$ commutes with the constant term projections.
Hence, it induces operators
\[
\delta^{\Siegel'_P,\Siegel_P}(h):\funct^\lambda(\Siegel_P')\rightarrow\funct^\lambda(\Siegel_P)
\]
and respects the Harish-Chandra decomposition \eqref{eq: orthd}.

It follows that the map
\[
\funct^\lambda(\autspace)\rightarrow \funct^\lambda_{\cusp}(\Siegel_P),\ \  f\mapsto(\delta(h)f)_P^{\Siegel}
\]
is the composition the operator
$\funct^\lambda(\autspace)\rightarrow\funct^\lambda_{\cusp}(\Siegel'_P)$, $f\mapsto f_P^{\Siegel'}$
with $\delta^{\Siegel'_P,\Siegel_P}(h)$.
%\Erez{reference to \cite{MR1075727}}
%\begin{lemma}(cf.\ \cite{MR0232893}*{p. 14})
Hence, it suffices to show that the restriction of the operator
\[
\funct^\lambda(\Siegel'_P)\rightarrow\funct^\lambda(\Siegel_P^{\ball}),\ \
f\mapsto(\delta^{\Siegel'_P,\Siegel_P}(h)f)\rest_{\Siegel_P^{\ball}}
\]
to $\funct^\lambda_{\cusp}(\Siegel_P')$ is a Hilbert--Schmidt operator.
By \cite{MR1075727}*{\S1.6} it suffices to show that the evaluation map
\[
\operatorname{ev}_x:\funct^\lambda_{\cusp}(\Siegel_P')\rightarrow\C,\ \
f\mapsto(\delta^{\Siegel'_P,\Siegel_P}(h)f)(x),\ \ x\in\Siegel_P^{\ball}
\]
satisfies $x\mapsto\norm{\operatorname{ev}_x}\in\funct^\lambda(\Siegel_P^{\ball})$.
In fact, we show that the function $x\mapsto\norm{\operatorname{ev}_x}$, $x\in \Siegel_P^{\ball}$ belongs
to the space $\functb^{\rpd}(\Siegel_P^{\ball})$ of functions of rapid decay, defined by the norms
\[
\sup_{\Siegel_P^{\ball}}e^{R\norm{\Ht_0(\cdot)}}\abs{f},\ \ R>0.
\]

This follows by combining two facts.
The first is that as before, since $h$ is smooth, $\delta^{\Siegel'_P,\Siegel_P}(h)$ defines an operator
\[
\funct^\lambda(\Siegel'_P)\rightarrow\functb_{\smth}^R(\Siegel_P)
\]
for $R\gg0$. The second is that for every $R>0$ we have an operator
\[
\functb_{\smth,\cusp}^R(\Siegel_P)\rightarrow
\functb^{\rpd}(\Siegel_P^{\ball}),\ \ f\mapsto f\rest_{\Siegel_P^{\ball}}
%\funct^\lambda(\Siegel_P^{\ball}),\ \ f\mapsto f\rest_{\Siegel_P^{\ball}}
\]
(cf.\ \cite{MR1361168}*{I.2.10--11}). The lemma follows.
\end{proof}

It follows from the first two parts of the lemma by a standard argument (cf.\ \cite{MR1075727}*{\S2.4}),
that we can define the space of functions of uniform moderate growth equivalently as
the union over $\lambda$ of the smooth part $\funct^\lambda_{\smth}(\autspace)$ of $\funct^\lambda(\autspace)$.
That is, for any compact open subgroup $K$ of $G(\A_f)$ we have
%\begin{equation} \label{eq: isomumd}
\[
\umd(\autspace)^K=\cup_{\lambda\in\aaa_0^*}\funct^\lambda_{\smth}(\autspace)^K
\]
%\end{equation}
with the locally convex inductive limit topology on the right-hand side.

\subsection{}

In this section we show that the operators $\mu_s$ defined in \eqref{eq: defmu} are strict embeddings.
This is a special case of a more general setup. We start with the one-dimensional case.

Let $(X,\mu)$ be a measure space.
Let $c$ be a constant. We say that a transformation $\sigma:X\rightarrow X$ is $c$-renormalizing
if the pullback $\tau$ by $\sigma$ satisfies
\[
\norm{\tau f}=c\norm{f}
\]
for all $f\in L^2(X,\mu)$. Equivalently, $\mu(\sigma^{-1}A)=c^2\mu(A)$ for every measurable subset $A\subset X$.

\begin{lemma}
Assume that $\sigma:X\rightarrow X$ is a $c$-renormalizing transformation.

Let $h:X\rightarrow\R$ be a function such that $h(\sigma x)=h(x)+1$ for all $x\in X$.

For any subset $A\subset\R$ denote $X_A=h^{-1}(A)$.

Let $\tau^+$ be the pullback by $\sigma$ on functions on $X_{\R_+}$.

Let $D$ be a monic polynomial of degree $m$ with complex coefficients.
Assume that $\abs{r}<c$ for every root $r$ of $D$. Then, the operator
\[
L^2(X_{\R_+},\mu)\rightarrow L^2(X_{\R_{+}},\mu)\oplus L^2(X_{(0,m]},\mu),\ \ f\mapsto (D(\tau^+)f,f\rest_{X_{(0,m]}})
\]
is a strict embedding. Namely, there exist explicit constants $C_1,C_2>0$ such that for any
function $f$ on $X_{\R_+}$ we have
\[
\norm{f}\le C_1\norm{D(\tau^+)f}+C_2\norm{f\one_{X_{(0,m]}}}
\]
with respect to the $L^2(X_{\R_+},\mu)$-norm.
\end{lemma}

\begin{proof}
We prove it by induction on $m$. The case $m=0$ is obvious.
For the induction step, it is enough to consider the case $m=1$ i.e., $D=x-r$.
%(Note that the roots of $D$ are bounded in terms of the size of the coefficients of $D$ since $D$ is monic.)
By assumption,
\[
\norm{\tau^+ f}=c\norm{f\cdot\one_{X_{(1,\infty)}}}\ge c(\norm{f}-\norm{f\one_{X_{(0,1]}}}).
\]
On the other hand,
\[
\norm{\tau^+ f}\le\norm{D(\tau^+)f}+\abs{r}\norm{f}.
\]
Therefore,
\[
(c-\abs{r})\norm{f}\le\norm{D(\tau^+)f}+c\norm{f\one_{X_{(0,1]}}}.
\]
The lemma follows.
\end{proof}

We have the following more general, multidimensional version.
% is proved by easy induction.
%We omit the details.
%Let $(X,\mu)$ be a measure space as before and

\begin{corollary} \label{cor: forstrict}
Let $I$ be a finite index set.
For every $i\in I$, let $c_i$ be a constant and let $\sigma_i:X\rightarrow X$ be a $c_i$-renormalizing
transformation.

Let $V$ be a finite-dimensional vector space over $\R$ with linearly independent vectors $e_i$, $i\in I$
(not necessarily a basis).

Let $h:X\rightarrow V$ be a function such that $h(\sigma_ix)=h(x)+e_i$ for every $i\in I$ and $x\in X$.

For any subset $A\subset V$ denote $X_A=h^{-1}(A)$.

For every $i\in I$ let $\xi_i$ be a linear form on $V$ such that $\xi_i(e_i)>0$ and $\xi_i(e_j)=0$ for all $j\ne i$.
Let $V_+$ be the cone
\[
V_+=\{v\in V\mid\xi_i(v)>0\text{ for all }i\}.
\]

Let $\tau_i^+$ be the pullback by $\sigma_i$ on functions on $X_{V_+}$.

For every $i\in I$ let $D_i$ be a monic polynomial of degree $m_i$ such that $\abs{r}<c_i$ for every root $r$ of $D_i$.
Let
\[
B=\{v\in V\mid0<\xi_i(v)\le m_i\xi_i(e_i)\text{ for all }i\}\subset V_+.
\]
Then, the operator
\[
L^2(X_{V_+},\mu)\rightarrow L^2(X_{V_+},\mu)^I\oplus L^2(X_B,\mu),\ \ f\mapsto ((D_i(\tau_i^+)f)_{i\in I},f\rest_{X_B})
\]
is a strict embedding.
%Moreover, in this case, if $f$ is a function on $X_{V_+}$ such that $f\rest_{X_B}\in L^2(X_B,\mu)$ and
%$D_i(\tau_i)f\in L^2(X_{V_+},\mu)$ for all $i$, then $f\in L^2(X_{V_+},\mu)$.
\end{corollary}

The proof is by easy induction on the size of $I$ using the previous lemma. We omit the details.

\begin{comment}
\begin{proof}
We argue by induction on $d$. The case $d=0$ is trivial.
For the induction step we write $V_+=C_1\cap C_2$
where $C_1=\cap_{i=1}^{d-1}\xi_i^{-1}(\R_+)$ and $C_2=\xi_d^{-1}(\R_+)$ and similarly
$B=B_1\cap B_2$ where $B_1=\cap_{i=1}^{d-1}\xi_i^{-1}((0,m_i])$ and $B_2=\xi_d^{-1}((0,m_d])$.
Applying the induction hypothesis to the space $X_{C_2}$, %the restriction of $h$, the vectors $e_1,\dots,e_{d-1}$,
%the maps $\tau_1,\dots,\tau_{d-1}$ and the linear forms $\xi_1,\dots,\xi_{d-1}$,
we have a strict embedding
\[
L^2(X_{V_+},\mu)\rightarrow L^2(X_{V_+},\mu)^d,\ \ f\mapsto (D_1(\tau_1)f,\dots,D_{d-1}(\tau_{d-1})f,f\one_{X_{B_1}}).
\]
Composing it in the last coordinate with the strict embedding
\[
L^2(X_{V_+},\mu)\rightarrow L^2(X_{V_+},\mu)^2,\ f\mapsto (D_d(\tau_d)f,f\one_{X_{B_2}})
\]
of the previous lemma (applied to $X_{C_1}$, the function $\xi_d\circ h$ and the map $\sigma_d$) and noting that $\sigma_d^{-1}(X_{B_1})=X_{B_1}$ we obtain the first part.
The second part is similar.
\end{proof}
\end{comment}

Recall that the roots of a monic polynomial are bounded by the coefficients.
For instance, by Cauchy's bound, for any root $r$ of a monic polynomial $\sum a_ix^i$ we have
\[
\abs{r}\le 1+\max\abs{a_i}.
\]

We will apply the corollary above to the spaces $\funct^\lambda(\Siegel_P)$ considered in \S\ref{sec: HS}.

Fix $P\in\stds$.
For every $\alpha\in\srts_0\setminus\srts_0^P$ let $D_{\alpha}$ be a monic polynomial in one variable of degree $m_\alpha$ with complex coefficients.
Let
\begin{equation} \label{def: spball}
\Siegel_P^{\ball}=\{g\in\Siegel_P\mid \sprod{\alpha}{\Ht_0(g)-T_0}\le m_\alpha\sprod{\alpha}{\Ht_0(a_\alpha)}
\text{ for every }\alpha\in\srts_0\setminus\srts_0^P\}
\end{equation}
and
\[
\funct^\lambda(\Siegel_P^{\ball})=L^2(\Siegel_P^{\ball},w_\lambda^{-2}\ dx).
\]

\begin{proposition} \label{prop: IP(s)}
Suppose that for every $\alpha\in\srts_0\setminus\srts_0^P$, the pairing $\sprod{\lambda}{\Ht_0(a_\alpha)}$ is large with respect to the size
of the coefficients of $D_{\alpha}$. (More precisely, $e^{\sprod{\lambda}{\Ht_0(a_\alpha)}}>\abs{r}$
for every root $r$ of $D_\alpha$.)
Then, the operator
\begin{align*}
\funct^\lambda(\Siegel_P)&\rightarrow \funct^\lambda(\Siegel_P)^{\srts_0\setminus\srts_0^P}\oplus \funct^\lambda(\Siegel_P^{\ball})\\
f&\mapsto (D_{\alpha}(T_{a_\alpha})f)_{\alpha\in\srts_0\setminus\srts_0^P},f\rest_{\Siegel_P^{\ball}}
\end{align*}
%\oplus_{\alpha\in\srts_0\setminus\srts_0^P}D_{\alpha}(T_{a_\alpha})\oplus r_P^{T_0,\ball}:
is a strict embedding.
%Moreover, let $f$ be a function on $\Siegel_P$. Assume that
%\[
%D_{\alpha}(T_{a_\alpha})f\in \funct^\lambda(\Siegel_P)
%\]
%for all $\alpha\in\srts_0\setminus\srts_0^P$ and
%$f\rest_{\Siegel_P^{\ball}}\in \funct^\lambda(\Siegel_P^{\ball})$.
%Then $f\in \funct^\lambda(\Siegel_P)$.
\end{proposition}

Indeed, we simply apply the corollary above to the relative Siegel domain $X$
which is the inverse image under $\Ht_0:\zspace_P\rightarrow\aaa_0$ of the translate by $T_0$ of the cone
\[
\{Y\in\aaa_0\mid\sprod{\alpha}Y>0\text{ for all }\alpha\in\srts_0^P\},
\]
the measure $\mu=w_\lambda^{-2}\ dx$ on $X$, the vector space $V=\aaa_0$ and the map
\[
h:X\rightarrow V,\ \ h=\Ht_0-T_0.
\]
The index set $I$ is $\srts_0\setminus\srts_0^P$.
For every $\alpha\in I$, $e_\alpha=\Ht_0(a_\alpha)$ and $\xi_\alpha$ is $\alpha$ itself.
The transformation $\sigma_\alpha:X\rightarrow X$ is left translation by $a_\alpha\in\AAA_P$ and the constant $c_\alpha$ is
$\modulus_P(a_\alpha)^{\frac12}e^{\sprod{\lambda}{\Ht_0(a_\alpha)}}$.
Note that $\tau_\alpha=\modulus_P(a_\alpha)^{\frac12}T_{a_\alpha}$.

\subsection{Proof of Theorem \ref{thm: mainfin}} \label{sec: reducemain}
%Assuming Proposition \ref{prop: finite}, let us explain how it implies Theorem \ref{thm: mainfin}.

Let $\mnfld$ be a complex manifold.
For each $\alpha\in\srts_0$ %\srtscmp$ an integer $m_{\alpha}\ge0$ and
let $D_{\alpha}(s)$, $s\in\mnfld$ be a holomorphic family of monic polynomials of degree $m_\alpha$
in one variable whose coefficients are bounded.
%For $\alpha\in\srtscmp\setminus\srts_0$ we assume that the constant term of $D_\alpha(X,s)$ is nonzero for all $s\in\mnfld$.
Let $h(s)$, $s\in\mnfld$ be a holomorphic family of smooth, compactly supported functions on $G(\A)$.

\begin{comment}
For any $P\in\stds$ let {def: spball}
\[
\Siegel_P^{\ball}=\{g\in\Siegel_P\mid \sprod{\alpha}{\Ht_0(g)-T_0}\le m_\alpha\sprod{\alpha}{\Ht_0(a_\alpha)}
\text{ for all }\alpha\in\srts_0\setminus\srts_0^P\}.
\]
\end{comment}

Consider the system $\Xi^\lambda(s)$, $s\in\mnfld$ of equations on $f\in \funct^\lambda(\autspace)$ given by
\begin{gather*}
D_{\alpha}(s)(T_{a_\alpha})(f_P^{\Siegel})=0\text{ for all }P\in\stds, \alpha\in\srts_0\setminus\srts_0^P,\\
f_P^{\Siegel}\rest_{\Siegel_P^{\ball}}=(\delta(h(s))f)_P^{\Siegel}\rest_{\Siegel_P^{\ball}}\text{ for all }P\in\stds.
\end{gather*}
Recall that $f_P^{\Siegel}$, $P\in\stds$ are the components of $f^{\Siegel}$ with respect to
the Harish-Chandra decomposition \eqref{eq: orthd} and $\Siegel_P^{\ball}$ is defined in \eqref{def: spball}.

We write $\Xi^\lambda(s)$ in the form $\mu_sf=\nu_sf$ where $\mu_s$ and $\nu_s$ are defined in \eqref{eq: munu}.

Assume from now on that for every $\alpha\in\srts_0$, $\sprod{\lambda}{\Ht_0(a_\alpha)}$ is large with respect to the
coefficients of $D_{\alpha}(s)$ for all $s\in\mnfld$.
More precisely, $e^{\sprod{\lambda}{\Ht_0(a_\alpha)}}>\abs{r}$ for every root $r$ of $D_\alpha(s)$.
Recall that the map
\[
\funct^\lambda(\autspace)\rightarrow\oplus_{P\in\stds}\funct^\lambda(\Siegel_P),\ \
f\mapsto (f_P^{\Siegel})_P
\]
is a strict embedding. Hence, by Proposition \ref{prop: IP(s)} $\mu_s$ is a strict embedding for all $s\in\mnfld$.
On the other, by Lemma \ref{lem: GGPS}, $\nu_s$ is compact for all $s\in\mnfld$.
Thus, the system $\Xi^\lambda(s)$ satisfies the conditions of the Fredholm criterion
(Corollary \ref{cor: fred2}). In particular, it is locally of finite type.

%\begin{proposition}

%\end{proposition}

We can now finish the proof of Theorem \ref{thm: mainfin}. % from Proposition \ref{prop: finite}.
Let $\Xi_{\at}(s)$ be the system of equations on $\umd(\autspace)$ defined in \S\ref{sec: mainsys}.
Since the statement is local in $\mnfld$, we may indeed assume without loss of generality that
the coefficients of $D_{\alpha}(s)$ are bounded on $\mnfld$ for all $\alpha\in\srts_0$.
We may also assume that the system contains an equation of the form
\[
\delta(h(s))f=f
\]
where $h(s)$ is as above.

We first claim that the solutions of $\Xi_{\at}(s)$ are contained in $\funct^\lambda(\autspace)$, i.e.,
that $f^{\Siegel}\in \funct^\lambda(\Siegel)$ for any solution $f\in\Sol(\Xi_{\at}(s))$.
Let $f_P^{\Siegel}$, $P\in\stds$ be the components of $f^{\Siegel}$ with respect to  the Harish-Chandra decomposition \eqref{eq: orthd1}.
Then, $D_{\alpha}(s)(T_{a_\alpha})(f_P^{\Siegel})\equiv0$ for all $\alpha\in\srts_0\setminus\srts_0^P$.
On the other hand, since $f_P^{\Siegel}$ is of uniform moderate growth and cuspidal, its restriction to
$\Siegel_P^{\ball}$ is rapidly decreasing, and in particular belongs to $\funct^\lambda(\Siegel_P^{\ball})$.
By Proposition \ref{prop: IP(s)} we infer that $f_P^{\Siegel}\in \funct^\lambda(\Siegel_P)$.
Hence, $f\in \funct^\lambda(\Siegel)$ as claimed.

%By the lemma above, there exists $r_1>0$ such that
%$\Sol(\Xi_{\at}(s))\subset\functb^{r_1}(\autspace)$ for all $s\in \mnfld$.

It follows that the solutions of $\Xi_{\at}(s)$ are contained in those of $\Xi^\lambda(s)$.

By the local finiteness of $\Xi^\lambda(s)$ we may assume that there exists a finite-dimensional
vector space $L$ and an analytic family of injective operators $\gamma_s:L\rightarrow\funct^\lambda(\autspace)$, $s\in \mnfld$
such that $\Sol(\Xi^\lambda(s))\subset\gamma_s(L)$ for all $s\in \mnfld$.
It follows that $\Sol(\Xi_{\at}(s))$ is contained in the image of
the operator $\delta(h(s))\gamma_s:L\rightarrow\umd(\autspace)$, which depends analytically on $s$.
Now, it may happen that at our given point $s_0\in\mnfld$, $\delta(h(s_0))\gamma_{s_0}$ is not injective.
However, we claim that we can modify $h(s)$ to an analytic family $v(s)$ of compactly supported smooth functions
on $G(\A)$ such that $\delta(v(s))\gamma_s$ is injective near $s_0$ and $\delta(v(s))f=f$ whenever $\delta(h(s))f=f$
(so that $\Sol(\Xi_{\at}(s))$ is contained in $\delta(v(s))\gamma_s(L)$) for all $s\in\mnfld$.
Indeed, let $u$ be a smooth, nonnegative function on $G(\A)$ with total mass $1$ that is supported near the identity.
Then, $\delta(u)$ acts approximately as the identity on the finite-dimensional space $(I-\delta(h(s_0)))\gamma_{s_0}(L)$.
Let $v(s)=h(s)+u-u*h(s)=h(s)+u*(\delta_e-h(s))$.
Then, $\delta(v(s_0))\gamma_{s_0}$ is close to $\gamma_{s_0}$ and therefore injective,
while $\delta(v(s))f=f$ whenever $\delta(h(s))f=f$.
Thus, $\Xi_{\at}(s)$ is locally of finite type, as required.

\subsection{The case $\AAA_G\ne1$} \label{sec: AGne1}

Finally, we discuss the necessary (mild) modifications necessary to prove Theorem \ref{thm: mainfin} in the case where $\AAA_G\ne1$.

The main difference is that we do not consider the weight function $w_\lambda$ on all of $\zspace$ as before.
Instead, we split $\zspace$ into $2^r$ ``orthants'' $\zspace^{\underline{\epsilon}}$, $\underline{\epsilon}\in\{\pm1\}^r$
and on each one consider a different weight function $w_{\lambda_{\underline{\epsilon}}}$.

Formally, this is done as follows.
Recall that $z_1,\dots,z_r$ are fixed elements in $\AAA_G$ such that $\Ht_0(z_1),\dots,\Ht_0(z_r)$ is a basis of $\aaa_G$.
We write $\aaa_G$ as the (almost disjoint) union of the $2^r$ orthants $\aaa_G^{\underline{\epsilon}}$,
$\underline{\epsilon}=(\epsilon_1,\dots,\epsilon_r)\in\{\pm1\}^r$, namely the cones spanned by
$\epsilon_1\Ht_0(z_1),\dots,\epsilon_r\Ht_0(z_r)$.

Let $\autspace^{\underline{\epsilon}}$ be the inverse image of $\aaa_G^{\underline{\epsilon}}$
under $\Ht_G$. Similarly, define $\zspace^{\underline{\epsilon}}$, $\Siegel^{\underline{\epsilon}}$, etc.

Let $\underline{\lambda}$ be a family of vectors $\lambda_{\underline{\epsilon}}$, $\underline{\epsilon}\in\{\pm1\}^r$
in $\aaa_0^*$.
For every $\underline{\epsilon}\in\{\pm1\}^r$ we consider the weight function $w_{\lambda_{\underline{\epsilon}}}$ on $\zspace^{\underline{\epsilon}}$ as before.
The weighted $L^2$ space
\[
\funct^{\lambda_{\underline{\epsilon}}}(\Siegel^{\underline{\epsilon}})=L^2(\Siegel^{\underline{\epsilon}},
w_{\lambda_{\underline{\epsilon}}}^{-2}\ dx)
\]
gives rise to a Hilbertian space $\funct^{\lambda_{\underline{\epsilon}}}(\autspace^{\underline{\epsilon}})$.
Let $w_{\underline{\lambda}}$ be the weight function on $\zspace$ whose restriction to $\zspace^{\underline{\epsilon}}$
is $w_{\lambda_{\underline{\epsilon}}}$ for every $\underline{\epsilon}\in\{\pm1\}^r$. Then,
\[
\funct^{\underline{\lambda}}(\Siegel):=L^2(\Siegel,w_{\underline\lambda}^{-2}\ dx)=
\oplus_{\underline{\epsilon}\in\{\pm1\}^r}
\funct^{\lambda_{\underline{\epsilon}}}(\Siegel^{\underline{\epsilon}}).
\]
Correspondingly, we write
%\begin{equation} \label{eq: decompeps}
\[
\funct^{\underline{\lambda}}(\autspace)=\oplus_{\underline{\epsilon}\in\{\pm1\}^r}
\funct^{\lambda_{\underline{\epsilon}}}(\autspace^{\underline{\epsilon}}).
\]
%\end{equation}

For any monic polynomial $D$ of degree $m$ with nonzero constant coefficient $a_0$,
denote by $D^-$ the normalized upended polynomial $D^-=a_0^{-1}x^mD(x^{-1})$, which is also a monic polynomial of degree $m$.
For consistency, set $D^+=D$.
For every $\underline{\epsilon}\in\{\pm1\}^r$ and $i=1,\dots,r$, the operator $T_{z_i}^{\epsilon_i}$, and hence
also $D^{\epsilon_i}(T_{z_i}^{\epsilon_i})$, acts on
$\funct^{\lambda_{\underline{\epsilon}}}(\autspace^{\underline{\epsilon}})$.
%The direct sum of $D^{\epsilon_i}(T_{z_i}^{\epsilon_i})$ over $\epsilon\in\{\pm1\}^r$ defines an operator
%\[
%D^\#(T_{z_i}):\funct^\lambda(\autspace)\rightarrow\funct^\lambda(\autspace).
%\]

Now, for every $\alpha\in\srts_0$ let $D_\alpha(s)$, $s\in\mnfld$ be an analytic family of monic polynomials in one variable
of degree $m_{\alpha}$ whose coefficients are bounded on $\mnfld$.
For every $i=1,\dots,r$ let $\tilde D_i(s)$, $s\in\mnfld$ be an analytic family of monic polynomials in one variable of degree $\tilde m_i$
whose coefficients, as well as the inverse of the constant coefficient, are bounded on $\mnfld$.
Let $h(s)$, $s\in\mnfld$ be an analytic family of compactly supported functions on $G(\A)$.

Let $\eta_1,\dots,\eta_r$ be the dual basis of $\Ht_0(z_1),\dots,\Ht_0(z_r)$ in $\aaa_G^*$. For every
$\underline{\epsilon}\in\{\pm1\}^r$ and $P\in\stds$ let
\begin{align*}
\Siegel_P^{\underline{\epsilon},\ball}=\{g\in\Siegel_P^{\underline{\epsilon}}\mid& \sprod{\alpha}{\Ht_0(g)-T_0}\le m_\alpha\sprod{\alpha}{\Ht_0(a_\alpha)}
\text{ for every }\alpha\in\srts_0\setminus\srts_0^P,\text{ and,}\\
&\epsilon_i\sprod{\eta_i}{\Ht_G(g)}\le\tilde m_i\text{ for every }i=1,\dots,r\}.
\end{align*}

The system of equations $\Xi^\lambda(s)$ on $f\in\funct^{\underline{\lambda}}(\autspace)$ is given by
\begin{gather*}
f_P^{\Siegel^{\underline{\epsilon}}}\rest_{\Siegel_P^{\underline{\epsilon},\ball}}=
(\delta(h(s))f)_P^{\Siegel^{\underline{\epsilon}}}\rest_{\Siegel_P^{\underline{\epsilon},\ball}}\ \ \text{ for all }P\in\stds,\\
D_\alpha(s)(T_{a_\alpha})f_P^{\Siegel^{\underline{\epsilon}}}=0\text{ for all }P\in\stds\text{ and }\alpha\in\srts_0\setminus\srts_0^P,\\
\tilde D_i(s)^{\epsilon_i}(T_{z_i}^{\epsilon_i})f^{\underline{\epsilon}}=0\text{ for all }i=1,\dots,r,
\end{gather*}
for every $\underline{\epsilon}\in\{\pm1\}^r$.
The same argument as in the case $\AAA_G=1$ shows that the system $\Xi^\lambda(s)$ is of Fredholm type
provided that the following conditions are satisfied for every $\underline{\epsilon}\in\{\pm1\}^r$.
\begin{enumerate}
\item $e^{\sprod{\lambda^{\underline{\epsilon}}}{\Ht_0(a_\alpha)}}>\abs{x}$ for every root $x$ of $D_\alpha(s)$ for every $\alpha\in\srts_0$ and $s\in\mnfld$, and,
\item $e^{\epsilon_i\sprod{\lambda^{\underline{\epsilon}}}{\Ht_0(z_i)}}>\abs{x}$ for every root $x$ of $\tilde D_i^{\epsilon_i}(s)$ for every $i=1,\dots,r$ and $s\in\mnfld$.
\end{enumerate}
As before, we conclude that $\Xi^{\at}(s)$ is locally of finite type.
%As before, the local strong finiteness of $\Xi^\lambda(s)$ immediately implies that of $\Xi^{\at}(s)$.

\section{Conclusion of proof -- the number field case} \label{sec: conclusion}

We continue to assume that $F$ is a number field.
Combining the uniqueness result of \S\ref{sec: unique} and the local finiteness result of
\S\ref{sec: LF} we deduce the main theorem using the principle of meromorphic continuation.

The system of linear equations is described in \S\ref{sec: equations} and is motivated by
Proposition \ref{prop: unique} (or more precisely, Corollary \ref{cor: uniq}).
It also incorporates a basic result of Harish-Chandra (\S\ref{sec: HC}) which is the fulcrum for the local finiteness
provided by Theorem \ref{thm: mainfin}.
The meromorphic continuation of the intertwining operators
is then deduced from that of the Eisenstein series by a standard argument -- see \S\ref{sec: MIO}.
The functional equations are also an immediate consequence of Proposition \ref{prop: unique}.
Finally, the analysis of the singularities boils down to the case of a maximal parabolic subgroup.

\subsection{Characterization of automorphic forms} \label{sec: conc1}

Let $P\in\stds$.
Denote by $\tilde\AF_P^{\cusp}$ \index{AFPcusptilde@$\tilde\AF_P^{\cusp}$, $(\tilde\AF_P^{\cusp})^\perp$} the linear
span of the functions of the form $(f\circ\Ht_P) \cdot\varphi$ where $f\in C_c^\infty(\aaa_P)$ and $\varphi\in\AF_P^{\cusp}$.
It consists of rapidly decreasing functions.
Denote by $(\tilde\AF_P^{\cusp})^\perp$ the annihilator of $\tilde\AF_P^{\cusp}$ in $\umd(\autspace_P)$ with respect
to the sesquilinear form $(\cdot,\cdot)_{\autspace_P}$ given by \eqref{eq: innerP}.

By \cite{MR1361168}*{I.3.4}, if $\phi\in\umd(\autspace)$ and $\cnst_{G,P}\phi\in(\tilde\AF_P^{\cusp})^\perp$ for all $P\in\stds$
then $\phi\equiv0$.
More generally, for any $P\in\stds$,
\begin{equation} \label{eq: cn0}
\text{if $\phi\in\umd(\autspace_P)$ and $\cnst_{P,Q}\phi\in(\tilde\AF_Q^{\cusp})^\perp$ for all $Q\subset P$
then $\phi\equiv0$.}
\end{equation}

For any $\lambda\in\chars_P$ and $a\in\AAA_P$ consider the difference operator
\index{Diff3@$\diff_a^{P,\lambda}$, $\diff_{\underline{a}}^{P,\underline{\lambda}}$, $\diff_a^{P,\underline{\lambda}}$}
\[
\diff_a^{P,\lambda}\varphi=a\cdot \varphi-a^{\lambda}\varphi
\]
on functions on $\autspace_P$.
We use the same notation also if $\lambda\in\chars_Q$ where $Q\subset P$
(in which case, it depends only on the projection of $\lambda$ to $\chars_P$).
The operators $\diff_a^{P,\lambda}$, $a\in\AAA_P$, $\lambda\in\chars_P$
pairwise commute. More generally, for $\underline{\lambda}=(\lambda_1,\dots,\lambda_n)\in\chars_P^n$
(or more generally in $\chars_Q^n$ where $Q\subset P$) and
$\underline{a}=(a_1,\dots,a_n)\in\AAA_P^n$ we write
\[
\diff_{\underline{a}}^{P,\underline{\lambda}}=\prod_{i=1}^n\diff_{a_i}^{P,\lambda_i}.
\]
(If $a_i=a$ for all $i$, then we simply write $\diff_a^{P,\underline{\lambda}}$.)

Let $\z$ be the center of $\univ(\gggg)$. If $P=M\ltimes U\in\stds$ we will write $\z^M$ for the corresponding object for $M$.

\begin{lemma} \label{lem: autcusp}
Let $\phi\in\umd(\autspace)$.
Then, $\phi\in\AF_G$ if and only if the following two conditions are satisfied.
\begin{enumerate}
\item There exists a smooth, compactly supported, bi-$\K$-finite function $h$ on $G(\A)$ such that $\delta(h)\phi=\phi$.
\item For every $P\in\stds$ there exist an integer $n\ge0$ and $\underline{\lambda}\in\chars_P^n$
such that $\diff_{\underline{a}}^{P,\underline{\lambda}}\cnst_{G,P}\phi\in(\tilde\AF_P^{\cusp})^\perp$
for all $\underline{a}\in\AAA_P^n$.
\end{enumerate}
\end{lemma}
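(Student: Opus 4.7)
Plan: The forward direction, from $\phi \in \AF_G$ to conditions (1) and (2), is the classical half and follows from the standard characterization of automorphic forms as $\K$-finite, $\z$-finite functions of uniform moderate growth. For (1), I would use that $\phi$ is smooth and $\K$-finite together with Dixmier--Malliavin (or an explicit idempotent construction via an approximate identity in $C_c^\infty(G(F_\infty))$ convolved with the characteristic function of a compact open subgroup on the finite part) to produce a bi-$\K$-finite $h \in C_c^\infty(G(\A))$ with $\delta(h)\phi = \phi$. For (2), recall that $\cnst_{G,P}\phi \in \AF_P$ and that $\AF_P$ decomposes by \eqref{eq: AFd} as a \emph{finite} direct sum of generalized eigenspaces $\AF_{P,\lambda}$ for the twisted $\AAA_P$-action; taking $\underline{\lambda}$ to enumerate the appearing $\lambda$'s with sufficient multiplicity forces $\diff_{\underline z}^{P,\underline\lambda}\cnst_{G,P}\phi = 0$ identically, which trivially lies in $(\tilde\AF_P^{\cusp})^\perp$.

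The backward direction is the substantive content. Condition (1) immediately yields that $\phi$ is smooth and right-$\K$-finite of uniform moderate growth; the task is to upgrade this to $\z$-finiteness and hence automorphy. My plan is to construct an automorphic form $\psi \in \AF_G$ whose constant terms match those of $\phi$ modulo $(\tilde\AF_Q^{\cusp})^\perp$ for every $Q \in \stds$, and then invoke \eqref{eq: cn0} applied to $\phi - \psi$ to conclude that $\phi = \psi \in \AF_G$.

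I would build $\psi$ by downward induction on the semisimple rank of standard parabolics: for each $P \in \stds$ produce $\psi_P \in \AF_P$ such that $\cnst_{G,P}\phi - \psi_P \in (\tilde\AF_P^{\cusp})^\perp$, and take $\psi = \psi_G$. The cuspidal component of $\psi_P$ is extracted from condition (2): for the prescribed $\underline{\lambda}$, the cuspidal projection $\cnst_{G,P}^{\cusp}\phi$ considered modulo the annihilator is controlled by a finite-dimensional space of cuspidal functions with twisted $\AAA_P$-exponents drawn from $\underline{\lambda}$ and $\K$-types dictated by the bi-$\K$-finite $h$ from (1). Harish-Chandra's finite-dimensionality theorem (or its parametric refinement, Theorem \ref{thm: mainfin}) places this space inside $\AF_P^{\cusp}$, supplying the cuspidal piece of $\psi_P$; the non-cuspidal piece is assembled from the inductively constructed $\psi_Q$, $Q \subsetneq P$, using the compatibility $\cnst_{P,Q}(\cnst_{G,P}\phi) = \cnst_{G,Q}\phi$ to match along each proper standard parabolic of $P$.

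The main obstacle lies precisely in this inductive extraction of $\psi_P$: one must pass from the weak assertion that $\cnst_{G,P}\phi$ is annihilated by $\diff_{\underline z}^{P,\underline\lambda}$ only \emph{modulo} $(\tilde\AF_P^{\cusp})^\perp$ to an honest element of $\AF_P$ realizing the same shadow in the cuspidal direction. This step rests decisively on combining the right-$\K$-finiteness coming from (1), the exponent control coming from (2), and Harish-Chandra's finiteness theorem; it is precisely the juncture where the technical strengthening afforded by Theorem \ref{thm: mainfin} is brought to bear.
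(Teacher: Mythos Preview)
Your forward direction is fine and matches the paper's (which simply cites standard facts from \cite{MR1361168}*{I.2.17 and I.3.1}).

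For the backward direction, your approach diverges from the paper's and contains a genuine gap. You propose to construct an automorphic form $\psi \in \AF_G$ with the same cuspidal shadows as $\phi$ and then invoke \eqref{eq: cn0} on $\phi-\psi$. But the inductive step---assembling $\psi_P \in \AF_P$ from a cuspidal piece together with the $\psi_Q$ for $Q \subsetneq P$---is not feasible as stated: there is no natural map $\AF_Q \to \AF_P$ realizing a prescribed constant term, and the image of $\totcusp_G$ inside $\oplus_P \AF_P^{\cusp}$ is not characterized in any usable way here. You correctly flag this as ``the main obstacle'', but you do not overcome it; invoking Harish-Chandra finiteness or Theorem~\ref{thm: mainfin} does not by itself produce the required $\psi_P$. (A separate issue: the cuspidal projection $\cnst_{G,P}^{\cusp}\phi$ you invoke is only defined once $\cnst_{G,P}\phi$ is already known to lie in $\AF_P$.)

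The paper's argument sidesteps the construction of $\psi$ entirely. The idea you are missing is to combine condition (2) across \emph{all} $Q \subset P$ simultaneously. Set $\phi' = \big(\prod_{Q \subset P}\diff_{\underline{z}_Q}^{P,\underline{\lambda}_Q}\big)\cnst_{G,P}\phi$. For each $Q' \subset P$ the difference operators commute with $\cnst_{P,Q'}$, and among the factors is $\diff_{\underline{z}_{Q'}}^{Q',\underline{\lambda}_{Q'}}$, so condition (2) applied at $Q'$ gives $\cnst_{P,Q'}\phi' \in (\tilde\AF_{Q'}^{\cusp})^\perp$; the remaining factors preserve this annihilator. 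Now \eqref{eq: cn0} yields $\phi' \equiv 0$ identically---not merely modulo a cuspidal annihilator. Hence the space
\[
V = \{\phi \in \umd(\autspace_G) : \delta(h)\phi = \phi \text{ and } \diff_{\underline{z}}^{P,\underline{\lambda}_P}\cnst_{G,P}\phi \in (\tilde\AF_P^{\cusp})^\perp \ \forall P\in\stds,\ \underline{z}\in\AAA_P^{n_P}\}
\]
is contained in the solution set of a system of the type $\Xi_{\at}$ of \S\ref{sec: mainsys}, and is therefore finite-dimensional by Theorem~\ref{thm: mainfin}. Since $V$ is visibly $\z$-invariant, every $\phi\in V$ is $\z$-finite; $\K$-finiteness comes from the bi-$\K$-finiteness of $h$. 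Thus $\phi \in \AF_G$ directly, with no auxiliary $\psi$ needed.
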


\begin{proof}
The ``only if'' direction follows from \cite{MR1361168}*{I.2.17 and I.3.1}.

Conversely, let $h$ be a smooth, compactly supported, bi-$\K$-finite function on $G(\A)$
and for every $P\in\stds$ let $n_P\ge0$ be an integer and $\underline{\lambda}_P\in\chars_P^{n_P}$.
Consider the linear space
\[
V=\{\phi\in\umd(\autspace)\mid\delta(h)\phi=\phi\text{ and }\diff_{\underline{a}}^{P,\underline{\lambda}_P}\cnst_{G,P}\phi
\in(\tilde\AF_P^{\cusp})^\perp\ \forall P\in\stds, \forall\underline{a}\in\AAA_P^{n_P}\}.
\]
By \eqref{eq: cn0}, for any $\phi\in V$ we have
\[
\big(\prod_{Q\subset P}\diff_{\underline{a}_Q}^{P,\underline{\lambda}_Q}\big)\cnst_{G,P}\phi\equiv0
\]
for any $P\in\stds$ and any collection $\underline{a}_Q\in\AAA_Q^{n_Q}$, $Q\subset P$. (Cf.\ \cite{MR1361168}*{I.3.5}.)
Hence $V$ is finite-dimensional by Theorem \ref{thm: mainfin}. Clearly, $V$ is $\z$-invariant.
Therefore, any $\phi\in V$ is $\z$-finite, and of course also $\K$-finite (since $h$ is bi-$\K$-finite).
Thus, $\phi$ is an automorphic form.
\end{proof}

\subsection{}

The following is a standard consequence of Harish-Chandra's finiteness theorem.
\begin{lemma} \label{lem: finio}
For any $\varphi\in\AF_P$ and $w\in\Weyl(P,Q)$ the automorphic forms $M(w,\lambda)\varphi$,
whenever defined, belong to a finite-dimensional linear subspace of $\AF_Q$ (independently of $\lambda$).
\end{lemma}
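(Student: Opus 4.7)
The plan is to exhibit a single finite-dimensional subspace $W\subset\AF_Q$ that contains $M(w,\lambda)\varphi$ for every $\lambda$ in the region of absolute convergence of the defining integral, and then to invoke Harish-Chandra's finiteness theorem to conclude that $W$ is finite-dimensional. The space $W$ will be cut out by three uniform conditions: (i) $\K_\infty$-types in a finite set $S$; (ii) invariance under some fixed open compact subgroup $K_f\subset G(\A_f)$; (iii) annihilation of the restriction to $M_Q(\A)$ by a single finite-codimension ideal $J_Q\subset\z^{M_Q}$.

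First I would verify (i) and (ii) directly. Since $\varphi$ is $\K$-finite with types in some finite set $S$ and is $K_f$-invariant for some open compact $K_f$, and since $\m_P$ is right $\K$- and $K_f$-invariant, the defining integral shows that $M(w,\lambda)\varphi$ has $\K_\infty$-types in $S$ and is $K_f$-invariant, uniformly in $\lambda$.

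The heart of the argument is (iii). Applying \eqref{eq: monm} at $g=e$ and writing $q=wmw^{-1}\in M_Q(\A)$, $m=w^{-1}qw\in M_P(\A)$, one obtains
\[
M(w,\lambda)\varphi(q)=\modulus_Q(q)^{\frac12}\modulus_P(w^{-1}qw)^{-\frac12}\int \varphi\bigl((w^{-1}qw)\cdot w^{-1}u\bigr)\,\m_P(w^{-1}u)^\lambda\,du,
\]
so the only $\lambda$-dependence lies in the factor $\m_P(w^{-1}u)^\lambda$, which is independent of $q$. Differentiating in the $M_Q$-direction, the left-translation action of $M_Q(\A)$ on $(M(w,\lambda)\varphi)|_{M_Q(\A)}$ therefore translates, via the algebra isomorphism $\Ad(w)\colon\z^{M_P}\xrightarrow{\sim}\z^{M_Q}$ (available since $wM_Pw^{-1}=M_Q$) together with a twist by the modular characters, into the left $M_P$-action on $\varphi|_{M_P(\A)}$. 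Since $\varphi$ is an automorphic form, this latter restriction is annihilated by a finite-codimension ideal $J\subset\z^{M_P}$, and we obtain a finite-codimension ideal $J_Q\subset\z^{M_Q}$, depending on $\varphi$ and $w$ but not on $\lambda$, that annihilates $(M(w,\lambda)\varphi)|_{M_Q(\A)}$ for every $\lambda$ in the region of convergence.

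Harish-Chandra's finiteness theorem applied on $M_Q$ now produces a finite-dimensional space $V_{M_Q}$ of automorphic forms on $\autspace_{M_Q}$ with $\K\cap M_Q(\A)$-types among the restrictions of those in $S$, invariance under $K_f\cap M_Q(\A_f)$, and annihilation by $J_Q$. Via the Iwasawa decomposition $G(\A)=M_Q(\A)U_Q(\A)\K$, and because the $\K_\infty$-type constraint in (i) reduces evaluation on $\K$ to finitely many cosets, any $\phi\in\AF_Q$ subject to (i)--(iii) is determined by a finite tuple of elements of $V_{M_Q}$. Hence $W$ is finite-dimensional and contains every $M(w,\lambda)\varphi$. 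The main technical obstacle is the careful transport of the infinitesimal data through $\Ad(w)$ together with the modular character twist, which is what secures the $\lambda$-independence of the ideal $J_Q$.
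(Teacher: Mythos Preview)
Your approach is essentially the paper's: fix the $\K$-type uniformly, use \eqref{eq: monm} to transport the $\z^{M_P}$-annihilator of the normalized left $M_P$-translates of $\varphi$ through $\Ad(w)$ to a $\z^{M_Q}$-annihilator $J_Q$ independent of $\lambda$, and finish with Harish-Chandra's finiteness theorem. The only cosmetic difference is that the paper then passes from $J_Q\subset\z^{M_Q}$ to a finite-codimension ideal of $\z$ (via \cite{MR1361168}*{I.2.17}) and applies Harish-Chandra directly on $G$, whereas you apply it on $M_Q$ and reassemble via Iwasawa.

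There is, however, a genuine gap. You apply \eqref{eq: monm} only at $g=e$, so condition (iii) constrains only the single function $q\mapsto(M(w,\lambda)\varphi)(q)$ on $M_Q(\A)$. This is not enough to make $W$ finite-dimensional: via Iwasawa and $\K$-finiteness, a $\phi\in\AF_Q$ with types in $S$ is determined by the finitely many functions $m\mapsto\phi(mk)$ as $k$ runs over a fixed finite subset of $\K$, and (iii) says nothing about these for $k\ne e$. For instance, with two distinct $\K$-types one can arrange $\phi\rest_{M_Q(\A)}$ to lie in a fixed one-dimensional $J_Q$-eigenspace while the individual isotypic components vary over an infinite-dimensional family, all within $\AF_Q$. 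The fix is precisely what the paper does: apply \eqref{eq: monm} for \emph{all} $g$, obtaining that for every $g$ the function $q\mapsto\modulus_Q(q)^{-\frac12}(M(w,\lambda)\varphi)(qg)$ is annihilated by $J_Q$. With this strengthened (iii), each $m\mapsto\phi(mk)$ lies in your $V_{M_Q}$ and your Iwasawa argument goes through.
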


\begin{proof}
Let $\idem$ be an idempotent in the algebra of finite functions on $\K$ such that $\delta(\idem)\varphi=\varphi$.
Then, $\delta(\idem)(M(w,\lambda)\varphi)=M(w,\lambda)\varphi$ whenever $M(w,\lambda)\varphi$ is defined.
Let $I$ be a finite-codimensional ideal of $\z^M$ that annihilates the function
$m\in M(\A)\mapsto\modulus_P(m)^{-\frac12}\varphi(mg)$ for all $g\in G(\A)$ \cite{MR1361168}*{I.2.17}.
By \eqref{eq: monm}, the finite-codimensional ideal $wI$ of $\z^{M_Q}$ annihilates the function
$m\in M_Q(\A)\mapsto\modulus_Q(m)^{-\frac12}(M(w,\lambda)\varphi)(mg)$ for all $g\in G(\A)$.
It follows that there exists a finite-codimensional ideal of $\z$ (independent of $\lambda$) that annihilates
$M(w,\lambda)\varphi$ [loc.\ cit.]. We conclude the lemma by Harish-Chandra's finiteness theorem \cite{MR0232893}*{Theorem 1}.
\end{proof}

\subsection{A result of Harish-Chandra} \label{sec: HC}

By a basic result of Harish-Chandra, every automorphic form on $\autspace_P$ is an eigenfunction of $\delta(h)$
with eigenvalue $1$ for some $h\in C_c^\infty(G(\A))$ \cite{MR0219666}*{\S8}.\footnote{As noted there,
the argument was simplified by Jacquet and Borel.}
We need a uniform version of this fact, for families of automorphic forms, as follows.

\begin{lemma} (See \cite{MR1361168}*{I.4.5}) \label{lem: HC}
For every $P\in\stds$ let $n_P\ge0$ be an integer and $V_P$ a finite-dimensional subspace of $\AF_P^{\cusp}$.
Let $\Lambda=\oplus_{P\in\stds}\chars_P^{n_P}$.
Then, there exists an integer $d\ge0$ and for any $\underline{\lambda}_0\in\Lambda$ there exist
analytic functions $b_0,\dots,b_d:\Lambda\rightarrow\C$ such that $b_0(\underline{\lambda}_0)\ne0$, and
a bi-$\K$-finite function $h\in C_c^\infty(G(\A))$, with the following property.
Suppose that $\phi\in\AF_G$ and $\underline{\lambda}=(\lambda_1^P,\dots,\lambda_{n_P}^P)_{P\in\stds}\in\Lambda$
are such that for every $P\in\stds$, $\cnst_{G,P}^{\cusp}\phi$ is
of the form $\cnst_{G,P}^{\cusp}\phi=\sum_{i=1}^{n_P}(\varphi_i)_{\lambda_i^P}$ for some $\varphi_1,\dots,\varphi_{n_P}\in V_P$.
Then,
\[
\sum_{i=1}^db_i(\underline{\lambda})\delta(h)^i\phi=b_0(\underline{\lambda})\phi.
\]
\end{lemma}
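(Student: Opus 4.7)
The plan is to realize the identity as a Cayley--Hamilton relation for the operator $\delta(h)$ acting on a family $\{W_{\underline{\lambda}}\}_{\underline{\lambda} \in \Lambda}$ of finite-dimensional spaces of automorphic forms, arranged so that the matrix entries of $\delta(h)|_{W_{\underline{\lambda}}}$ depend holomorphically on $\underline{\lambda}$.

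First, I would enlarge each $V_P$ to a $\K$- and $\z^M$-stable finite-dimensional subspace $V_P' \subset \AF_P^{\cusp}$, possible by Harish-Chandra's finiteness theorem \cite{MR0232893}*{Theorem 1}. Fix an idempotent $\idem$ projecting onto every $\K$-type occurring in any $V_P'$. Via the twist rule \eqref{eq: monm} together with the Harish-Chandra homomorphism $\z \to \z^M$ (as in the proof of Lemma \ref{lem: finio}), the $\z$-infinitesimal character of the induced function $\varphi_\lambda$, for $\varphi \in V_P'$, depends polynomially on $\lambda$; this yields a polynomial family $J_{\underline{\lambda}} \subset \z$ of finite-codimensional ideals annihilating any $\phi$ satisfying the hypothesis of the lemma. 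Define
\[
W_{\underline{\lambda}} = \{\phi \in \AF_G : \delta(\idem)\phi = \phi,\ \delta(J_{\underline{\lambda}})\phi = 0\}.
\]
Harish-Chandra's theorem provides a uniform bound $\dim W_{\underline{\lambda}} \le N$, and every $\phi$ satisfying the hypothesis lies in $W_{\underline{\lambda}}$ (after possibly enlarging $\idem$).

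Second, pick a bi-$\K$-finite $h \in C_c^\infty(G(\A))$ with $\idem * h * \idem = h$; by centrality of $\z$, the operator $\delta(h)$ preserves $W_{\underline{\lambda}}$ for every $\underline{\lambda}$. Construct a holomorphic family of linear maps $\sigma_{\underline{\lambda}} : \C^N \to \AF_G$ whose image contains $W_{\underline{\lambda}}$, using the analytic extrapolation of Corollary \ref{cor: liftPE} applied to the twisted cuspidal spaces $\{\varphi_\lambda : \varphi \in V_P'\}$. Relative to this frame, $\delta(h)|_{W_{\underline{\lambda}}}$ is represented by an $N \times N$ matrix with entries holomorphic on $\Lambda$, and its characteristic polynomial
\[
p_{\underline{\lambda}}(X) = X^N + \sum_{i=0}^{N-1} c_i(\underline{\lambda}) X^i
\]
has analytic coefficients. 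Cayley--Hamilton then gives $p_{\underline{\lambda}}(\delta(h))\phi = 0$ for every $\phi \in W_{\underline{\lambda}}$. Finally, choose $h$ via a Paley--Wiener argument so that $\delta(h)|_{W_{\underline{\lambda}_0}}$ is invertible, equivalently $c_0(\underline{\lambda}_0) \ne 0$; this is possible because the finitely many infinitesimal characters arising at $\underline{\lambda}_0$ can be separated by a suitable bi-$\K$-finite compactly supported $h$ acting as a nonzero scalar on each. Rearranging yields the identity with $a_0(\underline{\lambda}) = (-1)^N c_0(\underline{\lambda})$, $a_N(\underline{\lambda}) = (-1)^{N+1}$, and $a_i(\underline{\lambda}) = (-1)^{N+1-i} c_i(\underline{\lambda})$ for $1 \le i < N$.

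The main obstacle is producing the holomorphic frame for $\{W_{\underline{\lambda}}\}$: the family may change dimension as coordinates of $\underline{\lambda}$ coincide, since $\PE_{\AAA_P}(\underline{\lambda})$ is not a flat family, so a desingularization through the analytic section of Corollary \ref{cor: liftPE} is required. Transferring this holomorphic structure from the auxiliary polynomial-exponential spaces back to the automorphic-form spaces $W_{\underline{\lambda}}$, and verifying global rather than merely local analyticity of the resulting $a_i$ on all of $\Lambda$, constitutes the core technical work.
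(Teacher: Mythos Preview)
The paper does not prove this lemma; it simply cites \cite{MR1361168}*{I.4.5} (the argument goes back to Harish-Chandra \cite{MR0219666}*{\S8}, simplified by Jacquet and Borel). Your Cayley--Hamilton strategy is the right idea, and your identification of the finite-codimensional ideal $J_{\underline{\lambda}}\subset\z$ depending polynomially on $\underline{\lambda}$ is correct. The gap is the construction of a holomorphic frame for the \emph{global} space $W_{\underline{\lambda}}\subset\AF_G$. Corollary~\ref{cor: liftPE} produces sections into weighted $L^2$-spaces of polynomial exponentials on a Euclidean space; it does not produce automorphic forms on $\autspace_G$, and there is no evident way to promote such sections to elements of $\AF_G$ short of an Eisenstein map, which is exactly what the paper is trying to continue. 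Your last paragraph concedes that this transfer is ``the core technical work,'' but along the lines you propose it is not merely technical --- it is circular.

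The standard argument never frames $W_{\underline{\lambda}}$. Instead one works purely at the archimedean place: the action of $\delta(h)$ on a $\K$-finite vector annihilated by $J_{\underline{\lambda}}$ is already determined by the local $(\gggg,\K)$-module it generates. Since $\univ(\gggg)$ is a finite module over the product of $\z$ and the enveloping algebra of the Lie algebra of the maximal compact (Harish-Chandra), there is a fixed integer $d$ such that $\delta(h)$ satisfies a monic polynomial of degree $d$ with coefficients in $\z/J_{\underline{\lambda}}$; via the Harish-Chandra isomorphism these coefficients are polynomial, hence analytic, in $\underline{\lambda}$. Concretely, one is taking the characteristic polynomial of $\delta(h)$ on a finite-dimensional space of joint eigenfunctions on $G(F_\infty)$ (matrix coefficients of principal series), for which a holomorphic basis in the inducing parameter is immediate from the integral formulas --- no global input is required. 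Choosing $h$ to approximate a Dirac mass at the identity then makes $\delta(h)$ close to the identity on the fiber at $\underline{\lambda}_0$, forcing $a_0(\underline{\lambda}_0)\ne0$.
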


\begin{remark}
Let $C$ be a compact, bi-$\K$-invariant neighborhood of $1$ in $G(\A)$.
Then, in the lemma above we may choose $h$ to be supported in $C$
(and $d$ is independent of $C$).
\end{remark}

\subsection{The system of linear equations} \label{sec: equations}

Fix $P\in\stds$ and $\varphi\in\AF_P$.
We now describe a holomorphic system $\Xi_{\fnl}(\lambda)$, \index{Xifnlambda@$\Xi_{\fnl}(\lambda)$}
$\lambda\in\chars_P$ of linear equations on $\psi\in\umd(\autspace)$ that is locally of finite type, and
that admits $E(\varphi,\lambda)$ as the unique solution for $\sprod{\Re\lambda}{\alpha^\vee}\gg0$ $\forall\alpha\in\srts_P$.

For any $Q\subset P$ let $m_Q\ge0$ be an integer and $\underline{\mu}_Q\in\chars_Q^{m_Q}$
be such that $\cnst_{P,Q}^{\cusp}\varphi\in\AF_Q^{\cusp}(\underline{\mu}_Q)$.

For any $Q\in\stds$ let
\[
n_Q=\sum_{w\in\Weyl^{\supset Q}(P)}m_{P_w},\ Q\in\stds,\\
\]
and
\[
\underline{\lambda}=(\vee_{w\in\Weyl^{\supset Q}(P)}w(\underline{\mu}_{P_w}+\lambda))_{Q\in\stds}
\in\oplus_{Q\in\stds}\chars_Q^{n_Q}.
\]
Using Lemma \ref{lem: finio} and the formula \eqref{eq: consterm} we may apply Lemma \ref{lem: HC}
to conclude that there exists a family of pairs $(h_i,c_i)$, $i\in I$ consisting of
a holomorphic family $h_i(\lambda)$, $\lambda\in\chars_P$ of smooth, compactly supported bi-$\K$-finite functions on $G(\A)$
and a holomorphic function $c_i:\chars_P\rightarrow\C$, such that
\begin{enumerate}
\item $\delta(h_i(\lambda))E(\varphi,\lambda)=c_i(\lambda)E(\varphi,\lambda)$ for all $i\in I$
provided that $\sprod{\Re\lambda}{\alpha^\vee}\gg0$ for all $\alpha\in\srts_P$.
\item For any $\lambda_0\in\chars_P$ there exists $i\in I$ such that $c_i(\lambda_0)\ne0$.
\end{enumerate}

\begin{subequations}
The system $\Xi_{\fnl}(\lambda)$ consists of the homogeneous set of equations
\begin{equation} \label{eq: HCeis}
\delta(h_i(\lambda))\psi=c_i(\lambda)\psi,\ \ i\in I
\end{equation}
and the non-homogeneous set of equations
\begin{equation} \label{eq: nonhom}
\prod_{w\in\Weyl^{\supset Q}(P)\setminus\{e\}}\diff^{Q,w(\underline{\mu}_{P_w}+\lambda)}_{\underline{a}_w}
(\cnst_{G,Q}\psi-\cnst_{P,Q}\varphi_\lambda)\in(\tilde\AF_Q^{\cusp})^\perp
\end{equation}
for any $Q\in\stds$ and any collection $\underline{a}_w\in\AAA_Q^{m_{P_w}}$, $w\in\Weyl^{\supset Q}(P)\setminus\{e\}$.
\end{subequations}

\begin{proposition} \label{prop: Ximain}
The system $\Xi_{\fnl}(\lambda)$ is holomorphic and locally of finite type.
In the region $\sprod{\Re\lambda}{\alpha^\vee}\gg0$ $\forall\alpha\in\srts_P$, it admits
$\psi=E(\varphi,\lambda)$ as its unique solution.
\end{proposition}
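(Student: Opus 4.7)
The plan is to address the four assertions---holomorphicity of the system, membership of $E(\varphi,\lambda)$ in its solution set in the convergent region, uniqueness there, and local finiteness throughout $\chars_P$---in three groups. Holomorphicity and the fact that $E(\varphi,\lambda)$ is a solution in the indicated region are direct inspections; uniqueness follows from Proposition \ref{prop: unique} via Theorem \ref{thm: lead}; local finiteness is obtained by reducing $\Xi_{\fnl}(\lambda)$ to a system of the shape $\Xi_{\at}$ of \S\ref{sec: mainsys} and invoking Theorem \ref{thm: mainfin}.

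Holomorphicity of \eqref{eq: HCeis} is built into the choice of $(h_i(\lambda), c_i(\lambda))$ as an analytic family (cf.~Example \ref{ex: ccinfty}). For \eqref{eq: nonhom}, testing against any $\phi' \in \tilde\AF_Q^\cusp$ yields a scalar linear equation on $\psi$ whose coefficients are analytic in $\lambda$, since $\diff^{Q,\mu}_z$ depends analytically on $\mu$ and $\cnst_{P,Q}\varphi_\lambda$ is analytic in $\lambda$. In the region $\sprod{\Re\lambda}{\alpha^\vee}\gg 0$, $E(\varphi,\lambda)$ satisfies \eqref{eq: HCeis} by the construction of $(h_i,c_i)$, and it satisfies \eqref{eq: nonhom} by Corollary \ref{cor: uniq}: the expression $\cnst_{G,Q}^\cusp E(\varphi,\lambda) - \cnst_{P,Q}^\cusp\varphi_\lambda$ is a polynomial exponential on $\AAA_Q$ whose exponents appear in $\bigvee_{w \ne e}w(\underline\mu_{P_w}+\lambda)$ by \eqref{eq: cuspe}, hence is annihilated by the corresponding product of difference operators, which is equivalent to the perpendicularity asserted in \eqref{eq: nonhom}.

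For local finiteness, let $\psi \in \Sol(\Xi_{\fnl}(\lambda))$ and fix $Q \in \stds$. Since $\cnst_{P,Q}\varphi_\lambda$ is a polynomial exponential on $\AAA_Q$ with a holomorphically varying finite list of exponents $\underline\nu_Q(\lambda)$ (from the remark following \eqref{eq: defpe}), pre-multiplying \eqref{eq: nonhom} by $\diff^{Q,\underline\nu_Q(\lambda)}_{\underline z'}$ annihilates $\cnst_{P,Q}\varphi_\lambda$; as $\diff$ preserves the $\AAA_Q$-invariant subspace $(\tilde\AF_Q^\cusp)^\perp$, this yields
\[
\diff^{Q,\underline\lambda_Q(\lambda)}_{\underline z}\cnst_{G,Q}\psi \in (\tilde\AF_Q^\cusp)^\perp \quad \forall\underline z,
\]
where $\underline\lambda_Q(\lambda) := \underline\nu_Q(\lambda) \vee \bigvee_{w \in \Weyl(P;Q) \setminus \{e\}} w(\underline\mu_{P_w}+\lambda)$. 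Running the argument in the proof of Lemma \ref{lem: autcusp}, which pivots on \eqref{eq: cn0}, we upgrade this to $\prod_{Q \subset P'}\diff^{P',\underline\lambda_Q(\lambda)}_{\underline z_Q}\cnst_{G,P'}\psi \equiv 0$ for every $P'$, so that $\cnst_{G,P'}\psi$ is a polynomial exponential on $\AAA_{P'}$ whose exponents lie in the holomorphic family $\underline\Lambda_{P'}(\lambda) := \bigvee_{Q \subset P'}\underline\lambda_Q(\lambda)$. Combined with \eqref{eq: HCeis}, this realises $\Sol(\Xi_{\fnl}(\lambda))$ inside the solution set of a system of the form $\Xi_{\at}(\lambda)$, and Theorem \ref{thm: mainfin} delivers the local finiteness; as a byproduct, Lemma \ref{lem: autcusp} gives $\psi \in \AF_G$.

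For uniqueness in the convergent region, $\psi - E(\varphi,\lambda)$ is then an automorphic form whose cuspidal projections satisfy $\cnst_{G,Q}^\cusp(\psi - E(\varphi,\lambda)) \in \AF_Q^\cusp(\bigvee_{w \ne e}w(\underline\mu_{P_w}+\lambda))$ for each $Q$, by subtracting the two instances of \eqref{eq: cuspe}. By Lemma \ref{lem: far}, for $\sprod{\Re\lambda}{\alpha^\vee}$ sufficiently large and $w \ne e$ the unbounded contribution $w\Re\lambda$ has a very negative pairing with some simple coroot in $\srts_Q$, dominating the bounded contributions from $\underline\mu_{P_w}$ and $\rho_Q$; hence none of these exponents is leading. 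Therefore $\Exp^{\cusp,\un,\dom}(\psi - E(\varphi,\lambda)) = \emptyset$, and Theorem \ref{thm: lead} forces $\psi = E(\varphi,\lambda)$. The main obstacle is the local-finiteness step: converting the cuspidal-perpendicular conditions \eqref{eq: nonhom} into genuine polynomial-exponential conditions on the full constant terms $\cnst_{G,P'}\psi$ (not merely on their cuspidal projections), so that the framework of Theorem \ref{thm: mainfin} becomes applicable.
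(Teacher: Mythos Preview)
Your proof is correct and follows essentially the same route as the paper's: verify \eqref{eq: HCeis} and \eqref{eq: nonhom} for $E(\varphi,\lambda)$ via the construction of $(h_i,c_i)$ and Corollary \ref{cor: uniq}; homogenize \eqref{eq: nonhom} to obtain conditions of the form \eqref{eq: allw}, feed these through the argument of Lemma \ref{lem: autcusp} (via \eqref{eq: cn0}) to get genuine polynomial-exponential constraints on all $\cnst_{G,P'}\psi$, and invoke Theorem \ref{thm: mainfin} for local finiteness (and Lemma \ref{lem: autcusp} for $\psi\in\AF_G$); then deduce uniqueness from \eqref{eq: Eunique}, i.e., Lemma \ref{lem: far} plus Theorem \ref{thm: lead}. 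The only cosmetic difference is that to homogenize you append the full constant-term exponents $\underline{\nu}_Q(\lambda)$ of $\cnst_{P,Q}\varphi_\lambda$, whereas the paper more economically appends just the $w=e$ factor $\diff^{Q,\underline{\mu}_Q+\lambda}$ (which already kills the cuspidal part of $\cnst_{P,Q}\varphi_\lambda$, hence sends it into $(\tilde\AF_Q^{\cusp})^\perp$); both choices work and lead to the same reduction to $\Xi_{\at}$.
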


\begin{proof}
The system $\Xi_{\fnl}(\lambda)$ is clearly holomorphic.
In the region $\sprod{\Re\lambda}{\alpha^\vee}\gg0$ $\forall\alpha\in\srts_P$, the Eisenstein series
$\psi=E(\varphi,\lambda)$ satisfies \eqref{eq: HCeis} by the choice of $h_i(\lambda)$ and $c_i(\lambda)$,
while the equations \eqref{eq: nonhom} follow from Corollary \ref{cor: uniq}.

Conversely, suppose that $\psi\in\Sol(\Xi_{\fnl}(\lambda))$ for some $\lambda\in\chars_P$.
The equations \eqref{eq: nonhom} imply that for every $Q\in\stds$ we have
\begin{equation} \label{eq: allw}
\prod_{w\in\Weyl^{\supset Q}(P)}
\diff^{Q,w(\underline{\mu}_{P_w}+\lambda)}_{\underline{a}_w}(\cnst_{G,Q}\psi)\in(\tilde\AF_Q^{\cusp})^\perp
\end{equation}
for any collection $\underline{a}_w\in\AAA_Q^{m_{P_w}}$, $w\in\Weyl^{\supset Q}(P)$.
Let $i\in I$ be such that $c_i(\lambda)\ne0$.
Lemma \ref{lem: autcusp} and equations \eqref{eq: HCeis} and \eqref{eq: allw} imply that $\psi$ is an automorphic form.
Hence, the equation \eqref{eq: nonhom} is now equivalent to \eqref{eq: cuspe}.
It follows from \eqref{eq: Eunique} that $\psi=E(\varphi,\lambda)$ provided that $\sprod{\Re\lambda}{\alpha^\vee}\gg0$
for all $\alpha\in\srts_P$.

Finally, using \eqref{eq: allw} and the argument in the proof of Lemma \ref{lem: autcusp}, the system $\Xi_{\fnl}(\lambda)$
implies a system of the form considered in \S\ref{sec: mainsys}.
Hence, it is locally of finite type by Theorem \ref{thm: mainfin}.
\end{proof}

We can therefore invoke the principle of meromorphic continuation (Theorem \ref{thm: anal}) to the system $\Xi_{\fnl}(\lambda)$
to conclude the meromorphic continuation of $E(\varphi,\lambda)$, i.e., the first part of Theorem \ref{thm: main}.

\begin{comment}
\begin{remark}
We have not proved that the system $\Xi_{\fnl}(\lambda)$ is locally, strongly of finite type
(although we expect it to hold -- See Remark \ref{rem: stlcf}).
Nevertheless, we can still conclude that the Eisenstein series is holomorphic on the set of uniqueness of
$\Xi_{\fnl}(\lambda)$, which is open.
Indeed, as in \S\ref{sec: auxsys} and \S\ref{sec: reducemain} locally,
we can  represent $\Xi_{\fnl}(\lambda)$ as a system $\Xi'(\lambda)$ on the space $\funct^\lambda(\Siegel)$ which is
strongly of finite type. The sets of uniqueness of $\Xi_{\fnl}(\lambda)$ and $\Xi'(\lambda)$ coincide
and there is a holomorphic family of operators $\funct^\lambda(\Siegel)\rightarrow\umd(\autspace)$
that maps $\Sol(\Xi'(\lambda))$ to $\Sol(\Xi_{\fnl}(\lambda))$ for $\lambda$ in the uniqueness set.
\end{remark}
\end{comment}

\subsection{Meromorphic continuation of intertwining operators} \label{sec: MIO}

Next, we deduce the meromorphic continuation of the intertwining operators from that of the Eisenstein series.
Fix $P\in\stds$ and $w\in\Weyl(P,Q)$.
\begin{lemma}
Suppose that $a\in\AAA_Q$ is regular, i.e., $\sprod{\beta}{\Ht_Q(a)}\ne0$
for all $\beta\in\Phi_Q$. Then,
$w'^{-1}\Ht_Q(a)\notin w^{-1}\Ht_Q(a)+\aaa_0^P$ for any $w'\in\,_Q\Weyl_P\setminus\{w\}$.
\end{lemma}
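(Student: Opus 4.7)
The plan is to argue by contrapositive: assuming $w'^{-1}X\in w^{-1}X+\aaa_0^P$ with $X=\Ht_Q(a)$ and $w'\in\,_Q\Weyl_P$, I would deduce $w'=w$. First, I transport the hypothesis to the $Q$-side. Since $w\in\Weyl(P,Q)$ implies $wM_Pw^{-1}=M_Q$, and since $w$ normalizes $S_0$, we have $w(S_0\cap M_P^{\der})w^{-1}=S_0\cap M_Q^{\der}$, so the action of $w$ on $\aaa_0$ carries $\aaa_0^P$ onto $\aaa_0^Q$. Setting $\tau=ww'^{-1}\in\Weyl$, the hypothesis becomes $\tau X-X\in\aaa_0^Q$, and it will suffice to show $\tau\in\Weyl^Q$. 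Indeed, $w\Weyl^Pw^{-1}=\Weyl^Q$, so $w'=\tau^{-1}w\in\Weyl^Q w\subset\Weyl^Q w\Weyl^P$, placing $w$ and $w'$ in a common double coset of $\Weyl^Q\bs\Weyl/\Weyl^P$; since $\,_Q\Weyl_P$ provides unique representatives, this forces $w=w'$.

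The crux is the observation that any $\Weyl$-invariant inner product $(\cdot,\cdot)$ on $\aaa_0$ makes the decomposition $\aaa_0=\aaa_0^Q\oplus\aaa_Q$ orthogonal. (This follows by a standard averaging argument: $\aaa_Q$ is pointwise $\Weyl^Q$-fixed, whereas the action of $\Weyl^Q$ on $\aaa_0^Q$ has no nonzero fixed vectors.) Consequently, $\tau X-X\in\aaa_0^Q$ says exactly that the orthogonal projection $(\tau X)_Q$ to $\aaa_Q$ equals $X$. Because $\tau$ is an isometry, Pythagoras yields
\[
\|X\|^2=\|\tau X\|^2=\|(\tau X)_Q\|^2+\|(\tau X)^Q\|^2=\|X\|^2+\|(\tau X)^Q\|^2,
\]
so $(\tau X)^Q=0$ and $\tau X=X$.

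To close, I would invoke the standard description of point stabilizers in $\Weyl$: $\{\sigma\in\Weyl:\sigma X=X\}$ is generated by the reflections $s_\alpha$ in roots $\alpha$ with $\sprod{\alpha}{X}=0$. Since any root $\alpha$ of $G$ that is not a root of $M_Q$ restricts along $\aaa_Q$ to a nonzero element of $\aaa_Q^*$ proportional to some element of $\Phi_Q$, the regularity $\sprod{\beta}{X}\ne0$ for $\beta\in\Phi_Q$ forces any such $\alpha$ to satisfy $\sprod{\alpha}{X}\ne0$; hence $\sprod{\alpha}{X}=0$ implies $\alpha$ is a root of $M_Q$, and therefore the stabilizer of $X$ is exactly $\Weyl^Q$. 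In particular $\tau\in\Weyl^Q$, as required. There is no genuine obstacle here; the essential point is just that the Weyl-invariant inner product lets one upgrade a congruence modulo $\aaa_0^Q$ to an honest equality via the isometry property of $\tau$.
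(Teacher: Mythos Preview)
Your proof is correct and takes a genuinely different route from the paper's. The paper first reduces, via conjugation by elements of $\Weyl(Q,Q_1)$ and $\Weyl(P_1,P)$ and \cite{MR1361168}*{I.1.10}, to the special case $Q=P$, $w=e$ with $\Ht_P(a)$ lying in the positive Weyl chamber of $\aaa_P$; it then appeals to the combinatorial fact that for any $w'\in\Weyl$ the coordinates of $w'^{-1}\Ht_P(a)-\Ht_P(a)$ in the basis $\srts_0^\vee$ are all non-positive, with the $\beta^\vee$-coordinate strictly negative whenever $\beta\in\srts_0\setminus\srts_0^P$ and $w'\beta<0$. Your argument bypasses both the reduction and the coroot-coordinate computation: you work directly on the $Q$-side, use a $\Weyl$-invariant inner product and the orthogonality of $\aaa_0^Q$ and $\aaa_Q$ to turn the congruence $\tau X\equiv X\pmod{\aaa_0^Q}$ into the equality $\tau X=X$ via Pythagoras, and then invoke the standard description of point stabilizers in a Weyl group. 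The paper's approach extracts slightly more precise sign information (which coordinate is negative), though this extra precision is not used later; your approach is cleaner and more conceptual, and applies without moving $X$ into a particular chamber.
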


\begin{proof}
Given $Q_1\in\stds$ and $w_1\in\Weyl(Q,Q_1)$ the validity of statement of the lemma does not change if we
replace $Q$ by $Q_1$, $a$ by $waw^{-1}$ and $w$ by $w_1w$. Likewise,
given $P_1\in\stds$ and $w_1\in\Weyl(P_1,P)$ the validity of statement of the lemma does not change if we
replace $P$ by $P_1$ and $w$ by $ww_1$.
Therefore, by \cite{MR1361168}*{I.1.10} we may assume that $Q=P$, $w=e$ and $\Ht_P(a)$ is in the positive Weyl chamber of $\aaa_P$.
In this case, it is well known that for any $w'\in\Weyl$ the coefficients of $w'^{-1}\Ht_P(a)-\Ht_P(a)$ in the basis $\Delta_0^\vee$
are non-positive, and the $\beta^\vee$-th coefficient is negative whenever $\beta\in\srts_0\setminus\srts_0^P$ and $w'\beta<0$.
In particular, $w'^{-1}\Ht_P(a)-\Ht_P(a)\notin\aaa_0^P$ if $w'\notin\,_P\Weyl_P\setminus\{e\}$
(and in fact, for any $w'\notin\Weyl^P$ since this property is $\Weyl^P$-bi-invariant in $w'$).
\end{proof}

Now let $\varphi\in\AF_P$.
For any $P'\subset P$ let $m_{P'}\ge0$ be an integer and $\underline{\mu}_{P'}\in\chars_{P'}^{m_{P'}}$
be such that $\cnst_{P,P'}\varphi\in\AF_{P'}(\underline{\mu}_{P'})$.

Consider the constant term $\cnst_{G,Q}E(\varphi,\lambda)$ given by \eqref{eq: EserCT}.
For any $w'\in\,_Q\Weyl_P$ we have
\[
E^Q(M(w',\lambda)(\cnst_{P,P_{w'}}\varphi),w'\lambda)\in\AF_Q(w'(\underline{\mu}_{P_{w'}}+\lambda)).
\]
Fix a regular element $a$ of $\AAA_Q$ and consider the difference operator
\[
D(\lambda)=\prod_{w'\in\,_Q\Weyl_P\setminus\{w\}}
\diff^{Q,w'(\underline{\mu}_{P_{w'}}+\lambda)-w\lambda}_a.
\]
Then, for any $w'\in\,_Q\Weyl_P\setminus\{w\}$
\[
D(\lambda) (E^Q(M(w',\lambda)(\cnst_{P,P_{w'}}\varphi),w'\lambda)_{-w\lambda})\equiv0
\]
and hence by \eqref{eq: EserCT} we have
\[
D(\lambda)((\cnst_{G,Q}E(\varphi,\lambda))_{-w\lambda})=D(\lambda)(M(w,\lambda)\varphi).
\]
On the other hand, by the lemma above,
$w'^{-1}\Ht_Q(a)\notin w^{-1}\Ht_Q(a)+\aaa_0^P$ for every $w'\in\,_Q\Weyl_P\setminus\{w\}$.
Let $f(x)$ be the polynomial
\[
f(x)=\prod_{i=1}^{m_P}(x-a^{w((\underline{\mu}_P)_i)})
\]
and let
\[
g(\lambda,x)=\prod_{w'\in\,_Q\Weyl_P\setminus\{w\}}
\prod_{i=1}^{m_{P_{w'}}}(x-a^{w'((\underline{\mu}_{P_{w'}})_i+\lambda)-w\lambda})
\]
which is a polynomial in $x$ whose coefficients are analytic functions in $\lambda\in\chars_P$.
The polynomials $f$ and $g(\lambda,\cdot)$ are coprime for generic $\lambda$.
Let $R(\lambda,x)$ be their resultant, which is a non-zero polynomial in $x$
whose coefficients are analytic functions in $\lambda\in\chars_P$.
Let $T_a$ be the linear transformation $\varphi\mapsto a\cdot\varphi$ on $\AF_Q(w\underline{\mu}_P)$.
Then, $f(T_a)=0$ and the restriction $\tilde D(\lambda)$ of $D(\lambda)$ to $\AF_Q(w\underline{\mu}_P)$ is
$g(\lambda,T_a)$. Hence, $\tilde D(\lambda)$ is invertible for generic $\lambda\in\chars_P$
and $R(\lambda,T_a)\tilde D(\lambda)^{-1}$ extends to a holomorphic function on $\chars_P$.
Thus,
\[
M(w,\lambda)\varphi=\tilde D(\lambda)^{-1}(D(\lambda)((\cnst_{G,Q}E(\varphi,\lambda))_{-w\lambda}))
\]
and this provides meromorphic continuation of $M(w,\lambda)\varphi$.

\begin{remark}
For $Q=P$ and $w=e$, the argument above shows that on any finite-dimensional linear subspace of $\AF_P$
\begin{equation} \label{eq: eisinj}
\text{the operator $\varphi\mapsto E(\varphi,\lambda)$ is injective for generic $\lambda\in\chars_P$.}
\end{equation}
\end{remark}

\subsection{Functional equations and singularities}

Let $w'\in\Weyl(P,P')$ and suppose that $\sprod{w'\Re\lambda}{\alpha^\vee}\gg0$ for all $\alpha\in\srts_{P'}$.
Then, on the one hand, by Proposition \ref{prop: unique}, we have
\[
\lead(E(M(w',\lambda)\varphi,w'\lambda))=\lead_{P'}((M(w',\lambda)\varphi)_{w'\lambda}).
\]
On the other hand, by Lemma \ref{lem: far} (applied to $w'\lambda$ and $ww'^{-1}\in\Weyl^{\supset Q}(P')$)
and \eqref{eq: cuspew} we have
\[
\lead(E(\varphi,\lambda))=\lead_{P'}((M(w',\lambda)\varphi)_{w'\lambda}).
\]
Therefore, the functional equation $E(M(w',\lambda)\varphi,w'\lambda)=E(\varphi,\lambda)$
follows from Theorem \ref{thm: lead}.

Moreover, if $w\in\Weyl(P,P')$ and $w'\in\Weyl(P',P'')$, then
\[
E(M(w'w,\lambda)\varphi,w'w\lambda)=E(\varphi,\lambda)=E(M(w,\lambda)\varphi,w\lambda)=
E(M(w',w\lambda)M(w,\lambda)\varphi,w'w\lambda).
\]
Thus, $M(w'w,\lambda)=M(w',w\lambda)M(w,\lambda)$ by \eqref{eq: eisinj}.

Consider now the singularities of $M(w,\lambda)$.
If $w$ is an elementary symmetry $s_\alpha$ for some $\alpha\in\srts_P$,
then as a function of $\lambda\in\chars_P$, $M(w,\lambda)$ depends only on $\sprod{\lambda}{\alpha^\vee}$.
In general, by decomposing $w$ into elementary symmetries \cite{MR1361168}*{I.1.8} and using the multiplicativity of
intertwining operators (cf.\ \cite{MR1361168}*{II.1.6}), it follows that
the singularities of $M(w,\lambda)$ are of the form $\sprod{\lambda}{\beta^\vee}=c$ for some
$\beta\in\Phi_P$ such that $w\beta<0$ and $c\in\C$.

On the other hand, the singularities of $E(\varphi,\lambda)$ are precisely those of its cuspidal components
\cite{MR1361168}*{I.4.10}. It follows from \eqref{eq: consterm} that the singularities of $E(\varphi,\lambda)$ are also
along root hyperplanes.\footnote{Note however, that this argument by itself does not imply that $E(\varphi,\lambda)$ is holomorphic
on $\iii\aaa_P^*$ in case $\varphi\in\AF_P^2$ since the cuspidal components of $E(\varphi,\lambda)$ involve intertwining operators
applied to the cuspidal components of $\varphi$ rather than $\varphi$ itself.}

This finishes the proof of Theorem \ref{thm: main}.

We remark that the proof shows the following (ostensibly) slightly stronger statement.
\begin{corollary}
For any bounded open subset $U$ of $\chars_P$,
there exists $r>0$ such that $\lambda\mapsto E(\varphi,\lambda)$
is a meromorphic function on $U$, with singularities along finitely many root hyperplanes, into
the Fr\'echet space $\functb^R_{\smth}(\autspace)$.
In other words, for any $\lambda_0\in\chars_P$ there exist $R>0$ and an integer $k\ge0$ such that the function
\[
\big(\prod_{\beta\in\Phi_P}\sprod{\lambda-\lambda_0}{\beta^\vee}^k\big)E(\varphi,\lambda)
\]
admits a convergent power series expansion in $\functb^R_{\smth}(\autspace)$ around $\lambda_0$,
\end{corollary}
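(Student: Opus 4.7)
The plan is to extract the sharper conclusion by revisiting the proof of the theorem and keeping track of the Fr\'echet space in which the local sections actually live. Recall from the proof of Theorem \ref{thm: mainfin} that the local finiteness of $\Xi_{\at}(s)$ was not merely proved in $\umd(\autspace_G)$; instead, we constructed an analytic family of operators $\delta(h_i(s))p_G^{T_0',\aut}\pi_G\mu_s:L\rightarrow\functb_{\smth}^{r_2}(\autspace_G)$, whose image contains $\Sol(\Xi_{\at}(s))$. The integer $r_2$ only depends on the compact support $C$ of $h_i(s)$ and on the weight function $\wgt$ (via $R$ and $\varpi$), and these in turn depend only on $r_1$, which itself was produced by Lemma \ref{lem: getr} from a bound on the exponents $\underline{\lambda}_P(s)$.

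First, I would fix a bounded open $U\subset\chars_P$ and rerun the construction of the system $\Xi_{\fnl}(\lambda)$ of \S\ref{sec: equations} restricted to $\lambda\in U$. The exponents appearing in $\Xi_{\fnl}$, namely the tuples $w(\underline{\mu}_{P_w}+\lambda)$ with $w\in\Weyl(P;Q)$, are then bounded as $\lambda$ ranges over $U$. Accordingly Lemma \ref{lem: getr} supplies a single $r_1=r_1(U)>0$ with $\Sol(\Xi_{\fnl}(\lambda))\subset\functb^{r_1}(\autspace_G)$ for all $\lambda\in U$, and the argument from ``Proof of Theorem \ref{thm: mainfin}'' then produces a finite-dimensional $L$ and an analytic family $\tilde\mu_\lambda:L\rightarrow\functb_{\smth}^{r}(\autspace_G)$, $\lambda\in U$ (after possibly shrinking $U$ and enlarging $r$), with $\Sol(\Xi_{\fnl}(\lambda))\subset\tilde\mu_\lambda(L)$.

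Next, I would apply the principle of meromorphic continuation (Theorem \ref{thm: anal}) in the Fr\'echet space $\functb_{\smth}^r(\autspace_G)$ rather than in $\umd(\autspace_G)$. The hypotheses are unchanged, since $\Xi_{\fnl}$ is still a holomorphic system, locally of finite type with values in $\functb_{\smth}^r(\autspace_G)$, and has $E(\varphi,\lambda)$ as unique solution for $\sprod{\Re\lambda}{\alpha^\vee}\gg0$ (Proposition \ref{prop: Ximain}). The output is that on $U$, $\lambda\mapsto E(\varphi,\lambda)$ is meromorphic with values in $\functb_{\smth}^r(\autspace_G)$. The description of the singularities as root hyperplanes was already obtained at the end of \S\ref{sec: conclusion} by reducing to constant terms and elementary symmetries, so on any bounded $U$ only finitely many such hyperplanes intervene, giving the first assertion by a compactness covering argument.

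Finally, for the local power series expansion at $\lambda_0\in\chars_P$, I would combine the meromorphic continuation in $\functb_{\smth}^r(\autspace_G)$ with the observation, implicit in the proof of Theorem \ref{thm: anal} via Cramer's rule, that poles are controlled by a nonvanishing scalar denominator. Since the singularities are along root hyperplanes $\sprod{\lambda-\lambda_0}{\beta^\vee}=0$, $\beta\in\Phi_P$, and the orders of these poles are locally bounded, multiplying by $\prod_\beta\sprod{\lambda-\lambda_0}{\beta^\vee}^k$ for sufficiently large $k$ yields a holomorphic function into $\functb_{\smth}^r(\autspace_G)$ in a neighborhood of $\lambda_0$. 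Since $\functb_{\smth}^r(\autspace_G)$ is a Fr\'echet space, the cited result from \cite{MR0058865} on holomorphic functions with values in quasi-complete \lctvs s guarantees a convergent power series expansion. The main technical point I anticipate having to verify carefully is the uniformity in $\lambda$ of the constants $r,r_1,r_2$ across $U$; this boils down to checking that the bounds in Lemmas \ref{lem: getr}, \ref{lem: deltah} and in the construction of $\liftPE_P^{T_0,R}$ depend only on a uniform bound on $\underline{\lambda}_P(s)$ over $U$.
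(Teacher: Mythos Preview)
Your proposal is correct and follows precisely the approach the paper has in mind: the corollary is stated without proof, prefaced by ``the proof shows the following (ostensibly) slightly stronger statement,'' so the intended argument is exactly to trace back through the proofs of Theorem \ref{thm: mainfin} and Proposition \ref{prop: Ximain} and observe that the local sections $\lambda_s$ produced by the principle of meromorphic continuation actually land in a fixed $\functb_{\smth}^r(\autspace_G)$, with $r$ depending only on a bound for the exponents over $U$. One small point you should make explicit: to invoke Theorem \ref{thm: anal} you need $\mnfld_{\unq}^\circ$ nonempty, so if the given bounded $U$ does not meet the region $\sprod{\Re\lambda}{\alpha^\vee}\gg0$, first enlarge it to a bounded connected open set that does; the uniform $r$ you obtain for the enlargement then works for $U$ as well.
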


\section{The function field case} \label{sec: conclusion2}

In this section we prove the main result in the function field case.
The argument is analogous to the number field case but it is simpler since we don't need any analysis
(in particular, local finiteness).
Throughout this section $F$ is a function field.

\subsection{Characterization of automorphic forms} \label{sec: conc2}
Let $P\in\stds$. In the number field case we considered the space $\umd(\autspace_P)$ of smooth functions
of uniform moderate growth. In the function field case we consider instead the space $\umdf(\autspace_P)$
of all functions on $\autspace_P$ that are right-invariant under some open subgroup of $G(\A)$.
Let $\rapid(\autspace_P)$ be the subspace of compactly supported locally constant functions.
We can identify $\umdf(\autspace_P)$ with the smooth part of the conjugate dual of $\rapid(\autspace_P)$
by the sesquilinear pairing $(\cdot,\cdot)_{\autspace_P}$ \eqref{eq: innerP}.

Analogously, we denote by $\tilde\AF_P^{\cusp}$ the linear span of the
functions of the form $(f\circ\Ht_P) \cdot\varphi$ where $f\in C_c(\Ht_M(M(\A)))$ and
$\varphi\in\AF_P^{\cusp}$ . (Recall that $\Ht_M(M(\A))$ is a lattice in $\aaa_P$.)
By \cite{MR1361168}*{I.2.9}, $\tilde\AF_P^{\cusp}\subset\rapid(\autspace_P)$.
Denote by $(\tilde\AF_P^{\cusp})^\perp\subset\umdf(\autspace_P)$ its annihilator with respect to $(\cdot,\cdot)_{\autspace_P}$. We have a direct sum decomposition
\begin{equation} \label{eq: dscnc}
\umdf(\autspace_P)=\umdf_{\cusp}(\autspace_P)\oplus(\tilde\AF_P^{\cusp})^\perp
\end{equation}
where
\[
\umdf_{\cusp}(\autspace_P)=\{\phi\in\umdf(\autspace_P)\mid\cnst_{P,Q}\phi\equiv0\text{ for all }Q\subsetneq P\}.
\]
We denote by $\phi\mapsto\phi^{\cusp}$ the projection $\umdf(\autspace_P)\rightarrow\umdf_{\cusp}(\autspace_P)$
with respect to \eqref{eq: dscnc}.
As usual, we write $\cnst_{G,P}^{\cusp}\phi=(\cnst_{G,P}\phi)^{\cusp}$ for any $\phi\in\umdf(\autspace)$.
By the argument of \cite{MR1361168}*{I.3.4},
\begin{equation} \label{eq: cncn}
\text{a function $\phi\in\umdf(\autspace)$ is identically $0$ if and only if
$\cnst_{G,P}^{\cusp}\phi\equiv0$ for all $P\in\stds$.}
\end{equation}

%cusp forms are compactly supported modulo the center in the function field case \cite{MR1361168}*{I.2.9}.
%that are right invariant under some open subgroup of $G(\A)$ (smooth functions).

For any $\lambda\in\charsA_P$ and $a\in\AAA_P$ consider the difference operator
\[
\diff_a^{P,\lambda}\varphi=a\cdot \varphi-a^\lambda\varphi
\]
on functions on $\autspace_P$.
These operators commute.
For $\underline{\lambda}=(\lambda_1,\dots,\lambda_n)\in\charsA_P^n$ and $\underline{a}=(a_1,\dots,a_n)\in\AAA_P^n$ we write
\[
\diff_{\underline{a}}^{P,\underline{\lambda}}=\prod_{i=1}^n\diff_{a_i}^{P,\lambda_i}.
\]

By the argument of \cite{MR1361168}*{I.3.6} a function $\phi\in\umdf_{\cusp}(\autspace_P)$ belongs to $\AF_P^{\cusp}$
if and only if there exists $n\ge0$ and $\underline{\lambda}\in\charsA_P^n$ such that
\[
\diff_{\underline{a}}^{P,\underline{\lambda}}\phi\equiv0\text{ for all }\underline{a}\in\AAA_P^n.
\]

Using \cite{MR1361168}*{I.3.5} we infer
\begin{lemma} \label{lem: autcusp2}
Let $\phi\in\umdf(\autspace)$. % be a $\K$-finite function of uniform moderate growth on $\autspace$.
Then, $\phi\in\AF_G$ if and only if for every $P\in\stds$ there exist an integer $n\ge0$ and $\underline{\lambda}\in\charsA_P^n$
such that $\diff_{\underline{a}}^{P,\underline{\lambda}}(\cnst_{G,P}\phi)\in(\tilde\AF_P^{\cusp})^\perp$
for all $\underline{a}\in\AAA_P^n$.
\end{lemma}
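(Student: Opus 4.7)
The ``only if'' direction should be immediate from general structure theory of automorphic forms in the function field setting. If $\phi\in\AF_G$, then $\cnst_{G,P}\phi\in\AF_P$ by standard properties of constant terms. The $\AAA_P$-decomposition \eqref{eq: AFd} (together with the finiteness of cuspidal exponents for $\cnst_{G,P}\phi$, which follows from $\phi$ being automorphic) shows that $\cnst_{G,P}\phi\in\AF_P(\underline{\mu})$ for some tuple $\underline{\mu}\in\charsA_P^n$, where $n$ is the sum of the generalized eigenspace dimensions with respect to the twisted $\AAA_P$-action. By the defining characterization of $\AF_P(\underline{\mu})$ as the kernel of all products $\diff_{\underline z}^{P,\underline{\mu}}$ (see \S\ref{sec: polyexp} and \eqref{eq: defpe}), this is equivalent to $\diff_{\underline z}^{P,\underline{\mu}}(\cnst_{G,P}\phi)\equiv 0$ for all $\underline z\in\AAA_P^n$, which is \emph{a fortiori} in $(\tilde\AF_P^{\cusp})^\perp$.

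For the ``if'' direction, the plan is to promote the hypothesis (vanishing modulo $(\tilde\AF_P^{\cusp})^\perp$) into exact vanishing, by iterating over nested parabolics and invoking the function field analog of \eqref{eq: cn0} (namely \cite{MR1361168}*{I.3.6}, which states that a smooth $\psi\in\umdf(\autspace_P)$ with $\cnst_{P,Q}\psi\in(\tilde\AF_Q^{\cusp})^\perp$ for every $Q\subset P$ vanishes identically). Concretely, for each $P\in\stds$, let $n_P,\underline{\lambda}_P$ be as in the hypothesis. I would show by induction on the semisimple rank of $P$ that
\[
\big(\prod_{Q\subset P}\diff_{\underline z_Q}^{Q,\underline{\lambda}_Q}\big)\cnst_{G,P}\phi\equiv 0
\]
for all choices of $\underline z_Q\in\AAA_Q^{n_Q}$.

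The induction step proceeds by computing $\cnst_{P,Q'}$ of the above expression for each $Q'\subset P$. The key point is that the twisted action of $\AAA_{Q'}\supset\AAA_P$ commutes (up to an explicit $\modulus$-factor absorbed in the twist $a\cdot\varphi$) with the constant term map $\cnst_{P,Q'}$. Thus $\cnst_{P,Q'}$ of the displayed product can be rewritten as a similar product of difference operators applied to $\cnst_{G,Q'}\phi$ (possibly with the parameters $\underline{\lambda}_Q$ adjusted by projection to $\charsA_{Q'}$). By the inductive hypothesis applied to $Q'$, and using the hypothesis of the lemma for the factor $\diff_{\underline z_{Q'}}^{Q',\underline{\lambda}_{Q'}}$ at the top, this constant term lies in $(\tilde\AF_{Q'}^{\cusp})^\perp$. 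Invoking \cite{MR1361168}*{I.3.6} then forces the full product to vanish. The resulting identity for $P$ itself means precisely that $\cnst_{G,P}\phi\in\AF_P(\underline{\mu}_P')$ for the tuple $\underline{\mu}_P'$ obtained by concatenating (the projections of) the $\underline{\lambda}_Q$, $Q\subset P$. This, together with the smoothness built into $\umdf(\autspace_G)$, is the characterization of $\phi\in\AF_G$ in the function field case.

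The main obstacle will be the bookkeeping of the modular character $\modulus_P^{1/2}$ in the commutation between the twisted $\AAA_Q$-action and the constant term $\cnst_{P,Q}$, which forces a shift in the exponents $\underline{\lambda}$ when descending from $P$ to $Q$. This shift must be tracked consistently so that the difference operators remain matched to the exponents of $\cnst_{G,Q}\phi$. However, no genuinely analytic input is needed: neither Harish-Chandra's finiteness theorem nor the local finiteness result of \S\ref{sec: LF} is required, in contrast to the number field argument for Lemma \ref{lem: autcusp}.
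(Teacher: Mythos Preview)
Your proposal is correct and reconstructs essentially the argument the paper intends: the paper does not give its own proof but simply refers to \cite{MR1361168}*{I.3.5--6}, and your sketch parallels the relevant step in the proof of Lemma~\ref{lem: autcusp} (the displayed identity $\big(\prod_{Q\subset P}\diff_{\underline{z}_Q}^{P,\underline{\lambda}_Q}\big)\cnst_{G,P}\phi\equiv0$ obtained from \eqref{eq: cn0}), stripped of the analytic finiteness input that is unnecessary in the function field case.

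Two small remarks. First, the superscript on the difference operators in your displayed product should be $P$, not $Q$: these operators act on functions on $\autspace_P$, with $\underline{z}_Q\in\AAA_P^{n_Q}$ (and $\underline{\lambda}_Q$ viewed via the convention that $\diff^{P,\lambda}$ makes sense for $\lambda\in\charsA_Q$, $Q\subset P$). Second, the induction you set up is not actually needed. For any $Q'\subset P$, applying $\cnst_{P,Q'}$ to the product yields a product of commuting difference operators on $\autspace_{Q'}$ applied to $\cnst_{G,Q'}\phi$; one of the factors is $\diff_{\underline{z}_{Q'}}^{Q',\underline{\lambda}_{Q'}}$, which already lands in $(\tilde\AF_{Q'}^{\cusp})^\perp$ by the hypothesis of the lemma, and the remaining difference operators preserve that subspace since the $\AAA_{Q'}$-action preserves $\tilde\AF_{Q'}^{\cusp}$. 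So the vanishing follows directly from the function field analogue of \eqref{eq: cn0}, without an inductive hypothesis. Your identification of the $\delta_P^{1/2}$-bookkeeping as the only technical wrinkle is accurate.
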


\subsection{}
For any $\phi\in \rapid(\autspace_P)$ and $g\in G(\A)$ the sum
\[
\sum_{\gamma\in P(F)\bs G(F)}\phi(\gamma g)
\]
has only finitely many non-zero terms and it gives rise to a $G(\A)$-equivariant linear map \index{thetaP@$\theta_P$}
\[
\theta_P:\rapid(\autspace_P)\rightarrow \rapid(\autspace)
\]
whose dual is the constant term map
\[
\cnst_{G,P}:\umdf(\autspace)\rightarrow \umdf(\autspace_P).
\]
Moreover, by the argument of \cite{MR1361168}*{II.1.12}
\[
\rapid(\autspace)=\sum_{P\in\stds}\theta_P(\tilde\AF_P^{\cusp}).
\]
In fact, this is just an equivalent formulation of \eqref{eq: cncn}.

For any compact open subgroup $K$ of $G(\A)$ denote by $\rapid(\autspace)^K$ the space of compactly supported
right $K$-invariant functions on $\autspace$, i.e., the $K$-fixed part of $\rapid(\autspace)$, and by
$\tilde\AF_P^{\cusp,K}$ the $K$-fixed part of $\tilde\AF_P^{\cusp}$. Then,
\begin{equation} \label{eq: pesudo}
\rapid(\autspace)^K=\sum_{P\in\stds}\theta_P(\tilde\AF_P^{\cusp,K}).
\end{equation}

We will need another simple fact.

\begin{lemma}
For any $P\in\stds$, an integer $n\ge0$ and a compact open subgroup $K$ of $G(\A)$ there
exists a finite subset $B\subset G(\A)$ such that for any $\underline{\lambda}\in\charsA_P^n$,
the restriction map $\phi\mapsto\phi\rest_B$ is injective on $\AF_P^{\cusp}(\underline{\lambda})^K$.
Dually, there exists a finite-dimensional subspace $L$ of $\tilde\AF_P^{\cusp,K}$ such that
for any $\underline{\lambda}\in\charsA_P^n$ we have
\[
\tilde\AF_P^{\cusp,K}=L+\sum_{\underline{a}\in\AAA_P^n}\diff_{\underline{a}}^{P,\underline{\lambda}}
(\tilde\AF_P^{\cusp,K}).
\]
\end{lemma}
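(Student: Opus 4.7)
My plan is to prove the two assertions in order, with the second following from the first by a duality argument using the non-degenerate cuspidal pairing $(\cdot,\cdot)_{\autspace_P}$.

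For the first assertion, I will combine reduction theory with a uniform finite-determination principle for polynomial-exponential functions on a lattice. In the function field case, every cuspidal automorphic form has compact support modulo the centre of $M$ (\cite{MR1361168}*{I.2.9}), and on this compact support $M(F)\bs M(\A)^1/(K\cap M(\A))$ has only finitely many classes; combining these yields $g_1,\dots,g_N\in G(\A)$ such that every $\phi\in\AF_P^{\cusp,K}$ is determined by the $N$ functions $a\mapsto\phi(ag_i)$, $a\in\AAA_P$. For $\phi\in\AF_P^{\cusp,K}(\underline\lambda)$, each of these (up to the $\modulus_P^{1/2}$ factor implicit in the twisted $\AAA_P$-action) lies in $\PE_{\AAA_P}(\underline\lambda)$, a space of dimension bounded uniformly in $\underline\lambda\in\charsA_P^n$ by some $N_0(n,d)$ with $d=\dim\AAA_P$. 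A standard finite-difference argument produces a fixed finite set $B_0\subset\AAA_P$ --- e.g.\ the image of $\{0,1,\dots,n-1\}^d$ under a choice of basis of the lattice $\Ht_P(\AAA_P)$ --- on which the restriction map is injective on $\PE_{\AAA_P}(\underline\lambda)$ for every $\underline\lambda$; then $B=\{ag_i:a\in B_0,\ 1\le i\le N\}$ does the job.

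For the dual assertion, I first identify the annihilator of $\AF_P^{\cusp,K}(\underline\lambda)$ in $\tilde\AF_P^{\cusp,K}$ (with respect to the non-degenerate pairing $(\cdot,\cdot)_{\autspace_P}$) with $\sum_{\underline a\in\AAA_P^n}\diff_{\underline a}^{P,\underline\lambda}(\tilde\AF_P^{\cusp,K})$. This is a direct computation using that the twisted $\AAA_P$-action is, up to modulus factors, self-adjoint under the pairing and that $\AF_P^{\cusp,K}(\underline\lambda)$ is precisely the generalized eigenspace corresponding to $\underline\lambda$. The desired equality thus reduces to the existence of a fixed finite-dimensional $U\subset\tilde\AF_P^{\cusp,K}$ that pairs non-degenerately with $\AF_P^{\cusp,K}(\underline\lambda)$ for every $\underline\lambda$. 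Using the first assertion, for each $b=ag_i\in B$ I take $\psi_b=(f_b\circ\Ht_P)\cdot\varphi_{i(b)}\in\tilde\AF_P^{\cusp,K}$, where $f_b\in C_c(\Ht_M(M(\A)))$ is concentrated on the single lattice point $\Ht_P(b)$ and $\varphi_{i(b)}\in\AF_P^{\cusp,K}$ is chosen (using the non-degeneracy of the pairing on each $\Ht_P$-level) so that the resulting $|B|\times|B|$ matrix of pairings $((\psi_b,\bar\phi)_{\autspace_P})_{b\in B}$ recovers the evaluations $(\phi(b))_{b\in B}$ up to an invertible linear transformation. Taking $U=\operatorname{span}\{\psi_b:b\in B\}$ then yields the dual assertion.

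The main obstacle is the precise identification of the annihilator in the dual part (as opposed to merely a one-sided containment): the easy direction is that $\sum_{\underline a}\diff_{\underline a}^{P,\underline\lambda}(\tilde\AF_P^{\cusp,K})$ lies in the annihilator, while the reverse requires a generalized-eigenspace decomposition of $\tilde\AF_P^{\cusp,K}$ as an $\AAA_P$-module together with the uniform finite-dimensionality from the first assertion. The construction of the $\psi_b$'s themselves is the other subtle point, resting on the level-by-level non-degeneracy of the cuspidal pairing and on the fact that each $(f_b\circ\Ht_P)\cdot\varphi$ manifestly lies in $\tilde\AF_P^{\cusp,K}$ by its very definition.
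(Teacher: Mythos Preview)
Your argument for the first assertion is correct and is precisely the reduction the paper has in mind: factor $\AF_P^{\cusp,K}$ as a tensor product of a fixed finite-dimensional ``slice'' space with functions on the lattice $\AAA_P$, and then invoke the elementary fact that restriction to $\{0,\dots,n-1\}^d$ is injective on $\PE_{\AAA_P}(\underline\lambda)$ for every $\underline\lambda$.

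For the dual assertion, however, your route is more circuitous than necessary and contains a confused step. The paper's intention is to apply the \emph{same} tensor decomposition to $\tilde\AF_P^{\cusp,K}$ (which is essentially $W'\otimes C_c(\Lambda)$ for a finite-dimensional $W'$ and the lattice $\Lambda=\Ht_M(M(\A))$) and reduce directly to the elementary lattice statement: the delta functions at $B_0=\{0,\dots,n-1\}^d$ span $C_c(\Lambda)\big/\sum_{\underline a}D_{\underline a}^{\underline\lambda}(C_c(\Lambda))$, uniformly in $\underline\lambda$. This follows immediately from the recurrence $\prod_{i=1}^n(T_{e_j}-e_j^{\lambda_i})\in\sum_{\underline a}D_{\underline a}^{\underline\lambda}$ for each basis vector $e_j$, which lets one reduce any $\delta_v$ modulo the image. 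No pairing is needed.

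Your duality argument can be salvaged, but two points require correction. First, under the sesquilinear pairing $(\cdot,\cdot)_{\autspace_P}$ the adjoint of $\diff_a^{P,\lambda}$ is $\diff_{a^{-1}}^{P,-\bar\lambda}$, so $\sum_{\underline a}\diff_{\underline a}^{P,\underline\lambda}(\tilde\AF_P^{\cusp,K})$ lies in the annihilator of $\AF_P^{\cusp,K}(-\bar{\underline\lambda})$, not of $\AF_P^{\cusp,K}(\underline\lambda)$; this is ultimately harmless since both claims are uniform in $\underline\lambda$, but it should be stated. Second, your proposed justification of the reverse inclusion via ``a generalized-eigenspace decomposition of $\tilde\AF_P^{\cusp,K}$ as an $\AAA_P$-module'' does not make sense: the functions in $\tilde\AF_P^{\cusp,K}$ are compactly supported in the $\AAA_P$-direction, so the only generalized eigenvector is $0$. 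The correct argument is a codimension count (both subspaces have codimension $\dim\AF_P^{\cusp,K}(-\bar{\underline\lambda})$ and one contains the other), but that count is just the direct lattice computation above in disguise, so the duality detour gains nothing.
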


Indeed, there exists a finite set $B_1\subset G(\A)$ such that the support of any right $K$-invariant function in $\AF_P^{\cusp}$
(or in $\tilde\AF_P^{\cusp}$)
is contained in $U(\A)M(F)\AAA_PB_1K$. (This easily follows from \cite{MR1361168}*{I.2.9} applied to $M$.)
Hence, the first part of the lemma reduces to the analogous statement about functions on the lattice $\AAA_P$, which is elementary. (We can take $B$ to be $B_2B_1$ for a suitable finite subset $B_2$ of $\AAA_P$.)
For the dual statement, note that the algebraic dual of $\tilde\AF_P^{\cusp,K}$ is $\umdf_{\cusp}(\autspace_P)^K$
and the annihilator of
\[
\sum_{\underline{a}\in\AAA_P^n}\diff_{\underline{a}}^{P,\underline{\lambda}}(\tilde\AF_P^{\cusp,K})
\]
in $\umdf_{\cusp}(\autspace_P)^K$ is $\AF_P^{\cusp}(\underline{\lambda})^K$.
Hence, we can take $L$ to be the projection under $\phi\mapsto\phi^{\cusp}$ of the space of
right $K$-invariant functions on $\autspace_P$ supported in $U(\A)M(F)BK$.

By \eqref{eq: pesudo}, we conclude

\begin{corollary} \label{cor: commonU}
For any compact open subgroup $K$ of $G(\A)$ and an integer $n\ge0$ there exists a finite-dimensional subspace $U$ of
$\rapid(\autspace)^K$ such that
\[
\rapid(\autspace)^K=U+\sum_{P\in\stds,\underline{a}\in\AAA_P^n}\theta_P(\diff_{\underline{a}}^{P,\underline{\lambda}}
(\tilde\AF_P^{\cusp,K}))
\]
for any $\underline{\lambda}\in\charsA_P^n$.
\end{corollary}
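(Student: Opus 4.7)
The plan is to deduce the corollary directly by combining the preceding lemma with the decomposition \eqref{eq: pesudo}. Since $\stds$ is finite, the obstacle to producing a \emph{single} finite-dimensional $U$ is merely that one must choose a complement $U_P$ inside each $\tilde\AF_P^{\cusp,K}$ that works uniformly in the parameter $\underline{\lambda}\in\charsA_P^n$; this is exactly what the lemma provides.

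Concretely, apply the lemma for each $P\in\stds$ (with the given $K$ and $n$) to obtain a finite-dimensional subspace $U_P\subset\tilde\AF_P^{\cusp,K}$ such that
\[
\tilde\AF_P^{\cusp,K}=U_P+\sum_{\underline{a}\in\AAA_P^n}\diff_{\underline{a}}^{P,\underline{\lambda}}(\tilde\AF_P^{\cusp,K})
\]
for every $\underline{\lambda}\in\charsA_P^n$. Set
\[
U=\sum_{P\in\stds}\theta_P(U_P)\subset\rapid(\autspace_G)^K,
\]
which is finite-dimensional because $\stds$ is finite and each $U_P$ is. Note that $U\subset\rapid(\autspace_G)^K$ is legitimate: $U_P\subset\tilde\AF_P^{\cusp,K}\subset\rapid(\autspace_P)$ by \cite{MR1361168}*{I.2.9}, and $\theta_P$ preserves right $K$-invariance.

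Now let $\phi\in\rapid(\autspace_G)^K$. By \eqref{eq: pesudo}, $\phi=\sum_{P\in\stds}\theta_P(\phi_P)$ for some $\phi_P\in\tilde\AF_P^{\cusp,K}$. Applying the lemma to each $\phi_P$, write $\phi_P=u_P+\sum_{\underline{a}}\diff_{\underline{a}}^{P,\underline{\lambda}}(\psi_{P,\underline{a}})$ with $u_P\in U_P$, a finite sum in $\underline{a}\in\AAA_P^n$, and $\psi_{P,\underline{a}}\in\tilde\AF_P^{\cusp,K}$. The operators $\diff_{\underline{a}}^{P,\underline{\lambda}}$ preserve $\tilde\AF_P^{\cusp,K}$ since the twisted action of $\AAA_P$ preserves cuspidality and $K$-invariance, so $\diff_{\underline{a}}^{P,\underline{\lambda}}(\psi_{P,\underline{a}})\in\tilde\AF_P^{\cusp,K}$. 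Applying $\theta_P$ term by term and summing over $P$ yields
\[
\phi=\sum_{P\in\stds}\theta_P(u_P)+\sum_{P\in\stds}\sum_{\underline{a}\in\AAA_P^n}\theta_P\bigl(\diff_{\underline{a}}^{P,\underline{\lambda}}(\psi_{P,\underline{a}})\bigr),
\]
which lies in $U+\sum_{P,\underline{a}}\theta_P(\diff_{\underline{a}}^{P,\underline{\lambda}}(\tilde\AF_P^{\cusp,K}))$, as required.

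There is no real obstacle here; the argument is essentially bookkeeping. The only place where the uniformity in $\underline{\lambda}$ enters is in the choice of $U_P$, which is delivered by the preceding lemma, and that in turn reduces to the elementary statement that for a lattice $L$ and fixed $n$, one can choose a \emph{common} finite set $B\subset L$ that separates functions in any polynomial-exponential space of ``length'' $n$ on $L$.
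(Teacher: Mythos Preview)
Your proof is correct and is exactly the argument the paper has in mind: the corollary is stated immediately after the lemma with the sentence ``By \eqref{eq: pesudo}, we conclude'', and your write-up simply spells out the obvious combination of the lemma (applied to each $P$) with the decomposition \eqref{eq: pesudo}, using that $\stds$ is finite and $\theta_P$ is linear and preserves $K$-invariance.
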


\subsection{The system of linear equations} \label{sec: equations2}

Fix $P\in\stds$ and $\varphi\in\AF_P$.

For any $Q\subset P$ fix $m_Q\ge0$ and $\underline{\mu}_Q^{\cusp}\in\charsA_Q^{m_Q}$ such that
$\cnst_{P,Q}^{\cusp}\varphi\in\AF_Q(\underline{\mu}_Q^{\cusp})$.
As usual, for any $\lambda\in\chars_P$, denote by $\tilde\lambda$ its image under the projection map \eqref{eq: restA}.

\begin{proposition} \label{prop: Ximain2}
In the region $\sprod{\Re\lambda}{\alpha^\vee}\gg0$ $\forall\alpha\in\srts_P$, the Eisenstein series
$E(\varphi,\lambda)$ is the unique function $\psi\in\umdf(\autspace)$ satisfying the linear equations
\begin{equation} \label{eq: nonhom2}
\prod_{w\in\Weyl^{\supset Q}(P)\setminus\{e\}}\diff^{Q,w(\underline{\mu}_{P_w}^{\cusp}+\tilde\lambda)}_{\underline{a}_w}
(\cnst_{G,Q}\psi-\cnst_{P,Q}\varphi_\lambda)\in(\tilde\AF_Q^{\cusp})^\perp
\end{equation}
for any $Q\in\stds$ and any collection $\underline{a}_w\in\AAA_Q^{m_{P_w}}$, $w\in\Weyl^{\supset Q}(P)\setminus\{e\}$.
\end{proposition}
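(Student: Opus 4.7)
The plan is to establish existence and uniqueness separately, in close analogy with the number field Proposition \ref{prop: Ximain}, but without the Harish-Chandra equations \eqref{eq: HCeis} (which are surplus in this setting).

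For existence, fix $Q \in \stds$ and a collection $\underline{z}_w \in \AAA_Q^{m_{P_w}}$. In the range of absolute convergence, $E(\varphi,\lambda)$ is automorphic, and relation \eqref{eq: cuspe} of Corollary \ref{cor: uniq} gives
\[
\cnst_{G,Q}^{\cusp}(E(\varphi,\lambda))-\cnst_{P,Q}^{\cusp}(\varphi_\lambda) \in \AF_Q^{\cusp}\bigl(\vee_{w\in\Weyl(P;Q)\setminus\{e\}}w(\underline{\mu}_{P_w}^{\cusp}+\tilde\lambda)\bigr).
\]
By the very definition of $\PE_{\AAA_Q}$, any function in this space is annihilated by the operator $D_Q(\underline{z}) := \prod_{w\in\Weyl(P;Q)\setminus\{e\}}\diff^{Q,w(\underline{\mu}_{P_w}^{\cusp}+\tilde\lambda)}_{\underline{z}_w}$. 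Since each $\diff^{Q,\mu}_z$ preserves automorphic forms and commutes with cuspidal projection (being a linear combination of a translation and a scalar), the cuspidal projection of $D_Q(\underline{z})(\cnst_{G,Q}E(\varphi,\lambda)-\cnst_{P,Q}\varphi_\lambda)$ vanishes; this is precisely the statement that the function lies in $(\tilde\AF_Q^{\cusp})^\perp$.

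For uniqueness, let $\psi \in \umdf(\autspace_G)$ be any solution and set $\psi':=\psi-E(\varphi,\lambda)$. Subtracting the two instances of \eqref{eq: nonhom2} (for $\psi$ and for $E(\varphi,\lambda)$) gives
\[
D_Q(\underline{z})\,\cnst_{G,Q}\psi' \in (\tilde\AF_Q^{\cusp})^\perp \qquad \forall\,Q \in \stds,\ \forall\,\underline{z}.
\]
At each $Q$, the concatenation $\underline{\lambda}'_Q := \vee_{w\in\Weyl(P;Q)\setminus\{e\}}w(\underline{\mu}_{P_w}^{\cusp}+\tilde\lambda)$ fulfills the hypothesis of Lemma \ref{lem: autcusp2}, which therefore promotes $\psi'$ to an automorphic form. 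Taking cuspidal projection of the displayed relation then yields $D_Q(\underline{z})\cnst_{G,Q}^{\cusp}\psi' \equiv 0$ for every $\underline{z}$, so $\cnst_{G,Q}^{\cusp}\psi' \in \AF_Q^{\cusp}(\underline{\lambda}'_Q)$; that is, every normalized cuspidal exponent of $\psi'$ along $Q$ is of the form $w(\mu+\tilde\lambda)$ with $w \in \Weyl(P;Q)\setminus\{e\}$ and $\mu$ a coordinate of $\underline{\mu}_{P_w}^{\cusp}$.

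It remains to rule out leading exponents. The corresponding unnormalized exponent is $w(\mu+\tilde\lambda)+\rho_Q \in \aaa_Q^*$, and by Lemma \ref{lem: far}, taking $\sprod{\Re\lambda}{\alpha^\vee}$ sufficiently large for $\alpha \in \srts_P$ (in terms of the bounds on the coordinates of $\underline{\mu}_{P_w}^{\cusp}$ and $\rho_Q$) produces some $\alpha \in \srts_Q$ on which $\sprod{w\Re\tilde\lambda}{\alpha^\vee}$ is negative enough to overcome the bounded contributions $w\Re\mu+\rho_Q$. Hence this exponent fails to lie in $\aaa_{0,+}^*$, so $\lead(\psi')=0$, and Theorem \ref{thm: lead} forces $\psi'\equiv 0$. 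The only nontrivial step is the appeal to Lemma \ref{lem: autcusp2} to recognize $\psi'$ as automorphic purely from cuspidal pairing data; this is what replaces the combined use of \eqref{eq: HCeis} and \eqref{eq: nonhom} in the number field Proposition \ref{prop: Ximain}.
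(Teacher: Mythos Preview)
Your proof is correct and takes essentially the same approach as the paper. The only cosmetic difference is that the paper applies Lemma~\ref{lem: autcusp2} directly to $\psi$ (after multiplying \eqref{eq: nonhom2} by the extra $w=e$ factor to obtain \eqref{eq: allw2}), rather than to the difference $\psi'=\psi-E(\varphi,\lambda)$, and then invokes the packaged uniqueness statement \eqref{eq: Eunique} instead of unpacking Lemma~\ref{lem: far} and Theorem~\ref{thm: lead} explicitly.
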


\begin{proof}
The equations \eqref{eq: nonhom2} are satisfied for $\psi=E(\varphi,\lambda)$ by \eqref{eq: cuspe} .

Conversely, suppose that $\psi$ satisfies \eqref{eq: nonhom2} for some $\lambda\in\chars_P$.
Then, for every $Q\in\stds$
\begin{equation} \label{eq: allw2}
\prod_{w\in\Weyl^{\supset Q}(P)}
\diff^{Q,w(\underline{\mu}_{P_w}^{\cusp}+\tilde\lambda)}_{\underline{a}_w}(\cnst_{G,Q}\psi)\in(\tilde\AF_Q^{\cusp})^\perp
\end{equation}
for any collection $\underline{a}_w\in\AAA_Q^{m_{P_w}}$, $w\in\Weyl^{\supset Q}(P)$.
Thus, by Lemma \ref{lem: autcusp2}, $\psi$ is an automorphic form.
Hence, the relation \eqref{eq: nonhom2} is now equivalent to \eqref{eq: cuspe}.
It follows from \eqref{eq: Eunique} that $\psi=E(\varphi,\lambda)$ provided that $\sprod{\Re\lambda}{\alpha^\vee}\gg0$
for all $\alpha\in\srts_P$.
\end{proof}

\subsection{Algebraic version of the principle of meromorphic continuation} \label{sec: AVPMC}
Recall that the principle of meromorphic continuation admits an easier algebraic analogue
which has already been used many times in the literature (see \cite{MR892097}*{p. 127} or \cite{MR1671189}*{\S1}).

We first introduce some terminology.
Let $V$ be a vector space over $\C$, $V^*$ its dual space,
$D$ an affine variety over $\C$ and $\C[D]$ its ring of regular functions.
Let $V[D]=V\otimes\C[D]$. For any $\lambda\in D$, the evaluation at $\lambda$ homomorphism $\C[D]\rightarrow\C$
gives rise to a linear map $V[D]\rightarrow V$  which we denote by $\mu\mapsto\mu(\lambda)$.

A regular family $\Xi$ of linear systems of equations on $V^*$ is a family
of elements $\mu_i\in V[D]$, $\nu_i\in\C[D]$, $i\in I$.
For each $\lambda\in D$ it gives rise to a linear system of equations $\Xi(\lambda)$ on $v^*\in V^*$ given by
\[
\sprod{v^*}{\mu_i(\lambda)}=\nu_i(\lambda),\ \ i\in I.
\]

\begin{theorem} \label{thm: algcont}
In the above setup, suppose that $V$ has countable dimension and $D$ is irreducible.
Let $\C(D)$ be the field of fractions of $\C[D]$.
Suppose that we are given a regular family of linear systems $\Xi$ of equations on $V^*$ as above.
Assume that there exists a non-empty, open (in the Hausdorff topology) subset
$D'$ of $D$ such that for all $\lambda\in D'$ the system $\Xi(\lambda)$ has a unique solution.
Then, there exists a unique element
\[
A\in\Hom_{\C[D]}(V[D],\C(D))=\Hom_{\C}(V,\C(D))
\]
such that $A(\mu_i)=\nu_i$ for all $i\in I$. Moreover, for all $\lambda\in D$ outside the union of countably many hypersurfaces,
$Av\in\C(D)$ is regular in $\lambda$ for all $v\in V$ and $(v\mapsto Av(\lambda))\in V^*$ is the unique
solution of $\Xi(\lambda)$.
\end{theorem}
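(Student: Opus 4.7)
The plan is to carry out the argument over the fraction field $\C(D)$. Set $W = V \otimes_\C \C(D)$; by tensor--hom adjunction, $\C(D)$-linear maps $W \to \C(D)$ correspond bijectively to $\C[D]$-linear maps $V[D] \to \C(D)$, and hence to $\C$-linear maps $V \to \C(D)$. The strategy is to produce a unique $\C(D)$-linear $A : W \to \C(D)$ with $A(\mu_i) = \nu_i$ for all $i \in I$, and then specialize at points of $D$. The sole technical input is a Baire-category observation: since $D$ is irreducible, every non-empty Hausdorff-open subset of $D$ is a Baire space, so a countable union of proper Zariski-closed subsets of $D$ has empty interior, and in particular cannot cover $D'$.

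First I would show that the $\mu_i$ span $W$ as a $\C(D)$-vector space. If not, there is a non-zero $\C(D)$-linear functional $B : W \to \C(D)$ vanishing on every $\mu_i$. Choose a countable basis $(v_k)_k$ of $V$ and set $b_k = B(v_k) \in \C(D)$; by hypothesis some $b_{k_0} \neq 0$. The points where some $b_k$ has a pole or where $b_{k_0}$ vanishes are contained in a countable union of proper Zariski-closed subsets of $D$, so by Baire one may pick $\lambda \in D'$ outside this locus. Then $v \mapsto B(v)(\lambda)$ is a non-zero element of $V^*$ annihilating every $\mu_i(\lambda)$, and adding it to the assumed unique solution $v^*(\lambda)$ of $\Xi(\lambda)$ produces a second solution -- a contradiction.

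Next I would verify the compatibility: whenever $\sum_{i \in I_0} c_i \mu_i = 0$ in $W$ for a finite $I_0 \subset I$ and $c_i \in \C(D)$, then $\sum_i c_i \nu_i = 0$ in $\C(D)$. For any $\lambda \in D'$ outside the poles of all $c_i$,
\[
\sum_i c_i(\lambda) \nu_i(\lambda) = \sprod{v^*(\lambda)}{\sum_i c_i(\lambda) \mu_i(\lambda)} = 0,
\]
so the rational function $\sum_i c_i \nu_i$ vanishes on a Hausdorff-dense subset of $D$ and hence identically. Combined with the spanning step, this produces the unique $\C(D)$-linear $A : W \to \C(D)$ with $A(\mu_i) = \nu_i$, which is the desired element.

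For the specialization statement, express each basis vector $v_k$ of $V$ as a finite combination $v_k = \sum_{i \in I_k} c_i^{(k)} \mu_i$ with $c_i^{(k)} \in \C(D)$ (possible by spanning) and let $\Omega \subset D$ be the complement of the union of the pole divisors of all $A(v_k)$ and all $c_i^{(k)}$, a complement of countably many hypersurfaces. For $\lambda \in \Omega$, evaluating $A(\mu_i) = \nu_i$ at $\lambda$ shows that $v \mapsto (Av)(\lambda)$ is a solution of $\Xi(\lambda)$, while the specialized expansions $v_k = \sum_i c_i^{(k)}(\lambda) \mu_i(\lambda)$ show that the $\mu_i(\lambda)$ span $V$, forcing $\Xi(\lambda)$ to have at most one solution. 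The main subtle point is the initial Baire-category bridge between uniqueness on the Hausdorff-open set $D'$ and an algebraic spanning statement over $\C(D)$; this is precisely where the countable-dimension hypothesis on $V$ is used, ensuring that only countably many hypersurfaces arise.
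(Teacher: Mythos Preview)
Your argument is correct. The paper does not give its own proof of this theorem: it states the result as a known algebraic analogue of the analytic principle and refers to \cite{MR892097}*{p.~127} and \cite{MR1671189}*{\S1} for proofs. So there is no in-paper proof to compare against, only the cited literature.

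Your approach is essentially the standard one. The two key points --- that the $\mu_i$ span $V\otimes\C(D)$, and that any $\C(D)$-linear relation among the $\mu_i$ is satisfied by the $\nu_i$ --- are exactly what one finds in the references, and your use of the Baire property of $D'$ to transfer uniqueness on a Hausdorff-open set to these algebraic statements over $\C(D)$ is the expected bridge. The specialization argument is also handled correctly: once each basis vector $v_k$ is expressed over $\C(D)$ in terms of finitely many $\mu_i$, clearing denominators (equivalently, working in $V\otimes\mathcal{O}_{D,\lambda}$ at a point where the coefficients are regular) shows that the $\mu_i(\lambda)$ span $V$, giving uniqueness of the specialized solution. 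Your identification of the countable-dimension hypothesis as precisely what keeps the exceptional locus a \emph{countable} union of hypersurfaces is on the mark.

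One cosmetic remark: in the spanning step you invoke a nonzero $\C(D)$-linear functional $B$ on $W$ vanishing on the span of the $\mu_i$. Since $W$ need not have countable $\C(D)$-dimension a priori, it is slightly cleaner to simply choose a $v\in V$ outside the $\C(D)$-span of the $\mu_i$, pick a $\C(D)$-basis of that span, extend by $v$ to a basis of a larger subspace, and let $B$ be the coordinate functional dual to $v$ (extended by zero); this avoids any appeal to choice beyond the obvious and keeps the values $B(v_k)$ explicit. But this is a matter of presentation, not substance.
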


See the references above for the easy proof.

\subsection{Rationality of Eisenstein series} \label{sec: FFEis}
Let $\varphi\in\AF_P$.
Using Theorem \ref{thm: algcont} we prove that the Eisenstein series $E(\varphi,\lambda)$, originally defined
in the region $\sprod{\Re\lambda}{\alpha^\vee}\gg0$ $\forall\alpha\in\srts_P$ is a rational functions on $\lambda\in \chars_P$.

Fix a compact open subgroup $K$ of $G(\A)$ such that $\varphi$ is right $K$-invariant.
Consider the space $V=\rapid(\autspace)^K$ and its dual space $V^*$ of all right $K$-invariant functions on $\autspace$.
The system $\Xi(\lambda)$, $\lambda\in \chars_P$ consisting of the linear equations \eqref{eq: nonhom2}
is a regular family of linear systems on $V^*$ (since $\tilde\AF_Q^{\cusp}\subset\rapid(\autspace_Q)$).
By Proposition \ref{prop: Ximain2} and Theorem \ref{thm: algcont} we deduce that for any $g\in G(\A)$ the function
$E(g,\varphi,\lambda)$ is a rational function on $\chars_P$.
(Note that we do not need to use the equations \eqref{eq: HCeis} or \S\ref{sec: LF},
the local finiteness part.)

We claim that in fact there exists a polynomial $p$ on $\chars_P$ such that
$p(\lambda)E(g,\varphi,\lambda)$ is a polynomial on $\chars_P$ for all $g\in G(\A)$.

Indeed, by \eqref{eq: allw2}, we have
\[
(E(\varphi,\lambda),\theta_Q\phi)_{\autspace}=0
\]
for any $Q\in\stds$,
\[
\phi\in\prod_{w\in\Weyl^{\supset Q}(P)}\diff^{Q,w(\underline{\mu}_{P_w}^{\cusp}+\tilde\lambda)}_{\underline{a}_w}(\tilde\AF_Q^{\cusp})
\]
and any collection $\underline{a}_w\in\AAA_Q^{m_{P_w}}$, $w\in\Weyl^{\supset Q}(P)$.
Hence, by Corollary \ref{cor: commonU}, there exists a finite-dimensional subspace $U$ of $V$
such that if $p$ is a polynomial on $\chars_P$ such that
$p(\lambda)(E(\varphi,\lambda),\phi)_{\autspace}$ is a polynomial on $\chars_P$
for all $\phi\in U$, then $p(\lambda)(E(\varphi,\lambda),\phi)_{\autspace}$
is a polynomial on $\chars_P$ for all $\phi\in V$.

\subsection{Rationality of intertwining operators}
The proof of the rationality of $\lambda\in\chars_P\mapsto M(w,\lambda)\varphi$ for any $\varphi\in\AF_P$, $w\in\Weyl(P,Q)$
is identical to the number field case, taking into account
that the operators $D(\lambda)$ defined in \S\ref{sec: MIO} are polynomial functions in $\lambda\in\chars_P$
(which factor through $\charsA_P$).

Similarly, the functional equation $E(M(w,\lambda)\varphi,w\lambda)=E(\varphi,\lambda)$ for any $w\in\Weyl(P,P')$
and $M(w'w,\lambda)=M(w',w\lambda)\circ M(w,\lambda)$ for any $w\in\Weyl(P,P')$ and $w'\in\Weyl(P',P'')$
are proved exactly as in the number field case.

Finally, we consider the singularities of $M(w,\lambda)$.
For any $\alpha\in\Phi_P$ let $\alpha^*\in M(\A)/M(\A)^1$ be the element defined in \cite{MR1361168}*{I.1.11}.
By definition, a root hyperplane of $\chars_P$ is a hypersurface given by the equation $\alpha^{*\lambda}=c$
for some $\alpha\in\Phi_P$ and $c\in\C^*$.
We say that a rational function on $\chars_P$ has singularities along root hyperplanes if
it is regular outside a finite union of root hyperplanes.
We show that for any $\varphi\in\AF_P$, the intertwining operator $M(w,\lambda)\varphi$ has singularities along root hyperplanes
(and in fact a more precise statement).
If $w$ is an elementary symmetry $s_\alpha$ for some $\alpha\in\srts_P$,
then as a function of $\lambda\in\chars_P$, $M(w,\lambda)\varphi$ depends only on $\alpha^{*\lambda}$.
In general, by decomposing $w$ into elementary symmetries \cite{MR1361168}*{I.1.8} and using the multiplicativity of
intertwining operators, it follows that
the singularities of $M(w,\lambda)$ are of the form $\beta^{*\lambda}=c$ for some
$\beta\in\Phi_P$ such that $w\beta<0$ and $c\in\C^*$.

Finally, as in the number field case, it follows from \cite{MR1361168}*{I.4.10} and \eqref{eq: consterm} that the singularities of
$E(\varphi,\lambda)$ are also along root hyperplanes.

This finishes the proof of Theorem \ref{thm: main}.

\appendix

\section{Proof of Principle of Meromorphic Continuation (Theorem \ref{thm: anal})}
\begin{comment}
We may write $\Xi(s)$ equivalently as
\[
\sprod{\nu}{[\mu_i(s)]v}=\sprod{\nu}{c_i(s)},\ i\in I,\ \nu\in \spc_i'.
\]
Thus, we may assume without loss of generality that $\spc_i=\C$ for all $i\in I$.
Hence, $\mu_i(s)$, $s\in\mnfld$ is an analytic family of continuous linear functionals on $\spc$
(with respect to the weak topology on $\spc'$) and $c_i$ is a scalar-valued analytic function on $\mnfld$.
\end{comment}

We will show that for every $s_0\in\overline{\mnfld_{\unq}^\circ}$ there exists an open, connected neighborhood $W$ of $s_0$ in $\mnfld$
with the following property. For every $s_1\in\mnfld_{\unq}\cap W$ there exists a scalar-valued holomorphic function $f$ on $W$ such that
\begin{enumerate}
\item $f(s_1)\ne0$.
\item $\mnfld_{\unq}\supset W_f:=\{s\in W\mid f(s)\ne0\}$.
\item There exists a holomorphic function $u:W\rightarrow \spc$ such that $u(s)=f(s)v(s)$ for all $s\in W_f$.
%\item $f(s)v(s)$ can be extended to a holomorphic function on $W$.
\end{enumerate}
This implies that $\overline{\mnfld_{\unq}^\circ}$ is open and hence, since $\mnfld$ is connected
and $\mnfld_{\unq}^\circ$ is nonempty by assumption, $\overline{\mnfld_{\unq}^\circ}=\mnfld$.
Thus, the theorem would follow from the statement above.

Since the statement is local in $s_0$ we may assume, by passing to a neighborhood of $s_0$,
that $\mnfld$ is connected, $\mnfld_{\unq}^\circ\ne\emptyset$, and there exist a finite-dimensional vector space $L$
and an analytic family of injective operators $\lambda_s:L\rightarrow\spc$, $s\in\mnfld$
such that $\Sol(\Xi(s))\subset \Img\lambda_s$ for all $s\in\mnfld$.
%We may assume of course that $W$ is connected.
%In this case, $W_f$ is connected for any non-zero holomorphic function $f$ on $W$.

By considering the pullback of $\Xi$ under $\lambda_s$ we may assume without loss of generality that $\spc=L$ is finite-dimensional.
We may think of $\Xi$ as a (possibly infinite) system of linear equations in $n$ variables (where $n=\dim L$) whose coefficients
depend analytically on $s$.

%By assumption, $W\cap \mnfld_{\unq}^\circ\ne\emptyset$.
Let $s_1\in\mnfld_{\unq}$, so that $\Xi(s_1)$ admits a unique solution $v(s_1)$.
Then, we can extract from $\Xi$ a nonsingular subsystem $\hat\Xi$
consisting of $n$ equations such that $\hat\Xi(s_1)$ admits $v(s_1)$ as its unique solution.
Let $f(s)$ be the determinant of the coefficients of the system $\hat\Xi(s)$, $s\in \mnfld$.
Then, $s_1\in \mnfld_f=\{s\in\mnfld\mid f(s)\ne0\}$, and if $s\in \mnfld_f$, then $\Sol(\hat\Xi(s))$ is a singleton which we write as $\{\hat v(s)\}$.
A fortiori, $\Sol(\Xi(s))\subset\{\hat v(s)\}$ for all $s\in \mnfld_f$.
Thus, $v(s)=\hat v(s)$ for all $s\in \mnfld_f\cap \mnfld_{\unq}$.
Moreover, by Cramer's rule $f(s)\hat v(s)$ extends to a holomorphic function on $\mnfld$,
and in particular $\hat v(s)$ is holomorphic on $\mnfld_f$.
Observe that $\mnfld_f\cap \mnfld_{\unq}^\circ$ is a nonempty open set since $\mnfld_f$ is dense in $\mnfld$ and $\mnfld_{\unq}^\circ\ne\emptyset$.
Since $\hat v(s)$ solves $\Xi(s)$ on $\mnfld_f\cap \mnfld_{\unq}$ and $\mnfld_f$
is connected (as a complement of a hypersurface) we infer by analytic continuation that $\hat v(s)\in\Sol(\Xi(s))$
for all $s\in \mnfld_f$. Thus, $\Sol(\Xi(s))=\{\hat v(s)\}$ for all $s\in \mnfld_f$.
It follows that $\mnfld_{\unq}\supset \mnfld_f$ and $v(s)=\hat v(s)$ for any $s\in \mnfld_f$.
Our claim follows.
$\qed$

\begin{remark}
Let $\spc$ be a \lctvs.
We say that a family $A_s$, $s\in\mnfld$ of subsets of $\spc$ is \emph{weakly} of finite type
if there exist a finite-dimensional vector space
$L$ and an analytic family $\lambda_s$, $s\in\mnfld$ of operators $L\rightarrow\spc$ such that
$A_s\subset\Img\lambda_s$ for all $s\in\mnfld$. (We do not require that $\lambda_s$ are injective.)
We can similarly define the corresponding local notion.
Suppose that $\Xi(s)$ is an analytic system of linear equations on $\spc$ which is locally, weakly of finite type
in the sense that the set of solutions of $\Xi(s)$ is locally, weakly of finite type.

Then, a similar argument to the above shows that for every $s_0\in\overline{\mnfld_{\unq}^\circ}$ there exists an open, connected neighborhood $W$ of $s_0$ in $\mnfld$
and a non-zero holomorphic function $f_1$ on $W$ with the following property.
For every $s_1\in\mnfld_{\unq}\cap W_{f_1}$ there exists a holomorphic function $f$ on $W$ such that
\begin{enumerate}
\item $f(s_1)\ne0$.
\item $\mnfld_{\unq}\supset W_f$.
\item There exists a holomorphic function $u:W\rightarrow \spc$ such that $u(s)=f(s)v(s)$ for all $s\in W_f$.
%\item $f(s)v(s)$ can be extended to a holomorphic function on $W$.
\end{enumerate}

This implies that $\mnfld_{\unq}$ contains an open dense subset $U$ of $\mnfld$ such that $v$ is holomorphic on $U$
and meromorphic on $\mnfld$. However, we do not know whether in this generality $\mnfld_{\unq}$ is open
and $v$ is holomorphic on $\mnfld_{\unq}$.

In fact, in a previous version of the paper we worked with this weaker notion of finite type.
However, we realized that it is better to work with the stronger notion.
\end{remark}

\printindex

%\bibliographystyle{amsalpha}
%\bibliography{../Bibfiles/all}
%\end{document}

\def\cprime{$'$} 
% \bib, bibdiv, biblist are defined by the amsrefs package.
\begin{bibdiv}
\begin{biblist}

\bib{MR1865513}{book}{
      author={Arveson, William},
       title={A short course on spectral theory},
      series={Graduate Texts in Mathematics},
   publisher={Springer-Verlag, New York},
        date={2002},
      volume={209},
        ISBN={0-387-95300-0},
         url={https://doi.org/10.1007/b97227},
      review={\MR{1865513}},
}

\bib{MR1671189}{article}{
      author={Banks, William~D.},
       title={A corollary to {B}ernstein's theorem and {W}hittaker functionals
  on the metaplectic group},
        date={1998},
        ISSN={1073-2780},
     journal={Math. Res. Lett.},
      volume={5},
      number={6},
       pages={781\ndash 790},
         url={http://dx.doi.org/10.4310/MRL.1998.v5.n6.a7},
      review={\MR{1671189 (99m:11059)}},
}

\bib{MR0579172}{article}{
      author={Bernstein, I.~N.},
      author={Zelevinsky, A.~V.},
       title={Induced representations of reductive {${\germ p}$}-adic groups.
  {I}},
        date={1977},
        ISSN={0012-9593},
     journal={Ann. Sci. \'Ecole Norm. Sup. (4)},
      volume={10},
      number={4},
       pages={441\ndash 472},
      review={\MR{0579172 (58 \#28310)}},
}

\bib{MR3219530}{article}{
      author={Bernstein, Joseph},
      author={Kr\"{o}tz, Bernhard},
       title={Smooth {F}r\'{e}chet globalizations of {H}arish-{C}handra
  modules},
        date={2014},
        ISSN={0021-2172},
     journal={Israel J. Math.},
      volume={199},
      number={1},
       pages={45\ndash 111},
         url={https://doi.org/10.1007/s11856-013-0056-1},
      review={\MR{3219530}},
}

\bib{MR1075727}{article}{
      author={Bernstein, Joseph~N.},
       title={On the support of {P}lancherel measure},
        date={1988},
        ISSN={0393-0440},
     journal={J. Geom. Phys.},
      volume={5},
      number={4},
       pages={663\ndash 710 (1989)},
         url={https://doi.org/10.1016/0393-0440(88)90024-1},
      review={\MR{1075727}},
}

\bib{MR2357988}{article}{
      author={Bonet, Jos\'{e}},
      author={Frerick, Leonhard},
      author={Jord\'{a}, Enrique},
       title={Extension of vector-valued holomorphic and harmonic functions},
        date={2007},
        ISSN={0039-3223},
     journal={Studia Math.},
      volume={183},
      number={3},
       pages={225\ndash 248},
         url={https://doi.org/10.4064/sm183-3-2},
      review={\MR{2357988}},
}

\bib{MR0244260}{book}{
      author={Borel, Armand},
       title={Introduction aux groupes arithm\'etiques},
      series={Publications de l'Institut de Math\'ematique de l'Universit\'e de
  Strasbourg, XV. Actualit\'es Scientifiques et Industrielles, No. 1341},
   publisher={Hermann},
     address={Paris},
        date={1969},
      review={\MR{MR0244260 (39 \#5577)}},
}

\bib{MR1482800}{book}{
      author={Borel, Armand},
       title={Automorphic forms on {${\rm SL}\sb 2({\bf R})$}},
      series={Cambridge Tracts in Mathematics},
   publisher={Cambridge University Press},
     address={Cambridge},
        date={1997},
      volume={130},
        ISBN={0-521-58049-8},
      review={\MR{MR1482800 (98j:11028)}},
}

\bib{MR910295}{book}{
      author={Bourbaki, N.},
       title={Topological vector spaces. {C}hapters 1--5},
      series={Elements of Mathematics (Berlin)},
   publisher={Springer-Verlag},
     address={Berlin},
        date={1987},
        ISBN={3-540-13627-4},
        note={Translated from the French by H. G. Eggleston and S. Madan},
      review={\MR{MR910295 (88g:46002)}},
}

\bib{CoSar}{article}{
      author={Cohen, Paul},
      author={Sarnak, Peter},
       title={Notes on the {S}elberg trace formula},
        date={1980},
        note={Available at \url{http://publications.ias.edu/sarnak}},
}

\bib{MR639175}{article}{
      author={Colin~de Verdi\`ere, Yves},
       title={Une nouvelle d\'{e}monstration du prolongement m\'{e}romorphe des
  s\'{e}ries d'{E}isenstein},
        date={1981},
        ISSN={0249-6321},
     journal={C. R. Acad. Sci. Paris S\'{e}r. I Math.},
      volume={293},
      number={7},
       pages={361\ndash 363},
      review={\MR{639175}},
}

\bib{2006.12893}{article}{
      author={Delorme, Patrick},
       title={On the spectral theorem of {L}anglands},
        note={arXiv:2006.12893},
}

\bib{MR1603257}{article}{
      author={Franke, Jens},
       title={Harmonic analysis in weighted {$L_2$}-spaces},
        date={1998},
        ISSN={0012-9593},
     journal={Ann. Sci. \'Ecole Norm. Sup. (4)},
      volume={31},
      number={2},
       pages={181\ndash 279},
         url={http://dx.doi.org/10.1016/S0012-9593(98)80015-3},
      review={\MR{1603257 (2000f:11065)}},
}

\bib{MR892097}{book}{
      author={Gelbart, Stephen},
      author={Piatetski-Shapiro, Ilya},
      author={Rallis, Stephen},
       title={Explicit constructions of automorphic {$L$}-functions},
      series={Lecture Notes in Mathematics},
   publisher={Springer-Verlag},
     address={Berlin},
        date={1987},
      volume={1254},
        ISBN={3-540-17848-1},
      review={\MR{MR892097 (89k:11038)}},
}

\bib{MR0372565}{book}{
      author={Grothendieck, A.},
       title={Topological vector spaces},
   publisher={Gordon and Breach Science Publishers, New York-London-Paris},
        date={1973},
        note={Translated from the French by Orlando Chaljub, Notes on
  Mathematics and its Applications},
      review={\MR{0372565 (51 \#8772)}},
}

\bib{MR0058865}{article}{
      author={Grothendieck, Alexandre},
       title={Sur certains espaces de fonctions holomorphes. {I}},
        date={1953},
        ISSN={0075-4102},
     journal={J. Reine Angew. Math.},
      volume={192},
       pages={35\ndash 64},
      review={\MR{0058865 (15,438a)}},
}

\bib{MR0219666}{article}{
      author={Harish-Chandra},
       title={Discrete series for semisimple {L}ie groups. {II}. {E}xplicit
  determination of the characters},
        date={1966},
        ISSN={0001-5962},
     journal={Acta Math.},
      volume={116},
       pages={1\ndash 111},
      review={\MR{0219666 (36 \#2745)}},
}

\bib{MR0232893}{book}{
      author={Harish-Chandra},
       title={Automorphic forms on semisimple {L}ie groups},
      series={Notes by J. G. M. Mars. Lecture Notes in Mathematics, No. 62},
   publisher={Springer-Verlag, Berlin-New York},
        date={1968},
      review={\MR{0232893 (38 \#1216)}},
}

\bib{MR1846625}{book}{
      author={Krantz, Steven~G.},
       title={Function theory of several complex variables},
   publisher={AMS Chelsea Publishing, Providence, RI},
        date={2001},
        ISBN={0-8218-2724-3},
         url={https://doi.org/10.1090/chel/340},
        note={Reprint of the 1992 edition},
      review={\MR{1846625}},
}

\bib{MR3026269}{book}{
      author={Labesse, Jean-Pierre},
      author={Waldspurger, Jean-Loup},
       title={La formule des traces tordue d'apr\`es le {F}riday {M}orning
  {S}eminar},
      series={CRM Monograph Series},
   publisher={American Mathematical Society, Providence, RI},
        date={2013},
      volume={31},
        ISBN={978-0-8218-9441-5},
        note={With a foreword by Robert Langlands [dual English/French text]},
      review={\MR{3026269}},
}

\bib{MR0249539}{incollection}{
      author={Langlands, R.~P.},
       title={Eisenstein series},
        date={1966},
   booktitle={Algebraic {G}roups and {D}iscontinuous {S}ubgroups ({P}roc.
  {S}ympos. {P}ure {M}ath., {B}oulder, {C}olo., 1965)},
   publisher={Amer. Math. Soc.},
     address={Providence, R.I.},
       pages={235\ndash 252},
      review={\MR{0249539 (40 \#2784)}},
}

\bib{MR993313}{incollection}{
      author={Langlands, R.~P.},
       title={Eisenstein series, the trace formula, and the modern theory of
  automorphic forms},
        date={1989},
   booktitle={Number theory, trace formulas and discrete groups ({O}slo,
  1987)},
   publisher={Academic Press},
     address={Boston, MA},
       pages={125\ndash 155},
      review={\MR{993313 (90e:11077)}},
}

\bib{MR0579181}{book}{
      author={Langlands, Robert~P.},
       title={On the functional equations satisfied by {E}isenstein series},
      series={Lecture Notes in Mathematics, Vol. 544},
   publisher={Springer-Verlag},
     address={Berlin},
        date={1976},
      review={\MR{0579181 (58 \#28319)}},
}

\bib{MR2402686}{incollection}{
      author={Lapid, Erez~M.},
       title={A remark on {E}isenstein series},
        date={2008},
   booktitle={Eisenstein series and applications},
      series={Progr. Math.},
      volume={258},
   publisher={Birkh\"auser Boston, Boston, MA},
       pages={239\ndash 249},
         url={http://dx.doi.org/10.1007/978-0-8176-4639-4_8},
      review={\MR{2402686}},
}

\bib{MR1037774}{book}{
      author={Lax, Peter~D.},
      author={Phillips, Ralph~S.},
       title={Scattering theory},
     edition={Second},
      series={Pure and Applied Mathematics},
   publisher={Academic Press, Inc., Boston, MA},
        date={1989},
      volume={26},
        ISBN={0-12-440051-5},
        note={With appendices by Cathleen S. Morawetz and Georg Schmidt},
      review={\MR{1037774}},
}

\bib{MR31519}{article}{
      author={Maass, Hans},
       title={\"{U}ber eine neue {A}rt von nichtanalytischen automorphen
  {F}unktionen und die {B}estimmung {D}irichletscher {R}eihen durch
  {F}unktionalgleichungen},
        date={1949},
        ISSN={0025-5831},
     journal={Math. Ann.},
      volume={121},
       pages={141\ndash 183},
         url={https://doi.org/10.1007/BF01329622},
      review={\MR{31519}},
}

\bib{MR1361168}{book}{
      author={M\oe~glin, C.},
      author={Waldspurger, J.-L.},
       title={Spectral decomposition and {E}isenstein series},
      series={Cambridge Tracts in Mathematics},
   publisher={Cambridge University Press, Cambridge},
        date={1995},
      volume={113},
        ISBN={0-521-41893-3},
         url={https://doi.org/10.1017/CBO9780511470905},
        note={Une paraphrase de l'\'{E}criture [A paraphrase of Scripture]},
      review={\MR{1361168}},
}

\bib{MR1025165}{article}{
      author={M\"uller, Werner},
       title={The trace class conjecture in the theory of automorphic forms},
        date={1989},
        ISSN={0003-486X},
     journal={Ann. of Math. (2)},
      volume={130},
      number={3},
       pages={473\ndash 529},
         url={https://doi.org/10.2307/1971453},
      review={\MR{1025165}},
}

\bib{MR0081967}{article}{
      author={Roelcke, Walter},
       title={\"{U}ber die {W}ellengleichung bei {G}renzkreisgruppen erster
  {A}rt},
        date={1953/1955},
     journal={S.-B. Heidelberger Akad. Wiss. Math.-Nat. Kl.},
      volume={1953/1955},
       pages={159\ndash 267 (1956)},
      review={\MR{0081967}},
}

\bib{MR0088511}{article}{
      author={Selberg, A.},
       title={Harmonic analysis and discontinuous groups in weakly symmetric
  {R}iemannian spaces with applications to {D}irichlet series},
        date={1956},
     journal={J. Indian Math. Soc. (N.S.)},
      volume={20},
       pages={47\ndash 87},
      review={\MR{MR0088511 (19,531g)}},
}

\bib{MR0176097}{incollection}{
      author={Selberg, Atle},
       title={Discontinuous groups and harmonic analysis},
        date={1963},
   booktitle={Proc. {I}nternat. {C}ongr. {M}athematicians ({S}tockholm, 1962)},
   publisher={Inst. Mittag-Leffler},
     address={Djursholm},
       pages={177\ndash 189},
      review={\MR{MR0176097 (31 \#372)}},
}

\bib{MR1008186}{article}{
      author={Wong, Shek-Tung},
       title={The meromorphic continuation and functional equations of cuspidal
  {E}isenstein series for maximal cuspidal groups},
        date={1990},
        ISSN={0065-9266},
     journal={Mem. Amer. Math. Soc.},
      volume={83},
      number={423},
       pages={x+210},
         url={https://doi.org/10.1090/memo/0423},
      review={\MR{1008186}},
}

\end{biblist}
\end{bibdiv}

\end{document}